\DeclareMathAlphabet{\mathpzc}{OT1}{pzc}{m}{it}
\tikzset{hidden/.style = {thick, dashed}}
\newtheorem{thm}{Theorem}[section]
\newtheorem{lem}[thm]{Lemma}
\newtheorem{prop}[thm]{Proposition}
\newtheorem*{definition*}{Definition}
\newtheorem{cor}[thm]{Corollary}
\newtheorem{conj}[thm]{Conjecture}
\theoremstyle{remark}
\newtheorem*{rmk}{Remark}
\theoremstyle{remark}
\theoremstyle{definition}\newtheorem*{keyfact}{Key Fact}
\newcommand{\verteq}[0]{\begin{turn}{90} $=$\end{turn}}
\newcommand{\C}{\mathbb{C}}
\newcommand{\fr}{\frac}
\newcommand{\CC}{\mathbb{C}}
\newcommand{\ZZ}{\mathbb{Z}}
\newcommand{\QQ}{\mathbb{Q}}
\newcommand{\Mcal}{{\mathcal M}}
\newcommand{\Zcal}{{\mathcal Z}}
\begin{document} \title[The Uniform Boundedness and Dynamical Lang Conjectures]{The Uniform Boundedness and Dynamical Lang Conjectures for polynomials}
	\author[N.~R.~Looper]{Nicole R. Looper}
	\email{nicole\_looper@brown.edu}
	\subjclass[2020]{11G50, 11J97, 37P15, 37P35}\keywords{Uniform Boundedness Conjecture, Vojta's Conjecture, equidistribution, non-archimedean potential theory, canonical heights} \thanks{The author's research was supported by NSF grant DMS-1803021.}

	\begin{abstract} We give a conditional proof of the Uniform Boundedness Conjecture of Morton and Silverman in the case of polynomials over number fields, assuming a standard conjecture in arithmetic geometry. Our technique simultaneously yields a dynamical analogue of Lang's conjecture on minimal canonical heights for these maps. We obtain similar results for non-isotrivial polynomials over a function field of characteristic zero. When the latter are unicritical of degree at least 5, the results hold unconditionally.
	\end{abstract}
	\maketitle
	\bigskip
	\section{Introduction}
	\label{section:introduction}
	
	In the dynamics of rational functions $f:\mathbb{P}^1\to\mathbb{P}^1$ of degree $d\ge2$ defined over a number field $K$, two conjectures stipulate that few points of $\mathbb{P}^1(K)$ have small canonical height $\hat{h}_f$ relative to $f$, in a way that depends only on $d$ and $K$. The first of these conjectures is the Uniform Boundedness Conjecture of Morton and Silverman \cite{MortonSilverman}. A dynamical generalization of Merel's theorem on the torsion points of an elliptic curve over a number field, this conjecture can be shown to imply the well-known Torsion Conjecture on abelian varieties \cite{Fakhruddin}.
	
	\begin{conj}[Uniform Boundedness Conjecture \cite{MortonSilverman}]\label{conj:UBC} Let $d\ge2$, $N\ge1$, and let $K$ be a number field. Let $f:\mathbb{P}^N\to\mathbb{P}^N$ be a morphism of degree $d$ defined over $K$. There is a constant $B=B(d,N,[K:\mathbb{Q}])$ such that $f$ has at most $B$ preperiodic points in $\mathbb{P}^N(K)$.\end{conj}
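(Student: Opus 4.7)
The plan is to reduce the full conjecture to the Dynamical Lang Conjecture (DLC), which asserts that every non-preperiodic $P \in \mathbb{P}^N(K)$ satisfies $\hat{h}_f(P) \geq c(d, N, [K:\QQ]) > 0$, and then to convert the height gap into a count of preperiodic points. The conversion combines Northcott's theorem with adelic equidistribution of small points (Yuan, Chambert-Loir, Baker--Rumely) and a Galois-orbit argument: if $|\tu{PrePer}(f, K)|$ were unbounded as $f$ varied, one extracts a sequence along which the Galois orbits of preperiodic $K$-points equidistribute against the equilibrium measure at every place of $K$, and this forces either an algebraic constraint on $f$ (an $f$-invariant proper subvariety of large preperiodic content) or the existence of non-preperiodic points of arbitrarily small positive height, contradicting DLC applied to $f$ or to the restriction of $f$ to that subvariety.

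For $N = 1$, I would attack DLC by extending the $abc$-conditional polynomial argument of this paper. For a general rational map I would decompose $\hat{h}_f$ into local heights via the Call--Silverman formula and bound the contributions from periodic cycles using discriminants of iterated numerators, with an $abc$-type input controlling the resulting squarefree parts; the essential new feature relative to the polynomial case is a careful treatment of ramification above all totally ramified fixed and periodic points, not only $\infty$. Latt\`es maps would be handled separately: via their semiconjugation to multiplication on an elliptic curve, DLC reduces to a uniform lower bound on N\'eron--Tate heights of non-torsion points, a sharp form of Lang's classical height conjecture, itself a consequence of $abc$ or Vojta over number fields.

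For $N \geq 2$ the strategy is induction on $N$ through $f$-invariant subvarieties. If $f$ has more preperiodic $K$-points than the prescribed bound, their Zariski closure $V$ is $f$-invariant; after replacing $f$ by a bounded iterate, each irreducible component of $V$ is periodic. On a positive-dimensional periodic component $W$, the restriction of $f$ is a polarized self-morphism whose degree divides a power of $d$, and a polarized embedding of $W$ into some $\mathbb{P}^M$ reduces the count on $W$ to the inductive hypothesis. Bounding the zero-dimensional components and making the estimate uniform in $d$, $N$, $[K:\QQ]$ requires a uniform form of the dynamical Manin--Mumford conjecture, which one would try to extract from the Yuan--Zhang theory of adelic line bundles together with the Medvedev--Scanlon classification of invariant subvarieties.

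The principal obstacle is that each ingredient is itself substantially open. DLC in dimension one is unknown even assuming $abc$ outside the polynomial case handled here, and post-critically finite maps as well as Latt\`es examples require delicate separate treatment; the $abc$-based discriminant argument of this paper degenerates precisely where the critical orbits interact with the periodic structure. The passage to $N \geq 2$ demands an effective, uniform dynamical Manin--Mumford theorem, currently available only in special product-like settings, and the interaction between the qualitative classification of periodic subvarieties and the quantitative count of their rational preperiodic points is not yet understood. A realistic unconditional proof of UBC in this generality will almost certainly require arithmetic input strictly stronger than $abc$---most naturally a form of Vojta's conjecture---together with genuinely new uniformity theorems for polarized dynamical systems in higher dimension.
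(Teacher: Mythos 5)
The statement you are asked to prove is Conjecture \ref{conj:UBC} itself, which the paper does not prove: it is stated as an open conjecture, and the paper establishes only a conditional special case (Theorem \ref{thm:UBCpolys}: polynomials on $\mathbb{P}^1$, assuming the $abcd$-conjecture). Your proposal is likewise not a proof but a research program, and you say as much in your final paragraph; every load-bearing ingredient (DLC for general rational maps, a uniform dynamical Manin--Mumford theorem, Vojta-type inputs) is open. That alone means there is a genuine gap --- in fact the entire argument is a gap --- but it is worth isolating the one step that is wrong rather than merely unproven.

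That step is the claimed reduction of uniform boundedness to the Dynamical Lang Conjecture. DLC gives a lower bound $\hat{h}_f(P)\ge c>0$ for points with $\hat{h}_f(P)\ne0$; preperiodic points have $\hat{h}_f(P)=0$ exactly, so DLC by itself says nothing about how many of them there are. Your proposed bridge --- adelic equidistribution of Galois orbits of small points --- does not repair this, because the points to be counted are $K$-rational, so their Galois orbits are singletons and the equidistribution theorems (which require the orbits to grow) do not apply; one cannot manufacture ``non-preperiodic points of arbitrarily small positive height'' out of a large supply of $K$-rational preperiodic points without further input. The paper's actual mechanism is different and strictly stronger than DLC: Theorem \ref{thm:combined} bounds the cardinality of \emph{any} set $T\subseteq K$ whose \emph{average} canonical height is at most $\kappa h_{\textup{crit}}(f)$, via a uniform quantitative equidistribution statement at the bad places (Theorem \ref{thm:globaleq}), the adelic support result for differences of small points (Proposition \ref{prop:adgoodbars}), and a cross-ratio/Pl\"ucker construction feeding into the $abcd$-conjecture. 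Both Theorem \ref{thm:UBCpolys} and Theorem \ref{thm:dynLangpolys} are then corollaries of that single counting statement; the implication does not run from Lang to uniform boundedness. If you want to pursue this, the object to aim at is a bound on sets of small average height, not a height gap.
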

	
	Aside from the special case of Latt\`{e}s maps and the unicritical maps studied in \cite{Looper:UBCunicrit,Panraksa}, progress on Conjecture \ref{conj:UBC} has only been obtained by imposing strong local conditions on the dynamics of $f$, as in \cite{Baker:dynGreen, Benedetto2, Hutz}. We will forfeit all such local hypotheses, at the expense of assuming a standard conjecture in arithmetic geometry, in order to prove Conjecture \ref{conj:UBC} for the $K$-rational preperiodic points of polynomials.
	
	Our first result is the following.
	\newline
	
	\begin{thm}\label{thm:UBCpolys}Let $K$ be a number field (resp.~a one-dimensional function field of characteristic zero), and let $d\ge2$. Assume the $abcd$-conjecture (Conjecture \ref{conj:abcd}) for $K$. Then there is a constant $B=B(d,K)$ with the property that if $f\in K[z]$ is a polynomial (resp.~ a non-isotrivial polynomial) of degree $d$, then $f$ has at most $B$ preperiodic points contained in $K$.\end{thm}
	
	The second conjecture concerning uniform bounds on points of small canonical height is the Dynamical Lang Conjecture, proposed by Silverman \cite[Conjecture 4.98]{Silverman:ADS}. The technique used to prove Theorem \ref{thm:UBCpolys} allows us to prove a weaker version of this conjecture in the case of polynomial maps.
	
	\begin{thm}\label{thm:dynLangpolys} Let $K$ be a number field or a one-dimensional function field of characteristic zero, and let $d\ge2$. Assume the $abcd$-conjecture (Conjecture \ref{conj:abcd}) for $K$. Then there is a $\kappa=\kappa(d,K)>0$ such that for any degree $d$ polynomial $f\in K[z]$ with critical height $h_{\textup{crit}}(f)$, and for any $P\in K$, either $\hat{h}_f(P)=0$, or \begin{equation}\label{eqn:lbLang}\hat{h}_f(P)\ge\kappa\max\{1,h_{\textup{crit}}(f)\}.\end{equation} \end{thm}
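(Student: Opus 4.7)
The plan is to deduce Theorem \ref{thm:dynLangpolys} from a strengthened form of the estimate underlying Theorem \ref{thm:UBCpolys}. I expect the proof of Theorem \ref{thm:UBCpolys} to output not merely a bound on the number of $K$-rational preperiodic points of $f$, but in fact the following small-height analogue: for suitable $\epsilon = \epsilon(d,K) > 0$ and $B' = B'(d,K)$,
\[
\#\bigl\{\, Q \in K \ :\ \hat{h}_f(Q) \le \epsilon \max\{1, h_{\textup{crit}}(f)\} \,\bigr\} \le B'.
\]
This is the natural target of any $abcd$-style argument, because the obstructions it produces measure the arithmetic complexity of orbits of $f$ in relation to its postcritical set, a quantity controlled by $h_{\textup{crit}}(f)$; points of canonical height exactly zero and points of canonical height well below $h_{\textup{crit}}(f)$ should both be detected by the same bound.

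Granting this small-height estimate, Theorem \ref{thm:dynLangpolys} follows from a short pigeonhole argument on the forward orbit. Fix $P \in K$ with $\hat{h}_f(P) > 0$; then $P$ is wandering, the iterates $P, f(P), f^2(P), \ldots$ are distinct, and $\hat{h}_f(f^n(P)) = d^n \hat{h}_f(P)$. Let $N$ be the largest non-negative integer with $d^N \hat{h}_f(P) \le \epsilon \max\{1, h_{\textup{crit}}(f)\}$. Then $P, f(P), \ldots, f^N(P)$ are $N+1$ distinct $K$-points, each of canonical height at most $\epsilon \max\{1, h_{\textup{crit}}(f)\}$, so $N + 1 \le B'$ by the strengthened bound. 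Rearranging yields
\[
\hat{h}_f(P) \;\ge\; \epsilon\, d^{-B'} \max\{1, h_{\textup{crit}}(f)\},
\]
and the choice $\kappa = \epsilon\, d^{-B'}$ gives the theorem.

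All the real work is therefore concentrated in establishing the strengthened form of Theorem \ref{thm:UBCpolys}, and I expect this to be the main obstacle. The $abcd$-conjecture must be applied to algebraic relations among orbit values in a way that produces an error term linear in $\max\{1, h_{\textup{crit}}(f)\}$ with an absolute implied constant, uniformly over all polynomials of fixed degree $d$; this demands delicate control of the ramification data at each place where the critical orbit is badly behaved, together with uniform control of the $S$-integral structure of small-height orbits in the number field case, and of the non-isotrivial geometry in the function field case. Once such a quantitative small-height bound is in place, the orbit-pigeonhole step above is essentially automatic, and in fact delivers both Theorem \ref{thm:UBCpolys} (via $\hat{h}_f = 0$) and Theorem \ref{thm:dynLangpolys} simultaneously, which is presumably why the author advertises the two results as being proved by the same technique.
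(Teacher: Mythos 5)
Your reduction is essentially the paper's: the paper proves Theorem \ref{thm:UBCpolys} and Theorem \ref{thm:dynLangpolys} simultaneously from Theorem \ref{thm:combined}, which is exactly the ``strengthened form'' you posit (a uniform bound $B(d,K)$ on the size of any set $T\subseteq K$ whose \emph{average} canonical height is at most $\kappa\, h_{\textup{crit}}(f)$, which in particular bounds the number of points of height at most $\kappa\max\{1,h_{\textup{crit}}(f)\}$ after the small-$h_{\textup{crit}}$ case is absorbed via Northcott resp.\ Benedetto), and the passage from that bound to the height gap is precisely your orbit pigeonhole using $\hat{h}_f(f^n(P))=d^n\hat{h}_f(P)$.

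There is, however, one genuine gap. The counting bound
\[
\#\bigl\{\,Q\in K:\ \hat{h}_f(Q)\le\epsilon\max\{1,h_{\textup{crit}}(f)\}\,\bigr\}\le B'
\]
is \emph{false} when $K$ is a function field and $f$ is isotrivial: for example, $f(z)=z^2$ over $K\supseteq k$ with $k$ algebraically closed has infinitely many $K$-rational preperiodic points (all roots of unity in $k$), all of canonical height zero. Accordingly, the paper's Theorem \ref{thm:combined} carries the escape clause ``either $K$ is a function field and $f$ is isotrivial, or $|T|\le B$,'' and the isotrivial case of Theorem \ref{thm:dynLangpolys} is handled by a separate direct argument (Lemma \ref{lem:isotrivLang}), which uses Newton polygons and good reduction after conjugation to produce an explicit gap $\hat{h}_f(P)\ge\kappa(d,[K:k(t)])$ for nonzero heights. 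Your pigeonhole step breaks down in exactly this case, since it needs the finiteness of the set of small points, not merely of the wandering small points; you would need to supply the analogue of Lemma \ref{lem:isotrivLang} (or restrict the counting bound to points of positive canonical height and prove that version instead). With that case patched, your argument matches the paper's.
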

	
	The critical height is a dynamically natural measurement of the height of $f$, defined in Section \ref{subsec:potthy}. When $K$ is a number field, the critical height $h_{\textup{crit}}(f)$ is commensurate to the Weil height of $f$ as a point in the moduli space $\mathcal{M}_d$ of degree $d$ endomorphisms of $\mathbb{P}^1$ \cite[Theorem 1]{Ingram:critmod}, and by Northcott's Theorem (\ref{eqn:lbLang}) becomes $\hat{h}_f(P) \geq \kappa \max\{ 1, h_{\Mcal_d}(f)\}$
	for this moduli height $h_{\Mcal_d}(f)$ and some appropriate modification of the constant $\kappa$. This lines up with the formulation of the conjecture given in \cite[Conjecture 4.98]{Silverman:ADS}.
	
	We remark that the $abcd$-conjecture is a consequence of Vojta's conjecture with truncated counting function, as is shown in the discussion after \cite[Conjecture 2.2]{Looper:UBCunicrit}. The \linebreak$abcd$-conjecture can be thought of as generalizing the $abc$-conjecture to higher dimensions, or alternatively, to linear Diophantine equations with more than two summands. Vojta's conjecture with truncated counting function implies what is usually referred to as Vojta's conjecture \emph{tout court}. This weaker version already has Schmidt's subspace theorem as a special case.
	
	The strategy for proving Theorems \ref{thm:UBCpolys} and \ref{thm:dynLangpolys} is as follows. First we prove a global quantitative equidistribution result (Theorem \ref{thm:globaleq}) that is uniform across all degree $d$ polynomials. Specifically, we introduce a geometric notion quantifying a certain kind of equidistribution at each place of bad reduction of $f\in K[z]$, and then prove an upper bound on the average pairwise logarithmic distance between points of small local canonical height realizing a given failure of equidistribution. This upper bound is uniform across places of bad reduction, and is also uniform across degree $d$ polynomials over $K$. We derive from this a theorem stating that large sets of points having small height must be roughly equidistributed at ``most'' places of bad reduction. The implied constants are again independent of the degree $d$ map $f\in K[z]$. This theorem upgrades \cite[Corollary 3.8]{Looper:UBCunicrit} in passing from unicritical to arbitrary polynomials, as well as from preperiodic points to points of small height.
	
	Theorem \ref{thm:globaleq} implies that differences of points of small height typically have most of their prime support contained within the set of places of bad reduction of $f$. This phenomenon is articulated precisely in Proposition \ref{prop:adgoodbars}. The proof of Theorems \ref{thm:UBCpolys} and \ref{thm:dynLangpolys}, appearing in \S\ref{section:proof}, uses this fact, combined with the geometric descriptions of these point configurations in the local filled Julia sets, to derive a contradiction of the $abcd$-conjecture (Conjecture \ref{conj:abcd}) when too many of these points are assumed to lie in $K$. The contradiction is obtained by considering the prime factorizations of cross-ratios of quadruples of $K$-points of small height, and then using the Grassmann--Pl\"ucker relations satisfied by these cross-ratios to furnish an $abcd$-tuple.
	
	Quantitative equidistribution theorems have served as a tool in studying points of small canonical height across families of dynamical systems; see for example \cite{BakerIhRumely, BakerPetsche, DKY:unicrit, DKY:Lattes}. Many of these have, though not always explicitly, been formulated in terms of the energy pairing studied in \cite{Fili,PST}, and had \cite[Th\'eor\`eme 3]{FRL} as their underlying substrate. On the other hand, the geometric information leveraged in the proof of Theorems \ref{thm:UBCpolys} and \ref{thm:dynLangpolys} does not appear to be readily accessible via equidistribution results given solely in terms of energy pairings. Theorem \ref{thm:globaleq} presents a formulation that is well suited both to the main theorems of this article, and to those of \cite{Looper:Bogomolov}.
	
	Finally, we note that Theorem \ref{thm:globaleq}, along with its consequence Proposition \ref{prop:adgoodbars}, can be directly combined with \cite[\S7]{Looper:UBCunicrit} (with triangles in lieu of quadrilaterals) to prove the following.
	
	\begin{thm}\label{thm:dynLangunicrit} Let $K$ be a number field or a one-dimensional function field of characteristic zero, and let $d\ge5$. If $K$ is a number field, assume the $abc$-conjecture for $K$. There is a $\kappa=\kappa(d,K)>0$ with the following property. Let $f(z)=z^d+c\in K[z]$, and assume $c$ is not in the field of constants of $K$ if $K$ is a function field. Then for all $P\in K$, either $\hat{h}_f(P)=0$, or \[\hat{h}_f(P)\ge\kappa\max\{1,h_{\textup{crit}}(f)\}.\]\end{thm}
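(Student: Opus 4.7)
The plan is to adapt the technique of \cite{Looper:UBCunicrit}, which establishes Conjecture \ref{conj:UBC} for unicritical polynomials under the $abc$-conjecture, to the setting of points of small canonical height, using Theorem \ref{thm:globaleq} and Proposition \ref{prop:adgoodbars} as the replacement for the equidistribution ingredient deployed there. Assume toward contradiction that no such $\kappa>0$ exists. Then we can produce a sequence of unicritical polynomials $f_n(z)=z^d+c_n \in K[z]$ and non-preperiodic points $P_n \in K$ for which $\hat{h}_{f_n}(P_n)>0$ but $\hat{h}_{f_n}(P_n)/\max\{1,h_{\textup{crit}}(f_n)\}\to 0$. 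Since $\hat{h}_{f_n}$ is preserved under $f_n$, the forward orbits $\{P_n, f_n(P_n), f_n^2(P_n),\ldots\}$ then supply arbitrarily large sets of $K$-points all sharing this vanishingly small normalized canonical height, playing the structural role that large sets of $K$-rational preperiodic points play in \cite{Looper:UBCunicrit}.

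Next, I apply Proposition \ref{prop:adgoodbars} to these orbit collections to conclude that the prime support of pairwise differences $P-Q$ of orbit points is essentially concentrated at the places of bad reduction of $f_n$, the contribution from good-reduction places being negligible compared with $h_{\textup{crit}}(f_n)$. The unicritical factorization $f_n(z)-f_n(w) = z^d - w^d = \prod_{\zeta^d=1}(z-\zeta w)$ lets me propagate this support control through iteration: the difference $f_n^k(P)-f_n^k(Q)$ factors into products of $d^k$ linear forms in earlier orbit points, and Theorem \ref{thm:globaleq} furnishes the local equidistribution estimates at each bad place inside the filled Julia set needed to bound individual factors. This mirrors how equidistribution of preperiodic points is exploited in \cite{Looper:UBCunicrit}, but now for the broader class of small-height points.

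The contradiction then emerges by feeding the resulting multiplicative information into the $abc$-conjecture, which is assumed in the number field case and is unconditional over characteristic zero function fields by Mason--Stothers. Applied to triples of the form $(P-Q, Q-R, R-P)$ among orbit points, the $abc$-conjecture bounds heights of orbit differences in terms of a radical supported at bad places and at $h_{\textup{crit}}(f_n)$, with exponents that do not grow with the iteration depth $k$; on the other hand, iteration of $f_n$ inflates the left-hand height by a factor $d^k$. Balancing these forces yields a contradiction once the $d$-fold gain per iterate dominates the $abc$ error term together with the critical-height contribution, and tracking the exponents shows that $d \geq 5$ is precisely the regime in which this dominance kicks in. The main obstacle will be ensuring that the estimates from Proposition \ref{prop:adgoodbars} are sufficiently uniform both in $f_n$ and in the size of the orbit sample, and that the local geometric data packaged by Theorem \ref{thm:globaleq} translate cleanly into the multiplicative control needed for the $abc$ input; this is precisely the point at which the equidistribution results of the present paper constitute a genuine strengthening of those used in \cite{Looper:UBCunicrit}.
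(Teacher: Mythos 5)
Your overall plan---adapting the argument of \cite{Looper:UBCunicrit} and substituting Theorem \ref{thm:globaleq} and Proposition \ref{prop:adgoodbars} for the preperiodic-point equidistribution used there---is exactly the route the paper indicates, so the strategy matches.

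However, your justification for obtaining large sample sets from a single small-height point contains a genuine error. You write ``Since $\hat{h}_{f_n}$ is preserved under $f_n$, the forward orbits \ldots supply arbitrarily large sets of $K$-points all sharing this vanishingly small normalized canonical height.'' This is false: the canonical height satisfies $\hat{h}_f(f(P))=d\,\hat{h}_f(P)$, so orbit points have canonical heights $\hat{h}_f(P), d\hat{h}_f(P), d^2\hat{h}_f(P),\dots$, which grow geometrically and certainly do not all remain small. What is preserved under $f$ is the \emph{vanishing} of $\hat{h}_f$ (preperiodicity), not the value $\hat{h}_f$. The correct repair is to observe that the hypotheses of Proposition \ref{prop:adgoodbars} and Theorem \ref{thm:globaleq} involve the \emph{average} canonical height of the finite sample $T$, and to take $T=\{P,f(P),\dots,f^{M-1}(P)\}$ for a \emph{fixed} length $M$: then $\frac{1}{M}\sum_{k=0}^{M-1}\hat{h}_f(f^k(P))=\frac{d^M-1}{M(d-1)}\hat{h}_f(P)$, and by shrinking $\kappa$ by the factor $\frac{M(d-1)}{d^M-1}$ one makes the average small enough to invoke the propositions while $|T|=M$ exceeds the bound they produce. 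The points of $T$ are distinct precisely because $\hat{h}_f(P)>0$ rules out preperiodicity. This bounded-length-orbit reduction is what makes the passage from the uniform boundedness statement to the dynamical Lang statement work; as written, your step asserting height preservation would not compile into a proof.

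Beyond that, your account of the $abc$ step (triples $(P-Q,Q-R,R-P)$, a $d$-fold height gain per iterate, the threshold $d\ge5$) is a reasonable paraphrase of the shape of the argument in \cite[\S7--8]{Looper:UBCunicrit}, though the proposal stays at the level of a sketch; since the paper itself only defers to that reference, I will not press for further detail here. Do be careful about the field of definition when invoking the factorization $z^d-w^d=\prod_{\zeta^d=1}(z-\zeta w)$: the $d$-th roots of unity may not lie in $K$, and the application of the $abc$-conjecture has to be arranged over a fixed finite extension whose degree depends only on $d$ and $K$, just as in \cite{Looper:UBCunicrit}.
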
 In particular, this result holds unconditionally when $K$ is a function field. The proof is immediate by adapting \cite[\S7--8]{Looper:UBCunicrit} to incorporate Proposition \ref{prop:adgoodbars}.
	
	\indent\textbf{Acknowledgements.} I would like to thank Rob Benedetto, Laura DeMarco, Holly Krieger, Joe Silverman, and Tom Tucker for helpful discussions related to this project. I thank Rob Benedetto in particular for suggesting the use of cross ratios as a potential approach in this problem. I also thank Matt Baker for useful suggestions concerning the exposition. Finally, I would like to sincerely thank the referees (particularly one extraordinarily diligent referee) for an extremely careful and astute job, and for a very great number of helpful comments on several drafts of this paper.
	
	\section{Background}\label{section:background}
	
	\subsection{Notation} We set the following notation:
	
	\setlength{\tabcolsep}{15pt}
	
	\begin{tabular}{r p{10cm}}
		
		$K$ & a number field, or a finite extension of a field $k(t)$ of rational functions in one variable over a field $k$ of characteristic $0$ or characteristic $>d$, where $k$ is assumed without loss to be algebraically closed  \\ $F$ & $\QQ$ if $K$ is a number field, or $k(t)$ if $K/k(t)$ is a function field \\ $M_K$ & a complete set of inequivalent places of $K$, with absolute values $|\cdot|_v$ normalized to extend the standard absolute values on $F$ \\ $M_K^0$ & the set of non-archimedean places in $M_K$  \\ $M_K^\infty$ & the set of archimedean places in $M_K$\\ $\mathscr{S}_d$ & the set of places of $K$ whose residue characteristic is $\le d$ (empty if $K$ is a function field) \\ $k_{\mathfrak{p}}$ & the residue field associated to the non-archimedean place $\mathfrak{p}$ of $K$ \\
		$N_\mathfrak{p}$ & $\fr{\log(\#k_\mathfrak{p})}{[K:\QQ]}$ for $K$ a number field and $\mathfrak{p}\in M_K^0$ \\ & $\fr{[k_\mathfrak{p}:k]}{[K:k(t)]}$ for $K/k(t)$ a function field and $\mathfrak{p}\in M_K$\\ $r_v$ & $\frac{[K_v:F_v]}{[K:F]}$
	\end{tabular} 
	
	\noindent If $K$ is a number field, let $\mathcal{O}_K$ denote the ring of integers. If $K$ is a function field, let $\mathcal{O}_K$ be the integral closure of $k[t]$ in $K$. If $K$ is a number field, $n\ge 2$ and $P=(z_1,\dots,z_n)\in\mathbb{P}^{n-1}(K)$ with $z_1,\dots,z_n\in K$, let \begin{equation*}\begin{split}h(P)=&\sum_{\textup{primes }\mathfrak{p} \textup{ of } \mathcal{O}_K} -\min\{v_{\mathfrak{p}}(z_1),\dots,v_{\mathfrak{p}}(z_n)\}N_{\mathfrak{p}}\\&+\dfrac{1}{[K:\QQ]} \sum_{\sigma:K\hookrightarrow\CC} \log\max\{|\sigma(z_1)|,\dots,|\sigma(z_n)|\},\end{split}\end{equation*} where we do not identify conjugate embeddings. Here, $v_\mathfrak{p}$ is normalized to have $\mathbb{Z}$ as its image. If $K$ is a function field, let \[h(P)=\sum_{\textup{primes }\mathfrak{p} \textup{ of } \mathcal{O}_K} -\min\{v_{\mathfrak{p}}(z_1),\dots,v_{\mathfrak{p}}(z_n)\}N_{\mathfrak{p}}.\] For any $P=(z_1,\dots,z_n)\in\mathbb{P}^{n-1}(K)$ with $z_1,\dots,z_n\in K^*$, define \begin{equation}\label{eqn:I} I(P)=\{\textup{primes }\mathfrak{p} \textup{ of } \mathcal{O}_K\mid v_{\mathfrak{p}}(z_i)\ne v_{\mathfrak{p}}(z_j)\textup{ for some } 1\le i,j\le n\}\end{equation} and let \begin{equation*}\textup{rad}(P)=\sum_{\mathfrak{p}\in I(P)} N_{\mathfrak{p}}.\end{equation*} \subsection{The $abcd$-conjecture} In order to prove Theorems \ref{thm:UBCpolys} and \ref{thm:dynLangpolys}, we will use a generalization of the $abc$-conjecture. The standard $abc$-conjecture corresponds to the case $n=3$ in Conjecture \ref{conj:abcd} below. 
	
	\begin{conj}[The $abcd$-conjecture]{\label{conj:abcd}} Let $K$ be a number field or a one-dimensional function field of characteristic zero, and let $n\ge 3$. Let $[Z_1:\cdots:Z_n]$ be the standard homogeneous coordinates on $\mathbb{P}^{n-1}(K)$, and let $\mathcal{H}$ be the hyperplane given by $Z_1+\dots+Z_n=0$. For any $\epsilon>0$, there is a proper Zariski closed subset $\mathcal{Z}=\mathcal{Z}(K,\epsilon,n)\subsetneq\mathcal{H}$ and a constant $C_{K,\mathcal{Z},\epsilon,n}$ such that for all $P=(z_1,\dots,z_n)\in\mathcal{H}\setminus\mathcal{Z}$ with $z_1,\dots,z_n\in K^*$, we have \[h(P)<(1+\epsilon)\textup{rad}(P)+C_{K,\mathcal{Z},\epsilon,n}.\]\end{conj}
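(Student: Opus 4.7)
The plan is to split the argument cleanly into the function field case, where the statement is essentially known, and the number field case, where it is genuinely open; I would prove the former directly and reduce the latter to Vojta's conjecture. For $K$ a one-dimensional function field of characteristic zero, I would derive Conjecture~\ref{conj:abcd} from the Brownawell--Masser generalization of the Mason--Stothers theorem. Setting $w_i = z_i/z_n$ for $1 \le i \le n-1$, the defining relation $z_1 + \cdots + z_n = 0$ becomes $w_1 + \cdots + w_{n-1} = -1$. Choosing a separating derivation on $K$ and forming the Wronskian $W(w_1,\ldots,w_{n-1})$ yields a nonzero function whose divisor of zeros and poles is controlled by the places in $I(P)$, the genus of $K$, and $n$. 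Standard estimates on the orders of vanishing then give $h(P) \le \textup{rad}(P) + O_{K,n}(1)$, which is stronger than the claim (one may take $\epsilon = 0$). The exceptional $\mathcal{Z}$ is precisely the locus on which $W$ vanishes identically, i.e.\ where some proper sub-sum of the $z_i$ is identically zero.

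For a number field $K$, I would recognize the inequality as an instance of the truncated form of Vojta's conjecture. The hyperplane $\mathcal{H} \subset \mathbb{P}^{n-1}$ is isomorphic to $\mathbb{P}^{n-2}$, and the divisors $D_i = \mathcal{H} \cap \{Z_i = 0\}$ cut out $n$ hyperplanes in general position on $\mathcal{H}$. Since $K_{\mathbb{P}^{n-2}} \sim -(n-1)H$ and $\sum_i D_i \sim nH$, the log-canonical divisor $K_\mathcal{H} + \sum_i D_i$ has class $\mathcal{O}(1)$. Vojta's conjecture applied to the pair $(\mathcal{H}, \sum_i D_i)$ then asserts, for every $\epsilon > 0$, the existence of a proper Zariski-closed $\mathcal{Z}$ such that $h(P) \le (1+\epsilon) N^{(1)}(\sum_i D_i, P) + O(1)$ for $P \in \mathcal{H}(K) \setminus \mathcal{Z}$. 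The truncated counting function on the right matches $\textup{rad}(P)$ in the definition from the excerpt, giving Conjecture~\ref{conj:abcd} directly. The exceptional $\mathcal{Z}$ is then the union of linear subvarieties on which proper sub-sums of the $z_i$ vanish identically, these being exactly the loci where cancellation inflates $h(P)$ beyond what can be bounded by $\textup{rad}(P)$.

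The main obstacle is, of course, that Vojta's conjecture over number fields is wide open, and the $n=3$ case of Conjecture~\ref{conj:abcd} is already the classical $abc$-conjecture. A serious attempt would require either producing global sections of $K_{\mathcal{H}} + \sum_i D_i + \epsilon A$ with prescribed vanishing along $\mathcal{Z}$ (in the spirit of Vojta's original Diophantine approximation method or Faltings's product theorem), or finding an entirely new route to such height inequalities; neither approach is currently within reach. The realistic content of a proof plan is therefore to execute the Wronskian argument rigorously in the function field case and to isolate precisely which consequence of Vojta's conjecture is being assumed in the number field case. In the present paper, Conjecture~\ref{conj:abcd} is used as a black box, and the work of Theorems~\ref{thm:UBCpolys} and \ref{thm:dynLangpolys} consists in reducing the uniform boundedness question for preperiodic points of polynomials to this single arithmetic input.
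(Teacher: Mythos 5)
This statement is a conjecture, not a theorem: the paper offers no proof of it, assumes it as a hypothesis in Theorems \ref{thm:UBCpolys} and \ref{thm:dynLangpolys} (for function fields as well as number fields), and records only that it follows from Vojta's Conjecture \ref{conj:Vojta}, with the derivation cited to \cite{Looper:UBCunicrit}. Your number-field discussion is consistent with this: identifying $\mathcal{H}\cong\mathbb{P}^{n-2}$, taking $D=\sum_i(\mathcal{H}\cap\{Z_i=0\})$, computing $K_\mathcal{H}+D\sim\mathcal{O}(1)$, and matching $N^{(1)}(D,P)$ with $\textup{rad}(P)$ is exactly the standard reduction, and you correctly flag that nothing here is provable at present.

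The genuine error is your claim to prove the function field case unconditionally with $\epsilon=0$ via the Wronskian. The Wronskian of $w_1,\dots,w_{n-1}$ controls only the counting function truncated at level $n-2$: at a place where some $w_i$ vanishes to order $k$, the Wronskian is forced to vanish only to order about $\max\{0,k-(n-2)\}$, so the method yields $h(P)\le(1+\epsilon)\sum_i N^{(n-2)}(D_i,P)+O(1)$, not a bound against the level-one truncation $\textup{rad}(P)$. Converting to truncation level one is precisely where the known constants degrade: the Brownawell--Masser inequality gives $h(P)\le\tfrac{(n-1)(n-2)}{2}\,\textup{rad}(P)+O_{K,n}(1)$, and the coefficient $\tfrac{(n-1)(n-2)}{2}$ equals $1$ only for $n=3$ (Mason--Stothers) and is strictly larger than $1+\epsilon$ for all $n\ge4$. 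Conjecture \ref{conj:abcd}, with coefficient $1+\epsilon$ on a level-one radical, is open even over function fields of characteristic zero for $n\ge4$; this is exactly why the paper assumes it in the function field case too, and why the unconditional function-field result (Theorem \ref{thm:dynLangunicrit}) is restricted to unicritical maps, where only the $n=3$ case is needed. So your proposed split into ``known for function fields, open for number fields'' does not match the actual state of the art, and the Wronskian step as written would fail.
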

	
	For $X$ a smooth projective variety, a divisor $D\in\text{Div}(X)$ and $v\in M_K$, let $\lambda_{D,v}$ be a $v$-adic local height on $(X\setminus D)(K_v)$ relative to $D$. (For background on local height functions, see \cite[Chapter B.8]{HindrySilverman:DiophantineGeometry}.) For $P\in X(K)$, let $h_D(P)=\sum_{v\in M_K}r_v\lambda_{D,v}(P)$. We say that an effective divisor $D\in\textup{Div}(X)$ is a \emph{normal crossings divisor} if $D=\sum_{i=1}^r D_i$ for distinct irreducible subvarieties $D_i$, and the variety $\cup_{i=1}^r D_i$ has normal crossings.
	
	\begin{definition*} Let $S\subseteq M_K$ be a finite set of places of $K$ containing $M_K^\infty$, let $X$ be a smooth projective variety, and let $D$ be an effective divisor on $X$. For $P\in X(K)\setminus D$, and $\lambda_{D,\mathfrak{p}}$ a set of local height functions relative to $D$, the arithmetic truncated counting function is \[N_S^{(1)}(D,P)=\sum_{\mathfrak{p}\in M_K\setminus S}
		\chi(\lambda_{D,\mathfrak{p}}(P))N_\mathfrak{p}\] where for $a\in\mathbb{R}$, \[\chi(a)=\begin{cases}0 & \text{if }a\le0\\ 1 & \text{if }a>0.\end{cases}\]\end{definition*} 
	
	In the discussion succeeding \cite[Conjecture 2.2]{Looper:UBCunicrit}, it is shown that Conjecture \ref{conj:abcd} is a consequence of the following conjecture \cite[Conjecture 2.3]{Vojta} of Vojta.
	
	\begin{conj}\cite[Conjecture 2.3]{Vojta}\label{conj:Vojta} Let $K$ be a number field or a one-dimensional function field of characteristic zero, and let $S$ be a finite set of places of $K$ containing the archimedean places. Let $X$ be a smooth projective variety over $K$, let $D$ be a normal crossings divisor on $X$, let $K_X$ be a canonical divisor on $X$, let $A$ be an ample divisor on $X$, and let $\epsilon>0$. Then there exists a proper Zariski closed subset $\mathcal{Z}=\mathcal{Z}(K,S,X,D,A,\epsilon)\subsetneq X$ such that \begin{equation*}N_S^{(1)}(D,P)\ge h_{K_X+D}(P)-\epsilon h_A(P)+O(1)\end{equation*} for all $P\in X(K)\setminus\mathcal{Z}$.
	\end{conj}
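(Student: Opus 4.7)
The statement to be proved is Vojta's main conjecture, one of the central open problems of diophantine geometry; I do not expect to produce a full proof, but the plan is to sketch the Arakelov-theoretic framework that has succeeded in special cases. I would first fix a projective embedding of $X$, choose a regular integral model $\mathcal{X}/\mathrm{Spec}\,\mathcal{O}_K$, and extend $D$ to a horizontal divisor $\mathcal{D}$. In this setup the local height $\lambda_{D,\mathfrak{p}}(P)$ is interpreted as an intersection number of the section $\overline{P}$ with $\mathcal{D}$ over the closed point $\mathfrak{p}$, and $N_S^{(1)}(D,P)$ becomes a truncated intersection-theoretic count of the primes $\mathfrak{p}\notin S$ at which $P$ specializes onto the reduction of $D$. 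The right-hand side $h_{K_X+D}$ is likewise an Arakelov-theoretic quantity, and the conjecture asserts that ramification of $P$ against the boundary $\mathcal{D}$ should account, up to an $\epsilon h_A$ error, for all of $K_X+D$.

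The core strategy, modeled on Vojta's proof of the Mordell conjecture, would be to pass to the $n$-fold product $X^n$, choose a line bundle $\mathcal{L}$ built from the pullbacks of $K_X+D$ and $A$ on each factor, and construct a global section $s$ of a large power of $\mathcal{L}$ with controlled Arakelov norm via an arithmetic Siegel lemma. Given an $n$-tuple $(P_1,\ldots,P_n)\in X(K)^n$ of putative counterexamples whose heights grow in a rapid geometric progression, I would bound the order of vanishing of $s$ at $(P_1,\ldots,P_n)$ from above using a product-type theorem (Faltings' in the semi-abelian setting, Roth/Dyson lemmas on products of curves), and from below using the arithmetic Hilbert--Samuel formula together with the positivity of $K_X+D$ at the putative locus. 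The gap between the two estimates would force a contradiction unless $(P_1,\ldots,P_n)$ lies in a proper closed subset, which one would then identify with the exceptional locus $\mathcal{Z}$ after standard Noetherian bookkeeping.

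The main obstacle is precisely the absence of a general product-type theorem outside the semi-abelian setting: Faltings' product theorem requires either an abelian-variety ambient space or a very specific hyperbolicity hypothesis, and no analog is known for arbitrary smooth projective $X$ with normal-crossings $D$. This is the reason the conjecture remains open in general and is established only in isolated cases such as (i) $X$ a curve of genus at least $2$ with $D=0$ (Faltings, Vojta), (ii) $X$ a closed subvariety of a semi-abelian variety (Faltings, Vojta, Evertse--Ferretti), and (iii) $X=\mathbb{P}^N$ with $D$ a union of hyperplanes in general position (Schmidt's subspace theorem). A genuinely new case---in particular one strong enough to imply the $abcd$-conjecture used elsewhere in this paper---would almost certainly require a fundamentally new mechanism, perhaps a diophantine analog of Ahlfors' second main theorem in several complex variables, rather than a refinement of the existing Arakelov machinery.
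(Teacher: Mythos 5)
This statement is Vojta's general conjecture, which the paper states as Conjecture \ref{conj:Vojta} and does not prove: it is cited as an open conjecture from \cite{Vojta}, invoked only because the $abcd$-conjecture (Conjecture \ref{conj:abcd}) is shown elsewhere to follow from it. You have correctly recognized that no proof exists and that none is expected here, so there is no gap to flag relative to the paper. Your sketch of the Arakelov-theoretic strategy, the known special cases (Faltings/Vojta for subvarieties of semi-abelian varieties, the Schmidt subspace theorem for hyperplanes in $\mathbb{P}^N$), and the obstruction posed by the lack of a general product theorem is an accurate summary of the state of the art, but it should be read as a survey of why the conjecture is open rather than as a proof attempt; the paper treats the statement purely as a hypothesis.
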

	
	\begin{rmk}The version of this conjecture appearing in \cite[Conjecture 2.3]{Vojta} is stated for points $P\in X(L)$ where $L$ has bounded degree over $K$, at the expense of a logarithmic discriminant term $d(L/K)$. Here we will only require the weaker statement appearing in Conjecture \ref{conj:Vojta}.\end{rmk}
	
	\subsection{Non-archimedean potential theory}\label{subsec:potthy} For $v\in M_K$, let $\CC_v$ be the $v$-adic completion of $\overline{K_v}$. We recall that the absolute values $|\cdot|_v$ are normalized to extend the standard $v$-adic absolute values on $F$ (satisfying $|p|_p=\frac{1}{p}$ for all primes $p\in\mathbb{Z}_+$ when $K$ is a number field, and $|t-a|_v=\frac{1}{e}$ for $v$ corresponding to the prime $(t-a)\mathcal{O}_K$ when $K$ is a function field). For $f(z)\in\CC_v[z]$ of degree $d\ge2$ and $z\in\C_v$, let \[\hat{\lambda}_v(z)=\lim_{n\to\infty} \fr{1}{d^n}\log\max\{1,|f^n(z)|_v\}\] be the standard $v$-adic escape-rate function. (See \cite[\S3.4, 3.5]{Silverman:ADS} for a proof that the limit defining $\hat{\lambda}_v(z)$ exists.) Note that $\hat{\lambda}_v(z)$ obeys the transformation rule \begin{equation*}\hat{\lambda}_v(f(z))=d\hat{\lambda}_v(z)\end{equation*} for all $z\in \CC_v$. 
	
	Let $\mathbf{A}_v^1$ denote the Berkovich affine line over $\CC_v$, where each point $x\in\mathbf{A}_v^1$ is associated to a seminorm $[\cdot]_x$ on $\mathbb{C}_v[T]$. For $a\in\CC_v$, we define open and closed Berkovich disks of radius $r$ \[\mathcal{B}(a,r)^-=\{x\in\textbf{A}_v^1:[T-a]_x<r\}\] and \[\mathcal{B}(a,r)=\{x\in\textbf{A}_v^1:[T-a]_x\le r\},\] corresponding to the classical disks \[D(a,r)^-=\{z\in\CC_v:|z-a|_v< r\}\] and  \[D(a,r)=\{z\in\CC_v:|z-a|_v\le r\}\] respectively. A basis for the open sets of $\textbf{A}_v^1$ is given by sets of the form $\mathcal{B}(a,r)^-$ and $\mathcal{B}(a,r)^-\setminus\cup_{i=1}^N\mathcal{B}(a_i,r_i)$, where $a,a_i\in \CC_v$ and $r,r_i>0$. We consider $\textbf{A}_v^1$ as a measure space whose Borel $\sigma$-algebra is generated by this topology. For $z,w\in\mathbf{A}_v^1$, let $\delta_v(z,w)$ denote the Hsia kernel relative to infinity (see \cite[Section 4.1]{BakerRumely}); it is the radius of the smallest Berkovich disk containing both $z$ and $w$. For Berkovich sets $\mathcal{B}_1,\mathcal{B}_2$, we will write \[\delta_v(\mathcal{B}_1,\mathcal{B}_2)=\sup_{x\in\mathcal{B}_1,y\in\mathcal{B}_2}\{\delta_v(x,y)\}\] for convenience. The Berkovich $v$-adic filled Julia set of $f(z)\in\mathbb{C}_v[z]$ is defined as \[\mathcal{K}_v=\bigcup_{M>0}\{x\in\textbf{A}_v^1:[f^n(z)]_x\le M\text{ for all } n\ge0\}.\]
	
	Now suppose $|\cdot|_v$ is non-archimedean. By \cite[Proposition 7.33]{Benedetto:book}, if $\mathcal{E}$ is the smallest disk containing $\mathcal{K}_v$, then for all $m\ge 1$, $f^{-m}(\mathcal{E})$ is a finite union of disjoint closed disks. We refer to the preimage disks as the \emph{disk components} of $f^{-m}(\mathcal{E})$. If $f^{-1}(\mathcal{E})$ is not a disk, we say that $v$ is a place of \emph{bad reduction}, or alternatively a \emph{bad place}. For a bad place $v$, the log of the radius of $\mathcal{E}$ will be called the \emph{splitting radius} of $f$ at $v$. It will be denoted $g_v$.
	
	If $R_f$ is the set of finite critical points of $f$, we will define the \emph{$v$-adic critical height} to be \[\lambda_{\textup{crit},v}(f):=\max_{a\in R_f}\{\hat{\lambda}_v(a)\}.\] For $f(z)\in K[z]$, let \[h_{\textup{crit}}(f)=\sum_{v\in M_K}r_v\lambda_{\textup{crit},v}(f).\] When $v\in M_K^0\setminus\mathscr{S}_d$ is a place of bad reduction for a monic polynomial $f(z)\in K[z]$ of degree $d$, the splitting radius $g_v$ satisfies  \begin{equation}\label{eqn:splittingradiusmonic}g_v=\lambda_{\textup{crit},v}(f)\end{equation} by \cite[Lemmas 2.1 and 2.2]{Ingram:PCF}. We retain the two different notations for concepts that coincide over all relevant places in order to make it clear which concept (splitting radius or critical height) is the applicable one.
	
	We will also be using a measure of the size of a set of bad places for a given polynomial $f(z)\in K[z]$.
	
	\begin{definition*} For $0<\delta<1$, $f\in K[z]$ and $S\subseteq M_K^0$, a \emph{$\delta$-slice of places} $v\in S$ is a set $S'$ of bad places $v\in S$ of $f$ such that \[\sum_{v\in S'}r_v\lambda_{\textup{crit},v}(f)\ge\delta\sum_{v\in S}r_v\lambda_{\textup{crit},v}(f).\] We remark that $S$ and $S'$ may be infinite. \end{definition*}

	Finally, we introduce the potential-theoretic terminology that will be used at places of bad reduction. Let $v\in M_K^0$, let $E\subseteq\mathbf{A}_v^1$, and let $\nu$ be a probability measure with support contained in $E$. The potential function of $\nu$ is by definition \[p_\nu(z)=\int_E-\log\delta_v(z,w)d\nu(w),\] and the energy integral of $\nu$ is \[I(\nu)=\int_E p_\nu(z)d\nu(z).\] The integrals here are Lebesgue integrals; the function $\delta_v(z,w)$ is upper semicontinuous \cite[Proposition 4.1(A)]{BakerRumely}, so $-\log\delta_v(z,w)$ is lower semicontinuous, and hence Borel measurable relative to the $\sigma$-algebra generated by the Berkovich topology.

	The capacity of $E$ is \begin{equation}\label{eqn:gammadef}\gamma(E):=e^{-\inf_\nu I(\nu)}.\end{equation} If $E$ is compact and $\gamma(E)>0$, there is a unique probability measure $\mu_E$ on $E$ for which $I(\mu_E)=\inf_\nu I(\nu)$ \cite[Proposition 7.21]{BakerRumely}. This measure $\mu_E$ is called the \emph{equilibrium measure} for $E$. In the case where $E$ is a disk, $\mu_E$ is simply a Dirac mass at the unique boundary point of $E$, and hence $\gamma(E)$ equals the radius of $E$, a pair of facts we will use heavily. When $E$ is compact, the capacity coincides with the quantity \[\lim_{n\to\infty}\sup\left\{\prod_{i\ne j}\delta_v(z_i,z_j)^{1/(n(n-1))}:z_1,\dots,z_n\in E\right\},\] which is known as the transfinite diameter of $E$ \cite[Theorem 6.24]{BakerRumely}. For a set $T\subseteq\mathbf{A}_v^1$ of $n$ points $z_1,\dots,z_n$, let \begin{equation}\label{eqn:dvdef}d_v(T):=\prod_{i\ne j}\delta_v(z_i,z_j)^{1/(n(n-1))}.\end{equation}
	
	As the non-archimedean analogue of Frostman's Theorem plays an important role in \S\ref{section:equidistribution}, we state it here. 
	
	\begin{thm}\cite[Theorem 6.18]{BakerRumely}\label{thm:Frostman} Let $E\subseteq\mathbf{A}_v^1$ be a compact set of positive capacity, with equilibrium measure $\mu$. Then $p_\mu(z)=I(\mu)$ for all $z\in E$ outside of a set of capacity $0$. \end{thm} In particular, when $E$ is a finite union of disjoint disks, so that $\mu$ is an atomic measure supported exactly on the unique boundary points of these disks, one has $p_\mu(z)=I(\mu)$ for all $z\in E$. Moreover, we note for future use that when $E=\bigsqcup\mathcal{E}_i$ is a union of sets such that $\mathcal{E}_i\cap\mathcal{E}_j\ne\emptyset\Longrightarrow\mathcal{E}_i=\mathcal{E}_j$, and $\nu_E:=\sum k_i\mu_{\mathcal{E}_i}$ where $\mu_{\mathcal{E}_i}$ is the equilibrium measure on $\mathcal{E}_i$ and the $k_i$ are non-negative real numbers with $\sum k_i=1$, Theorem \ref{thm:Frostman} implies that \begin{equation}\label{eqn:prepoteq}p_{\nu_E}(z)=-\sum_{\mathcal{E}_j=\mathcal{E}_i}k_j\log\gamma(\mathcal{E}_i)-\sum_{\mathcal{E}_j\ne\mathcal{E}_i} k_j\log\delta_v(\mathcal{E}_j,\mathcal{E}_i)\hspace{10mm}\forall z\in\mathcal{E}_i\end{equation} and thus \begin{equation}\label{eqn:poteq}I(\nu_E)=\sum_i k_i\left(-\sum_{\mathcal{E}_j=\mathcal{E}_i}k_j\log\gamma(\mathcal{E}_i)-\sum_{\mathcal{E}_j\ne\mathcal{E}_i} k_j\log\delta_v(\mathcal{E}_j,\mathcal{E}_i)\right).\end{equation}
	
	\section{Global quantitative equidistribution}\label{section:equidistribution}
	
	The main result of this section is Theorem \ref{thm:globaleq2}, which is a key precursor to our uniform global quantitative equidistribution theorem over degree $d$ polynomials (Theorem \ref{thm:globaleq}). The argument shares a similar basic idea as an analogous precursor to the main equidistribution theorem of \cite[Corollary 3.8]{Looper:UBCunicrit}: the primary difference is that one must account for the many possible large scale structures of the filled Julia set at a place $v\in M_K^0\setminus\mathscr{S}_d$ of bad reduction, in contrast with the unicritical case, where only one such structure occurs. Below we introduce the key concepts of \emph{wing decompositions} and \emph{$\epsilon$-equidistribution}, before introducing Theorem \ref{thm:globaleq2} and outlining the main idea behind its local inputs, Propositions \ref{prop:capacityconverge} and \ref{prop:uniformeq}.
	
	Let $v\in M_K^0\setminus\mathscr{S}_d$, and let $f(z)\in\CC_v[z]$ of degree $d\ge2$ have bad reduction. Let $g_v$ be the splitting radius of $f$, let $\mathcal{E}$ be the unique disk of radius $\textup{exp}(g_v)$ containing $\mathcal{K}_v$, and let \begin{equation}\label{eqn:Emdef} \mathcal{E}_m:=f^{-m}(\mathcal{E}).\end{equation} Let $\mathcal{B}_{1,1},\dots,\mathcal{B}_{1,d}$ be the disk components of $\mathcal{E}_1$, listed with multiplicity. Similarly, for $m\ge2$, list the disk components $\{\mathcal{B}_{m,i}\}_{i=1}^{d^m}$ of $\mathcal{E}_m$ inductively, so that $\mathcal{B}_{m,i}\subseteq\mathcal{B}_{1,j}$ for $\lceil \frac{i}{d^{m-1}}\rceil=j$, and according to multiplicity. 
	
	A \emph{wing decomposition} of $\mathcal{E}_1$ is a partition of $\mathcal{E}_1$ into two nonempty disjoint sets (wings) $A$ and $B$ with the following properties:
	\begin{itemize}\item $A$ and $B$ are unions of disk components of $\mathcal{E}_1$ \item For any disk components $\mathcal{B}_{1,i}$ of $A$ and $\mathcal{B}_{1,j}$ of $B$, we have  $\log\delta_v(\mathcal{B}_{1,i},\mathcal{B}_{1,j})=g_v$. \end{itemize} A wing decomposition is not unique in general. We note that wing decompositions always exist, since the smallest disk containing $\mathcal{K}_v$ is $\mathcal{E}$, and $\log\text{diam}(\mathcal{E})=g_v$.
	
	\begin{definition*} Let $\epsilon>0$. We say that a finite set $T\subseteq\CC_v$ (not necessarily contained in $\mathcal{E}_1$) is $\epsilon$-equidistributed (at the place $v$) if for every wing decomposition of $\mathcal{E}_1$ as above, we have \begin{equation}\label{eqn:epsilonequidistribution} |T\cap A|>\left(\frac{1-\epsilon}{d}\right)|T|.\end{equation}\end{definition*} The main stepping stone to Theorem \ref{thm:globaleq} is the following theorem.
	
	\begin{thm}\label{thm:globaleq2}Let $\epsilon>0$, let $0<\delta<1$, and let $d\ge2$. There are constants $N$, $M$, $\xi>0$, and $\kappa>0$ depending only on $\epsilon$, $\delta$, and $d$, with the following property. Let $f(z)\in K[z]$ be a polynomial of degree $d$, and let $T\subseteq K$ be a finite set. If $|T|\ge N$, $h_{\textup{crit}}(f)\ge M$, \[\sum_{v\in M_K^0\setminus\mathscr{S}_d}r_v\lambda_{\textup{crit},v}(f)\ge(1-\xi)h_{\textup{crit}}(f),\] and \[\frac{1}{|T|}\sum_{P_i\in T}\hat{h}_f(P_i)\le\kappa h_{\textup{crit}}(f),\] then $T$ is $\epsilon$-equidistributed for a $\delta$-slice of bad places $v\in M_K^0\setminus\mathscr{S}_d$.\end{thm}
	
	Let \begin{equation}\label{eqn:k}\vec{k}=(k_{1},\dots,k_{d})\in(\mathbb{Q}\cap[0,1])^{d}\end{equation} with $\sum_{i=1}^{d}k_{i}=1$. For $v\in M_K^0$ a place of bad reduction for $f\in\CC_v[z]$ of degree $d\ge2$ with filled Julia set $\mathcal{K}_v$, we say a nonempty finite set $T\subseteq\CC_v$ is \emph{$\vec{k}$-distributed} (with respect to $f$ and $v$) if for any $1\le i\le d$, \[ \sum_{\mathcal{B}_{1,j}=\mathcal{B}_{1,i}}k_j|T|=|T\cap\mathcal{B}_{1,i}|,\] where the $\mathcal{B}_{1,i}$ are listed with multiplicity. Here the $k_i$ are defined with respect to the indexing of the $\mathcal{B}_{1,i}$ that we have fixed from the outset. Note that when some $\mathcal{B}_{1,i}$ has multiplicity greater than $1$, $\vec{k}$ is not in general uniquely determined by $T$. We will typically omit the mention of $f$ and $v$ from here on out, as the implied dependence is clear.
	
	In proving the potential-theoretic propositions needed in the proof of Theorem \ref{thm:globaleq}, we will assume $f(z)\in\CC_v[z]$ is a \emph{monic} polynomial having bad reduction, and splitting radius $g_v$. This hypothesis is mathematically inessential, and merely serves to simplify the presentation. 
	
	Let us outline the idea behind the proofs of Propositions \ref{prop:capacityconverge} and \ref{prop:uniformeq}, which are key in proving the local result (Proposition \ref{prop:wtdcap}) underlying Theorem \ref{thm:globaleq}. The goal is to consider modifications of the usual capacity of the Julia set, which assign varying weights to the subsets of $\mathcal{K}_v$ contained in different disk components $\mathcal{E}_{1,i}$ of $\mathcal{E}_1$. A key observation is that once a large scale structure of the filled Julia set $\mathcal{K}_v$ is fixed, and weights are given on each disk component in this structure, the energy corresponding to this set of weights is determined. We explain this idea further after Equation (\ref{eqn:limcaps}). From there, one considers only sets of weights corresponding to a failure of $\epsilon$-equidistribution. Pairs of what we will call balanced $1$-structures and weight vectors can be given a topology such that the subset in question is compact. It is the compactness of this space that allows us to reduce a question about an infinite collection of possible structures and weights to standard facts that hold for any single filled Julia set. 
	
	In order to make these ideas precise, we introduce the following definition.

	\begin{definition*} Let $v\in M_K^0$, and let $f\in\CC_v[z]$ be monic of degree $d\ge2$, with splitting radius $g_v>0$. Let $m_0\ge1$. An \emph{$m_0$-structure} (with respect to $f$ and $v$) is an element \begin{equation}\label{eqn:tuple} (r_{1,1},\dots,r_{1,d^{m_0}},\dots,r_{d^{m_0},1},\dots,r_{d^{m_0},d^{m_0}})\in(-\infty,1]^{d^{2m_0}}\end{equation} such that there is a union of closed disks \begin{equation}\label{eqn:Fm0}\mathcal{F}_{m_0}=\bigcup_{i=1}^{d^{m_0}}\mathfrak{b}_{m_0,i}\subseteq\mathbf{A}_v^1,\end{equation} with no $\mathfrak{b}_{m_0,i}$ properly contained in any $\mathfrak{b}_{m_0,j}$, satisfying \begin{equation}\label{eqn:radii} \frac{\log\delta_v(\mathfrak{b}_{m_0,i},\mathfrak{b}_{m_0,j})}{g_v}=r_{i,j}\end{equation} for all $1\le i,j\le d^{m_0}$, \begin{equation}\label{eqn:admissible}\max_{i,j}\{r_{i,j}\}=1,\end{equation} and \begin{equation}\label{eqn:cap} \frac{\log\gamma(\mathcal{F}_{m_0})}{g_v}=\fr{1}{d^{m_0}}.\end{equation} (We remind the reader that $\gamma$ was defined in (\ref{eqn:gammadef}).) If, additionally, \begin{equation}\label{eqn:smallerinterval}(r_{1,1},\dots,r_{1,d^{m_0}},\dots,r_{d^{m_0},1},\dots,r_{d^{m_0},d^{m_0}})\in[-d^{m_0},1]^{d^{2m_0}},\end{equation} and \begin{equation}\label{eqn:meshcondition}\mu(\mathfrak{b}_{m_0,i})=\frac{d_i}{d^{m_0}},\end{equation} where $\mu$ is the equilibrium measure on $\mathcal{F}_{m_0}$, and $d_i$ is the number of indices $1\le j\le d^{m_0}$ for which $\mathfrak{b}_{m_0,i}=\mathfrak{b}_{m_0,j}$, then we call (\ref{eqn:tuple}) a \emph{balanced $m_0$-structure}.\end{definition*} We refer to such an $\mathcal{F}_{m_0}$ as an \emph{underlying set} of the $m_0$-structure $\Sigma$. We note for clarity that a key instance of an $\mathcal{F}_{m_0}$ is given by $\mathcal{E}_{m_0}$, with the $\mathfrak{b}_{m_0,i}$ its disk components. The purpose of this construction is to situate each $\mathcal{E}_{m_0}$ in a larger ``filled-in'' space of sets that have a similar nature, so that our topological argument can be used. 
	
	Given $\Sigma$ a \textbf{balanced} $m_0$-structure, let a \emph{$\Sigma$-mesh} be a sequence $\{\Sigma_m\}_{m=m_0}^\infty$ of (not necessarily balanced for $m\ne m_0$) $m$-structures having underlying sets \begin{equation}\label{eqn:diskdecomp} \mathcal{F}_m=\bigcup_{i=1}^{d^m}\mathfrak{b}_{m,i}\end{equation} that satisfy \[\mathfrak{b}_{m,i}\subseteq\mathfrak{b}_{m-1,j}\] for $j=\lceil\fr{i}{d}\rceil$ for all $m>m_0$, and \begin{equation}\label{eqn:levelmcap} \mu_m(\mathfrak{b}_{m_0,i}\cap\mathcal{F}_m)=\frac{d_i}{d^{m_0}},\end{equation} where $\mu_m$ is the equilibrium measure on $\mathcal{F}_m$, and for each $1\le i\le d^{m_0}$, $d_i$ is the number of indices $1\le j\le d^{m_0}$ for which $\mathfrak{b}_{m_0,i}=\mathfrak{b}_{m_0,j}$. We note that given a balanced $m_0$-structure, such a sequence must exist, which we now briefly prove by induction. Suppose $m\ge m_0$, and that $\Sigma$ is an $m$-structure with underlying set $\mathcal{F}_m$. First choose any disk component $\mathfrak{b}_{m_0,i}$ of $\mathcal{F}_{m_0}$. Let the disk components of $\mathfrak{b}_{m_0,i}\cap\mathcal{F}_m$ be indexed by $\mathcal{J}$. In each disk component $\mathfrak{b}_{m,j}$ of $\mathcal{F}_m\cap\mathfrak{b}_{m_0,i}$, we put $d$ disks $\mathfrak{d}_{j,1},\dots,\mathfrak{d}_{j,d}$ with centers $c_{j,1},\dots,c_{j,d}\in\mathfrak{b}_{m,j}\cap\CC_v$ having pairwise distance $\text{diam}(\mathfrak{b}_{m,j})$ from each other, and diameter equal to $\text{diam}(\mathfrak{b}_{m,j})$ (so that the disks in fact equal $\mathfrak{b}_{m,j}$ itself). We then shrink the $\mathfrak{d}_{j,k}$ (for all $j\in\mathcal{J}$ and all $1\le k\le d$) about their centers $c_{j,k}$ until the following condition holds. Let $\nu$ be the probability measure \[\nu=\frac{d_{i}}{d^{m_0}}\mu_\mathfrak{d}+\frac{d^{m_0}-d_{i}}{d^{m_0}}\mu_\mathfrak{b},\] where $\mu_\mathfrak{d}$ is the equilibrium measure on \[\mathcal{D}:=\bigcup_{\substack{j\in\mathcal{J}\\1\le k\le d}}\mathfrak{d}_{j,k},\] and $\mu_{\mathfrak{b}}$ is the equilibrium measure on $\bigcup_{\mathfrak{b}_{m_0,l}\ne\mathfrak{b}_{m_0,i}}\mathfrak{b}_{m_0,l}\cap\mathcal{F}_m$. We shrink the radii of the disks $\mathfrak{d}_{j,k}$ (uniformly, say) until, for each $x\in\mathcal{D}$, \[\int\log\delta_v(x,y)d\nu(y)=\frac{g_v}{d^{m+1}}.\] The disks so obtained are pairwise disjoint, by our condition on the centers. Repeating an analogous procedure for each disk component $\mathfrak{b}_{m_0,i}$ of $\mathcal{F}_{m_0}$, and taking the union of the resulting shrunk disks, we obtain an $\mathcal{F}_{m+1}$ that is an underlying set of an $(m+1)$-structure, along with equilibrium measure $\nu$ satisfying $\nu(\mathfrak{b}_{m_0,i}\cap\mathcal{F}_{m+1})=\frac{d_i}{d^{m_0}}$ for all $i$ (corresponding to our target condition (\ref{eqn:levelmcap})). Indeed, by Frostman's Theorem (Theorem \ref{thm:Frostman}), the equilibrium measure $\eta$ on a union $\mathcal{C}$ of disks is characterized by the property that the potentials are everywhere equal, i.e., that the function $\psi:\mathcal{C}\to\mathbb{R}$ given by \[\psi(x)=\int\log\delta_v(x,y)d\eta(y)\] is constant. Our construction of $\nu$ thus ensures that $\nu$ is in fact the equilibrium measure on $\mathcal{F}_{m+1}$, and that (\ref{eqn:levelmcap}) holds, with $m+1$ replacing $m$. (We remark that a more careful argument allows us to construct a mesh of \emph{balanced} $m$-structures, but this is unnecessary for our purposes.)
	
	Fix $\vec{k}$ as in (\ref{eqn:k}). For a balanced $1$-structure $\Sigma$ with $\Sigma$-mesh $\{\Sigma_m\}$ and underlying sets $\{\mathcal{F}_m\}$, define for each $m\ge 1$ the unique probability measure $\mu_{\vec{k},m}=\mu_{\vec{k},m}(\Sigma)$ on $\mathcal{F}_m$ such that for all $1\le i\le d$, \[\mu_{\vec{k},m}(\mathfrak{b}_{1,i}\cap\mathcal{F}_m)=\sum_{\mathfrak{b}_{1,j}=\mathfrak{b}_{1,i}}k_j,\] and $\mu_{\vec{k},m}$ is a scalar multiple of the equilibrium measure on $\mathfrak{b}_{1,i}\cap\mathcal{F}_m$. Let \begin{equation}\label{eqn:limcaps}\mathcal{I}_{\vec{k}}(\Sigma)=\lim_{m\to\infty}I(\mu_{\vec{k},m}(\Sigma)).\end{equation} Note that the sequence defining this limit is increasing and bounded above (as may be seen in several ways, including immediately via Proposition \ref{prop:weld1}). Thus this limit exists by the monotone convergence theorem. We also highlight the following important observation:
	
	\begin{keyfact}$\mathcal{I}_{\vec{k}}(\Sigma)/g_v$ depends only on $d$, $\vec{k}$ and on $\Sigma$. In particular, it is independent of the choice of mesh $\{\Sigma_m\}_{m=1}^\infty$ (other than the fact that it clearly depends on $\Sigma=\Sigma_1$) and of the underlying sets $\mathcal{F}_m$, and it is also independent of the $v$ and $f$ giving rise to $\Sigma$. \end{keyfact} \noindent 
	
	This is because for any $m\ge 1$, Condition (\ref{eqn:cap}) gives \[\frac{\log\gamma(\mathcal{F}_m)}{g_v}=\frac{1}{d^m}\] for all $m\ge1$, and so by (\ref{eqn:levelmcap}) applied in the case $m_0=1$ and Frostman's Theorem (Theorem \ref{thm:Frostman}), specifying the vector of radii given by the balanced $1$-structure determines each of the \begin{equation*}\tau_i:=\frac{\log\gamma(\mathfrak{b}_{1,i}\cap\mathcal{F}_m)}{g_v}\end{equation*} for $1\le i\le d$, the full set of which in turn determines $I(\mu_{\vec{k},m}(\Sigma))/g_v$ once $\vec{k}$ is specified. Indeed, letting $\mu_m$ once again be the equilibrium measure on $\mathcal{F}_m$, Frostman's Theorem yields \[\sum_{\mathfrak{b}_{1,j}=\mathfrak{b}_{1,1}}k_j\tau_j+\frac{1}{g_v}\sum_{\mathfrak{b}_{1,j}\ne\mathfrak{b}_{1,1}}k_j\log\delta_v(\mathfrak{b}_{1,1}\cap\mathcal{F}_m,\mathfrak{b}_{1,j})=-I(\mu_m)/g_v\] \[\vdots\] \[\sum_{\mathfrak{b}_{1,j}=\mathfrak{b}_{1,d}}k_j\tau_j+\frac{1}{g_v}\sum_{\mathfrak{b}_{1,j}\ne\mathfrak{b}_{1,d}}k_j\log\delta_v(\mathfrak{b}_{1,d}\cap\mathcal{F}_m,\mathfrak{b}_{1,j})=-I(\mu_m)/g_v,\] a system of $d$ linear equations in the $d$ unknowns $\tau_i$, which has a unique solution by the uniqueness of the equilibrium measure on $\mathcal{F}_m$. For $\vec{\tau}=(\tau_1,\dots,\tau_d)$ this unique solution, (\ref{eqn:poteq}) then gives \[-I(\mu_{\vec{k},m}(\Sigma))/g_v=\sum_{i=1}^dk_i\left(\sum_{\mathfrak{b}_{1,j}=\mathfrak{b}_{1,i}}k_j\tau_j+\sum_{\mathfrak{b}_{1,j}\ne\mathfrak{b}_{1,i}}k_j\log\delta_v(\mathfrak{b}_{1,i}\cap\mathcal{F}_m,\mathfrak{b}_{1,j})/g_v \right).\] Hence $\{I(\mu_{\vec{k},m}(\Sigma))/g_v\}$ and $\mathcal{I}_{\vec{k}}(\Sigma)/g_v$ are independent of the mesh $\{\Sigma_m\}$ other than at the index $m=1$. We use this fact to view the $I(\mu_{\vec{k},m}(\Sigma))/g_v$ and $\mathcal{I}_{\vec{k}}(\Sigma)/g_v$ as functions of $\Sigma$ only, and not a particular choice of $\Sigma$-mesh.
	\newline
	
	Let $\mu_{\vec{k}}=\mu_{\vec{k}}(\Sigma)$ be any weak$^*$ subsequential limit of the $\mu_{\vec{k},m}(\Sigma)$. We note that \begin{equation}\label{eqn:weaklimit}\mathcal{I}_{\vec{k}}(\Sigma)=I(\mu_{\vec{k}}(\Sigma)),\end{equation} which is proved exactly as the Claim in the proof of \cite[Proposition 3.7]{Looper:UBCunicrit}. Write \begin{equation}\label{eqn:gammamuk}\gamma(\mu_{\vec{k}})=\textup{exp}(-I(\mu_{\vec{k}}(\Sigma)))=\textup{exp}(-\mathcal{I}_{\vec{k}}(\Sigma)).\end{equation} We introduce a proposition giving a uniform rate of convergence for the limit (\ref{eqn:limcaps}). It is key in the proof of Proposition \ref{prop:capacityconverge}.
	
	\begin{prop}\label{prop:weld1} Let $f(z)\in\CC_v[z]$ be monic of degree $d\ge2$ having bad reduction at $v\in M_K^0\setminus\mathscr{S}_d$, and let $\Sigma$ be a balanced $1$-structure with respect with to $f$ and $v$. Let $\vec{k}$ be as in (\ref{eqn:k}). Then for all $m\ge1$, \[-I(\mu_{\vec{k},m}(\Sigma))+I(\mu_{\vec{k},m+1}(\Sigma))\le \left(\frac{1}{d^{m-1}}-\frac{1}{d^m}\right)g_v.\]\end{prop}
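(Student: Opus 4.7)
The plan is to evaluate the increment $I(\mu_{\vec{k},m+1}(\Sigma))-I(\mu_{\vec{k},m}(\Sigma))$ by a two-part calculation: first split the energy over top-level components $\mathfrak{b}_{m_0,i}$, and then invoke the nonarchimedean pullback formula for capacity to compute each component's contribution.

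Set $E_i^{(m)}:=\mathfrak{b}_{m_0,i}\cap\mathcal{F}_m$. By the defining property of $\mu_{\vec{k},m}$, its restriction to $E_i^{(m)}$ equals $k_{m_0,i}$ times the equilibrium probability measure on $E_i^{(m)}$, and since $\delta_v(z,w)$ is constant whenever $z\in\mathfrak{b}_{m_0,i}$ and $w$ lies in a distinct top-level disk $\mathfrak{b}_{m_0,j}$ (a standard property of the Hsia kernel on disjoint nonarchimedean disks), the energy splits cleanly as
\[I(\mu_{\vec{k},m})=\sum_i k_{m_0,i}^2\bigl(-\log\gamma(E_i^{(m)})\bigr)+C(\Sigma,\vec{k}),\]
where $C(\Sigma,\vec{k})=\sum_{i\ne j}k_{m_0,i}k_{m_0,j}\bigl(-\log\delta_v(\mathfrak{b}_{m_0,i},\mathfrak{b}_{m_0,j})\bigr)$ depends only on $\Sigma$ and $\vec{k}$, not on $m$. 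Subtracting across levels gives
\[I(\mu_{\vec{k},m+1})-I(\mu_{\vec{k},m})=\sum_i k_{m_0,i}^2 a_i,\qquad a_i:=\log\gamma(E_i^{(m)})-\log\gamma(E_i^{(m+1)})\ge 0,\]
so the task reduces to estimating the per-component increments $a_i$.

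To evaluate $a_i$, I would factor $f^{m_0}(z)=\prod_\ell(z-\alpha_\ell)$ and group the zeros by the top-level component containing them, so that on $\mathfrak{b}_{m_0,i}$ one has $f^{m_0}=C_iF_i$ with $F_i$ monic of degree $d_i$ and $|C_i|$ constant on the disk. Since $f^{m_0}$ sends $\mathfrak{b}_{m_0,i}$ (of radius $r_i$) onto $\mathcal{E}$ (of radius $\exp(g_v)$), while $F_i$ maps $\mathfrak{b}_{m_0,i}$ onto the disk of radius $r_i^{d_i}$, one gets $|C_i|=\exp(g_v)/r_i^{d_i}$. Writing $E_i^{(m)}=F_i^{-1}(\mathcal{E}_{m-m_0}/C_i)$ and applying in succession the monic pullback formula $\gamma(F_i^{-1}(E))^{d_i}=\gamma(E)$, the scaling rule $\gamma(E/c)=\gamma(E)/|c|$, and the identity $\gamma(\mathcal{E}_{m-m_0})=\exp(g_v/d^{m-m_0})$ (from monicity of $f$), a short computation yields
\[\gamma(E_i^{(m)})=r_i\exp\!\Bigl(\tfrac{g_v}{d_i}\bigl(\tfrac{1}{d^{m-m_0}}-1\bigr)\Bigr),\qquad a_i=\frac{g_v(d-1)}{d_i\, d^{m+1-m_0}}.\]

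Combining gives
\[I(\mu_{\vec{k},m+1})-I(\mu_{\vec{k},m})=\frac{g_v(d-1)}{d^{m+1-m_0}}\sum_i\frac{k_{m_0,i}^2}{d_i},\]
and since $d_i\ge 1$ together with $k_{m_0,i}\in[0,1]$ and $\sum_i k_{m_0,i}=1$ yield $\sum_i k_{m_0,i}^2/d_i\le\sum_i k_{m_0,i}^2\le(\max_i k_{m_0,i})\sum_i k_{m_0,i}\le 1$, we obtain exactly the desired bound $d^{m_0}(1/d^m-1/d^{m+1})g_v$. The main obstacle will be carrying out the pullback step rigorously: in particular, handling multiplicities in the $m_0$-structure (coincident disks $\mathfrak{b}_{m_0,i}=\mathfrak{b}_{m_0,j}$ with their forced constraint $k_{m_0,i}=k_{m_0,j}$) requires careful bookkeeping of the local degrees $d_i$, and one must verify that within each $\mathfrak{b}_{m_0,i}$ the set $E_i^{(m)}$ really is the preimage of $\mathcal{E}_{m-m_0}$ under the local degree-$d_i$ restriction of $f^{m_0}$ so that the capacity pullback identity applies as stated.
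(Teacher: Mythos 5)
Your skeleton is essentially the paper's: split the energy of $\mu_{\vec{k},m}$ over the top-level disks $\mathfrak{b}_{m_0,i}$, note that the cross-terms $k_{m_0,i}k_{m_0,j}\bigl(-\log\delta_v(\mathfrak{b}_{m_0,i},\mathfrak{b}_{m_0,j})\bigr)$ are independent of $m$, and reduce the increment to the per-component capacity drops $a_i=\log\gamma(E_i^{(m)})-\log\gamma(E_i^{(m+1)})$. The paper then just bounds the increment by $\max_i a_i$ (via $\sum_i k_{m_0,i}^2\le 1$) and bounds each $a_i$ by $d^{m_0}\bigl(\log\gamma(\mathcal{F}_m)-\log\gamma(\mathcal{F}_{m+1})\bigr)=d^{m_0}\bigl(\tfrac{1}{d^m}-\tfrac{1}{d^{m+1}}\bigr)g_v$; your exact value $a_i=\tfrac{d^{m_0}}{d_i}\bigl(\tfrac{1}{d^m}-\tfrac{1}{d^{m+1}}\bigr)g_v$ agrees with this, and your final estimate $\sum_i k_{m_0,i}^2/d_i\le 1$ closes the argument, so the conclusion and the numerology are right.

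The genuine issue is \emph{how} you compute $\gamma(E_i^{(m)})$. Your derivation runs entirely through the dynamics: you write $E_i^{(m)}=F_i^{-1}(\mathcal{E}_{m-m_0}/C_i)$ and pull back capacity through the local degree-$d_i$ restriction of $f^{m_0}$. This presumes the underlying sets of the $\Sigma$-mesh are the actual preimages $f^{-m}(\mathcal{E})$. But the proposition concerns an arbitrary $m_0$-structure $\Sigma$, and $\mu_{\vec{k},m}(\Sigma)$ is built from an arbitrary $\Sigma$-mesh: the sets $\mathcal{F}_m$ are only constrained by nesting, the normalization $\log\gamma(\mathcal{F}_m)=g_v/d^m$, and the equilibrium-mass condition (\ref{eqn:levelmcap}); they need not be dynamical preimages, and $\Sigma$ need not be realized by any. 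The repair is to extract the same formula abstractly from Frostman's theorem: since the equilibrium potential of $\mathcal{F}_m$ equals $-\log\gamma(\mathcal{F}_m)$ on each component and the cross-kernel is constant between disjoint disks, one gets $\tfrac{d_i}{d^{m_0}}\bigl(-\log\gamma(E_i^{(m)})\bigr)=-\tfrac{g_v}{d^m}-\sum_{j\ne i}\tfrac{d_j}{d^{m_0}}\log\delta_v(\mathfrak{b}_{m_0,i},\mathfrak{b}_{m_0,j})$, and subtracting the equations for $m$ and $m+1$ kills the ($m$-independent) distance terms and yields exactly your $a_i$. This is the mechanism the paper leans on (it is also why $I(\mu_{\vec{k},m}(\Sigma))/g_v$ is determined by $\Sigma$ at all, per the discussion after (\ref{eqn:limcaps})). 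Your multiplicity worry is real but only bookkeeping: coincident disks force equal weights, and after grouping them into distinct components the same bound $\sum_I \tilde{k}_I^2/d_I\le 1$ goes through.
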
 
	
	\begin{proof}[Proof of Proposition \ref{prop:weld1}] Let $\{\mathcal{F}_m\}$ be a sequence of underlying sets associated to a $\Sigma$-mesh. Write $\vec{k}=(k_1,\dots,k_d)$ and \[\mathcal{F}_1=\bigcup_{i=1}^d\mathfrak{b}_{1,i},\] where whenever $i\ne j$, either $\mathfrak{b}_{1,i}=\mathfrak{b}_{1,j}$ or $\mathfrak{b}_{1,i}\cap\mathfrak{b}_{1,j}=\emptyset$. For each $1\le i\le d$, let $d_i$ be the number of indices $1\le j\le d$ for which $\mathfrak{b}_{1,i}=\mathfrak{b}_{1,j}$. For each $m\ge 1$, the following inequality results from (\ref{eqn:poteq}) (where the last few lines are explained afterward): 
		\begin{align*}-I(\mu_{\vec{k},m}(\Sigma))+I(\mu_{\vec{k},m+1}(\Sigma))=&\sum_{i=1}^d k_i\left(\sum_{\mathfrak{b}_{1,j}=\mathfrak{b}_{1,i}}k_j\log\gamma(\mathfrak{b}_{1,i}\cap\mathcal{F}_m)+\sum_{\mathfrak{b}_{1,j}\ne \mathfrak{b}_{1,i}}k_j \log\delta_v(\mathfrak{b}_{1,i},\mathfrak{b}_{1,j})\right)\\&-\sum_{i=1}^d k_i\left(\sum_{\mathfrak{b}_{1,j}=\mathfrak{b}_{1,i}}k_j\log\gamma(\mathfrak{b}_{1,i}\cap\mathcal{F}_{m+1})+\sum_{\mathfrak{b}_{1,j}\ne \mathfrak{b}_{1,i}}k_j \log\delta_v(\mathfrak{b}_{1,i},\mathfrak{b}_{1,j})\right)\\=&\sum_{i=1}^dk_i\left(\sum_{\mathfrak{b}_{1,j}=\mathfrak{b}_{1,i}}k_j\left(\log\gamma(\mathfrak{b}_{1,i}\cap\mathcal{F}_m)-\log\gamma(\mathfrak{b}_{1,i}\cap\mathcal{F}_{m+1})\right)\right) \\ \le&\sum_{i=1}^dk_i\left(\sum_{\mathfrak{b}_{1,j}=\mathfrak{b}_{1,i}}k_j\max_{1\le l\le d}\{\log\gamma(\mathfrak{b}_{1,l}\cap\mathcal{F}_m)-\log\gamma(\mathfrak{b}_{1,l}\cap\mathcal{F}_{m+1})\}\right) \\\le&\left(\sum_{i=1}^dk_i\right)^2\max_{1\le l\le d}\{\log\gamma(\mathfrak{b}_{1,l}\cap\mathcal{F}_m)-\log\gamma(\mathfrak{b}_{1,l}\cap\mathcal{F}_{m+1})\}\\=&\max_{1\le l\le d}\{\log\gamma(\mathfrak{b}_{1,l}\cap\mathcal{F}_m)-\log\gamma(\mathfrak{b}_{1,l}\cap\mathcal{F}_{m+1})\}\\\le&d\max_{1\le l\le d}\left\{\frac{d_l}{d}\biggl(\log\gamma(\mathfrak{b}_{1,l}\cap\mathcal{F}_m)-\log\gamma(\mathfrak{b}_{1,l}\cap\mathcal{F}_{m+1})\biggr)\right\}\\=&d\max_{1\le l\le d}\biggl\{\frac{d_l}{d}\log\gamma(\mathfrak{b}_{1,l}\cap\mathcal{F}_m)+\sum_{\mathfrak{b}_{1,j}\ne \mathfrak{b}_{1,l}}\frac{d_j}{d}\log\delta_v(\mathfrak{b}_{1,l},\mathfrak{b}_{1,j})\\&-\frac{d_l}{d}\log\gamma(\mathfrak{b}_{1,l}\cap\mathcal{F}_{m+1})-\sum_{\mathfrak{b}_{1,j}\ne \mathfrak{b}_{1,l}}\frac{d_j}{d}\log\delta_v(\mathfrak{b}_{1,l},\mathfrak{b}_{1,j})\biggr\}\\=&d\left(\log\gamma(\mathcal{F}_m)-\log\gamma(\mathcal{F}_{m+1})\right) \\=& d\left(\frac{1}{d^m}-\frac{1}{d^{m+1}}\right)g_v. \end{align*} Here, the second to last equality holds since for all $m\ge 1$, Frostman's Theorem (Theorem \ref{thm:Frostman}) along with (\ref{eqn:levelmcap}) implies that \[\frac{d_i}{d}\log\gamma(\mathfrak{b}_{1,i}\cap\mathcal{F}_m)+\sum_{\mathfrak{b}_{1,j}\ne \mathfrak{b}_{1,i}}\frac{d_j}{d}\log\delta_v(\mathfrak{b}_{1,i},\mathfrak{b}_{1,j})=\log\gamma(\mathcal{F}_m)\] for all $1\le i\le d$ (and similarly for $\mathcal{F}_{m+1}$), and the last equality follows from (\ref{eqn:cap}).\end{proof}
	
	For a nonempty finite set $T\subseteq\mathcal{E}_m$, let $\mu_{m,T}$ be the unique probability measure on $\mathcal{E}_m$ such that for each $i$, the measure $\mu_{m,T}$ restricts to a scalar multiple of the equilibrium measure on $\mathcal{B}_{m,i}$, and \begin{equation}\label{eqn:mumt}\mu_{m,T}(\mathcal{B}_{m,i})=\frac{|T\cap\mathcal{B}_{m,i}|}{|T|}.\end{equation} We use these measures, along with the following lemma, in the proof of Proposition \ref{prop:capacityconverge}. We remind the reader that the $\mathcal{B}_{m,i}$ are defined as the disk components of $\mathcal{E}_m:=f^{-m}(\mathcal{E})$, where $\mathcal{E}$ is the smallest disk containing $\mathcal{K}_v$. The following lemma will also be used in the proof of Proposition \ref{prop:capacityconverge}.
	
	\begin{lem}\label{lem:radiuslb}Let $v\in M_K^0\setminus\mathscr{S}_d$, let $f(z)\in\CC_v[z]$ be a monic polynomial of degree $d\ge2$ having bad reduction at $v$, and let $g_v$ be the splitting radius of $f$. Then for all $m\ge1$, \[\log\text{diam}(\mathcal{B}_{m,i})\ge-d^mg_v.\]\end{lem}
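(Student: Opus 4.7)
The plan is to prove this by induction on $m$, using the standard formula for the image of a disk under a monic polynomial together with the nesting $\mathcal{E}_m\subseteq\mathcal{E}$. For the base case $m=0$, observe that $\mathcal{B}_{0,1}=\mathcal{E}$ has $\log\textup{diam}(\mathcal{E})=g_v$. Since $f$ is monic, $\mathcal{K}_v$ has logarithmic capacity $1$, so its minimal containing disk has diameter at least $1$, giving $g_v\ge 0$ and hence $\log\textup{diam}(\mathcal{E})=g_v\ge -g_v=-d^0g_v$.

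For the inductive step, I would fix $\mathcal{B}_{m,i}=D(a,r)$, which maps onto some disk component $\mathcal{B}_{m-1,j'}$ of $\mathcal{E}_{m-1}$ with mapping degree $k$, $1\le k\le d$. The main input is the non-archimedean image-diameter identity for monic polynomials: writing $f(z)-f(a)=\prod_{j=1}^{d}(z-\beta_j)$, with $\beta_1,\dots,\beta_k$ the preimages of $f(a)$ lying in $\mathcal{B}_{m,i}$, one has
\[\textup{diam}(f(\mathcal{B}_{m,i}))=r^k\prod_{j=k+1}^{d}|a-\beta_j|_v.\]
Each $\beta_j$ is a preimage of $f(a)\in\mathcal{E}_{m-1}$, so $\beta_j\in\mathcal{E}_m$. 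Invoking the standard nesting $\mathcal{E}_m\subseteq\mathcal{E}$ for monic $f$ with bad reduction, both $a$ and each $\beta_j$ sit in the ultrametric disk $\mathcal{E}$ of diameter $e^{g_v}$, so $|a-\beta_j|_v\le e^{g_v}$. This yields $\textup{diam}(\mathcal{B}_{m-1,j'})\le r^ke^{g_v(d-k)}$, so that after taking logs and applying the inductive hypothesis $\log\textup{diam}(\mathcal{B}_{m-1,j'})\ge -d^{m-1}g_v$,
\[\log r\ge \frac{\log\textup{diam}(\mathcal{B}_{m-1,j'})-(d-k)g_v}{k}\ge -\frac{g_v(d^{m-1}+d-k)}{k}.\]
The desired bound $\log r\ge -d^mg_v$ then reduces to the elementary inequality $d^{m-1}+d-k\le k\cdot d^m$ for $1\le k\le d$, whose worst case $k=1$ is equivalent to $d^{m-1}(d-1)\ge d-1$, valid for all $m\ge 1$ and $d\ge 2$.

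The main technical ingredients the plan relies on are the image-diameter identity on non-archimedean disks and the inclusion $\mathcal{E}_m\subseteq\mathcal{E}$; both are classical in the non-archimedean dynamics literature (e.g.\ in Benedetto's book, from which the paper already cites Proposition~7.33 for the disk-component structure of $f^{-m}(\mathcal{E})$). The only subtlety I anticipate is the borderline case in which some $\beta_j$ satisfies $|a-\beta_j|_v=r$ (so that the inside/outside dichotomy is ambiguous), but this is handled using the convention that radii lie in $|\CC_v^\times|$ and does not affect the final bound.
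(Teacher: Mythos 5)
Your proof is correct and matches the approach the paper intends: the paper's own ``proof'' is only a pointer to the analogous lemma in the unicritical case (\cite[Lemma 3.3]{Looper:UBCunicrit}), and your induction via the image-diameter identity $\textup{diam}(f(D(a,r)))=\prod_j\max\{r,|a-\beta_j|_v\}$ together with the nesting $\mathcal{E}_m\subseteq\mathcal{E}$ (both contained in the cited Proposition 7.33 of Benedetto) supplies the details in the general monic setting. You also correctly flag and use the point that $g_v\ge 0$ (from $\gamma(\mathcal{K}_v)=1$ for monic $f$), which is needed to convert the elementary inequality $d^{m-1}+d-k\le kd^m$ into the stated lower bound.
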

	
	\begin{proof}The proof is parallel to that of \cite[Lemma 3.3]{Looper:UBCunicrit}. One may also prove this more simply by combining Frostman's Theorem (Theorem \ref{thm:Frostman}) with the fact that $\mu(\mathcal{B}_{m,i})=\frac{d_i}{d^m}$ for $\mu$ the equilibrium measure on $\mathcal{E}_m$ and $d_i$ an integer between $1$ and $d^m-1$.\end{proof} In particular, given $f(z)\in\CC_v[z]$ a monic polynomial of degree $d\ge2$ having bad reduction at $v\in M_K^0\setminus\mathscr{S}_d$ and splitting radius $g_v$, it follows from our definition of balanced $1$-structures and $\Sigma$-meshes, Lemma \ref{lem:radiuslb}, and the fact \cite[Theorem 1.2]{DeMarcoRumely} that for all $m\ge1$, \begin{equation}\label{eqn:capEm}\log\gamma(\mathcal{E}_m)=\frac{1}{d^m}g_v,\end{equation} that there is a balanced $1$-structure $\Sigma_1$ and mesh $\{\Sigma_m\}_{m=1}^\infty$ with underlying sets consisting of $\mathcal{E}_m$ for all $m\ge1$. 
	
	\begin{prop}\label{prop:capacityconverge} Let $\epsilon>0$, and let $d\ge2$. There exist constants $M=M(d,\epsilon)$ and $N=N(M,d,\epsilon)$ with the following property. Let $v\in M_K^0\setminus\mathscr{S}_d$, let $f(z)\in\CC_v[z]$ be a monic polynomial of degree $d\ge2$ having bad reduction at $v$ and splitting radius $g_v$, and let $\vec{k}$ be as in (\ref{eqn:k}). Let $\gamma(\mu_{\vec{k}})$ as in (\ref{eqn:gammamuk}) be associated to the natural mesh $\{\Sigma_m\}_{m=1}^\infty$ given by the underying sets $\{\mathcal{E}_m\}_{m=1}^\infty$. For each $m\ge M$, every $\vec{k}$-distributed set $T\subseteq\mathcal{E}_m\cap\CC_v$ of order $n\ge N$ satisfies \[d_v(T)\le\gamma(\mu_{\vec{k}})e^{\epsilon g_v},\] where $d_v$ is defined as in (\ref{eqn:dvdef}).\end{prop}
	
	\begin{proof} Let $m\ge1$, and let $\vec{k}$ be as in (\ref{eqn:k}). The key to the proof is the following claim.
		
		\textbf{Claim}: There is an $N'=N'(d,\epsilon,m)$ such that if $T\subseteq\mathcal{E}_m\cap\CC_v$ is a set of $n\ge N'$ elements $z_1,\dots,z_n\in\mathcal{E}_m$, then \begin{equation}\label{eqn:weld2}d_v(T)\le e^{(\epsilon/2)g_v}\textup{exp}(-I(\mu_{m,T})),\end{equation} where $\mu_{m,T}$ is as in (\ref{eqn:mumt}). 
		
		Proof of claim: Write $j_{m,i}=\mu_{m,T}(\mathcal{B}_{m,i})$ and \[\mathcal{E}_m=\bigcup_{i=1}^{s_m}\mathcal{B}_{m,i}\] for disjoint disks $\mathcal{B}_{m,i}$, and let \[r_m=\min_{1\le i\le s_m}\{\textup{diam}(\mathcal{B}_{m,i})\}.\] For any $w\in T\cap\mathcal{B}_{m,i}$, we have 
		\begin{equation}\label{ineq:diskdiam}\begin{split}\textup{exp}(-p_{\mu_{m,T}}(w))&\ge\left(\textup{diam}(\mathcal{B}_{m,i})\prod_{\substack{z\in T\\z\ne w}}\delta_v(z,w)\right)^{1/n}\\&\ge\left(r_m\prod_{\substack{z\in T\\z\ne w}}\delta_v(z,w)\right)^{1/n},\end{split}\end{equation} since the first inequality is an equality in the case maximizing the right-hand side, i.e., whenever for any $1\le i\le s_m$ and any distinct $z,w\in T\cap\mathcal{B}_{m,i}$, one has $\delta_v(z,w)=\textup{diam}(\mathcal{B}_{m,i})$. We rewrite (\ref{ineq:diskdiam}) as \begin{equation}\label{eqn:individualpotential} \left(\frac{1}{r_m}\right)^{1/n}\textup{exp}(-p_{\mu_{m,T}}(w))\ge \prod_{\substack{z\in T\\z\ne w}}\delta_v(z,w)^{1/n}.\end{equation} Taking the geometric mean of both sides over all $w\in T$ gives \begin{equation*} \left(\frac{1}{r_m}\right)^{1/n}\textup{exp}(-I(\mu_{m,T}))\ge d_v(T)^{(n-1)/n},\end{equation*} and hence, raising both sides to the $\left(n/(n-1)\right)$-th power, \begin{equation*} \left(\left(\frac{1}{r_m}\right)^{1/(n-1)}\textup{exp}(-I(\mu_{m,T}))^{1/(n-1)}\right)\textup{exp}(-I(\mu_{m,T}))\ge d_v(T).\end{equation*} As $\log r_m\ge-d^mg_v$ by Lemma \ref{lem:radiuslb}, and trivially $-I(\mu_{m,T})\le g_v$, taking $n\gg_{m,\epsilon}1$ proves the Claim (\ref{eqn:weld2}). On the other hand, Proposition \ref{prop:weld1} applied to our mesh $\{\Sigma_m\}_{m=1}^\infty$ and to our $\vec{k}$ implies that \begin{equation*}\begin{split}
				-I(\mu_{m,T}) &= -I(\mu_{\vec{k},m})
				= -I(\mu_{\vec{k}}) + \sum_{m'=m}^{\infty}
				\Big( -I(\mu_{\vec{k},m'}) +I(\mu_{\vec{k},m'+1}) \Big)
				\\
				&\leq \log\gamma(\mu_{\vec{k}}) + \frac{1}{d^{m-1}} g_v \\ &\leq \log\gamma(\mu_{\vec{k}}) + \frac{\epsilon}{2} g_v
		\end{split}\end{equation*} for all $m\gg_{\epsilon,d}1$, where the first inequality follows from Proposition \ref{prop:weld1}. Combining this with the Claim (\ref{eqn:weld2}) completes the proof.\end{proof}
	
	\begin{prop}\label{prop:uniformeq} Let $\epsilon>0$, and let $d\ge2$. There are integers $N_0=N_0(d,\epsilon)$, $m=m(d,\epsilon)\ge 1$, and a constant $\epsilon'=\epsilon'(d,\epsilon)>0$ with the following property. Let $v\in M_K^0\setminus\mathscr{S}_d$, let $f(z)\in\CC_v[z]$ be a monic polynomial of degree $d$ having bad reduction at $v$, let $g_v$ be the splitting radius of $f$, and let $\mathcal{E}$ be the smallest disk containing the filled Julia set $\mathcal{K}_v$ of $f$. If $T\subseteq f^{-m}(\mathcal{E})$ is a finite set with $|T|\ge N_0$, and $T$ is not $\epsilon$-equidistributed at $v$, then \[\log d_v(T)\le-\epsilon'g_v.\]
	\end{prop}
	
	\begin{proof} Let $\mathfrak{W}\subseteq[-d,1]^{d^2}$ be the set of balanced $1$-structures with respect to monic polynomials $f\in\CC_v[z]$ of degree $d$ having bad reduction at $v$. Let $\mathfrak{K}\subseteq[0,1]^d$ be the set of $d$-tuples as in (\ref{eqn:k}), except with the rationality condition on the $k_i$ omitted (so that the $k_i$ may be irrational reals). Let $\mathfrak{S}$ be the set of elements $(\Sigma,\vec{k})\in\mathfrak{W}\times\mathfrak{K}$ such that either: \begin{itemize} \item $\vec{k}$ has rational coordinates, and any $\vec{k}$-distributed set of points contained in an underlying set of $\Sigma$ fails to be $\epsilon$-equidistributed, or \item $(\Sigma,\vec{k})$ is a limit point of a sequence $(\Sigma,\vec{k}_i)$ such that the previous condition holds. \end{itemize} Referring back to the definition of $\epsilon$-equidistribution (\ref{eqn:epsilonequidistribution}), we see that $\mathfrak{S}$ is nonempty if and only if $\epsilon\le1$. If $\epsilon>1$, then clearly \textbf{no} set $T\subseteq f^{-m}(\mathcal{E})$ can fail to be $\epsilon$-equidistributed at $v$, so the proposition holds vacuously. Thus, we can assume that $\epsilon\le1$, so that $\mathfrak{S}$ is nonempty. Let \[\psi:\mathfrak{S}\to\mathbb{R}\] be given by \[\psi(\Sigma,\vec{k})=\frac{\mathcal{I}_{\vec{k}}(\Sigma)}{g_v};\] here we note that by the Key Fact given before (\ref{eqn:weaklimit}), $\psi$ is well-defined (and only depends on $d$ and $\epsilon$, not on $v$ or $f$) for all $(\Sigma,\vec{k})\in\mathfrak{S}$ with $\vec{k}\in\QQ^d$, and by the density of the rationals in the reals, the functions $\mathcal{I}_{\vec{k}}(\Sigma)$ defined earlier have a unique continuous extension to pairs $(\Sigma,\vec{k})$ where $\vec{k}$ has arbitrary real coordinates in $[0,1]$. We define $\psi$ on $\mathfrak{S}$ via this continuous extension.%%Note that we aren't claiming sequences of $1$-structures $\Sigma_i$ converge within $\Sigma$, which would require the closedness argued below. So continuity here is argued via ``small perturbations in domain lead to small perturbations in range," rather than a limit argument.%%
		
		\textbf{Claim}: $\mathfrak{S}$ is a compact subset of $\mathfrak{W}\times\mathfrak{K}$. 
		\newline
		
		Proof of claim: We begin with a subclaim: If $\{\Sigma^t\}_{t=1}^\infty$ is a convergent sequence of $d^2$-tuples satisfying (\ref{eqn:tuple})--(\ref{eqn:radii}) as well as (\ref{eqn:meshcondition}), with $m_0=1$, and if, for $\Sigma^t$ corresponding to $\mathcal{F}_1^t=\bigcup\mathfrak{b}_{1,i}^t$, we write \[r_i^t=\log\textup{diam}(\mathfrak{b}_{1,i}^t)\hspace{5mm}\forall i,\] and \[\delta_{ij}^t=\delta_v(\mathfrak{b}_{1,i}^t,\mathfrak{b}_{1,j}^t),\] then \begin{equation}\label{eqn:limitprop}\lim_{t\to\infty}\log\delta_{ij}^t=\lim_{t\to\infty}\max\{r_i^t,r_j^t\}\Longrightarrow\lim_{t\to\infty}r_j^t=\lim_{t\to\infty}r_i^t.\end{equation} We will prove this subclaim once the rest of the proof of Proposition \ref{prop:uniformeq} is finished. Assuming the truth of this statement, we see that any limit $\Sigma'$ of such a sequence $\{\Sigma^t\}_{t=1}^\infty$ whose terms satisfy (\ref{eqn:tuple})--(\ref{eqn:radii}) as well as (\ref{eqn:meshcondition}) must have underlying set of the form $\mathcal{F}_1'=\bigcup\mathfrak{b}_{1,i}'$ as in (\ref{eqn:Fm0})--(\ref{eqn:radii}). (In the absence of (\ref{eqn:limitprop}) holding for this sequence of $\Sigma^t$, we could a priori have some $\mathfrak{b}_{1,i}'$ properly contained in some $\mathfrak{b}_{1,j}'$, which we forbade in (\ref{eqn:Fm0}).) Clearly $\mathcal{F}_1'$ must satisfy (\ref{eqn:meshcondition}) as well.
		
		It follows that the set $\mathfrak{V}$ of $d^2$-tuples as in (\ref{eqn:tuple}) with $m_0=1$ that correspond to sets as in (\ref{eqn:Fm0}) satisfying (\ref{eqn:radii}) and (\ref{eqn:smallerinterval})--(\ref{eqn:meshcondition}) is closed in $[-d,1]^{d^2}$. Moreover, the subset $\mathfrak{Z}\subseteq\mathfrak{V}$ corresponding to underlying sets satisfying (\ref{eqn:admissible}) and (\ref{eqn:cap}) is clearly closed in $\mathfrak{V}$. But $\mathfrak{Z}$ is exactly $\mathfrak{W}$, so we conclude that $\mathfrak{W}$ is closed in the topology inherited from the Euclidean topology on $[-d,1]^{d^2}$. Hence $\mathfrak{W}\times\mathfrak{K}$ is compact.
		
		To finish proving the claim, it suffices to show that for any $(\Sigma,\vec{k})\in(\mathfrak{W}\times\mathfrak{K})\setminus\mathfrak{S}$, there is a neighborhood of $(\Sigma,\vec{k})$ in $\mathfrak{W}\times\mathfrak{K}$ that is contained in $(\mathfrak{W}\times\mathfrak{K})\setminus\mathfrak{S}$. To see this, fix $(\Sigma,\vec{k})\in(\mathfrak{W}\times\mathfrak{K})\setminus\mathfrak{S}$, and for any $\epsilon_1,\epsilon_2>0$, let $U_{\epsilon_1}$ denote an $\epsilon_1$-neighborhood of $\Sigma$ in $\mathfrak{W}$, and let $V_{\epsilon_2}$ denote an $\epsilon_2$-neighborhood of $\vec{k}$ in $\mathfrak{K}$. First note that since $\epsilon$-equidistribution is an open condition, there exists an $\epsilon_2>0$ such that $(\Sigma,\vec{k}_{\epsilon_2})\notin\mathfrak{S}$ for all $\vec{k}_{\epsilon_2}\in V_{\epsilon_2}$. We then claim that for any such $\epsilon_2>0$, there is an $\epsilon_1>0$ such that for every $\Sigma_{\epsilon_1}\in U_{\epsilon_1}$ and every $\vec{k}_{\epsilon_2}\in V_{\epsilon_2}$, one has $(\Sigma_{\epsilon_1},\vec{k}_{\epsilon_2})\notin\mathfrak{S}$. Concretely, we may choose any $\epsilon_1<\min_{i,j}\{1-r_{i,j}:r_{i,j}\ne1\}$, where the $r_{i,j}$ are the logarithmic radii as in (\ref{eqn:tuple}) associated to the $1$-structure $\Sigma$ (so that in fact $\epsilon_1$ does not depend on $\epsilon_2$). This proves that $\mathfrak{S}$ is closed in $\mathfrak{W}\times\mathfrak{K}$ (which we showed above is compact), and so we have proved the claim that $\mathfrak{S}$ is compact as well.
		
		Continuing with the proof of Proposition \ref{prop:uniformeq}, we assume that $\epsilon\le1$, so that $\mathfrak{S}$ is nonempty. As $\psi$ is a continuous function on a compact set, it must attain a minimum $\eta=\eta(d,\epsilon)$. Fix $(\Sigma,\vec{k})\in\mathfrak{S}$ be such that $\psi(\Sigma,\vec{k})=\eta$, and let $\{\Sigma_m\}_{m=1}^\infty$ be any mesh for $\Sigma$ with underlying sets $\{\mathcal{F}_m\}_{m=1}^\infty$. As before, write \[\mathcal{F}_1=\bigcup_{i=1}^d\mathfrak{b}_{1,i},\] where $i\ne j$ implies that either $\mathfrak{b}_{1,i}=\mathfrak{b}_{1,j}$ or $\mathfrak{b}_{1,i}\cap\mathfrak{b}_{1,j}=\emptyset$. Let $\mu_{\vec{k}}$ be any weak$^*$ subsequential limit of the measures $\mu_{\vec{k},m}(\Sigma)$. Let $\mathcal{T}=\bigcap_{m=1}^\infty\mathcal{F}_m$. Note that (\ref{eqn:levelmcap}) combined with (\ref{eqn:cap}) and (\ref{eqn:weaklimit}) implies that $0=\mathcal{I}_{\vec{l}}(\Sigma)=I(\mu_{\vec{l}})$ for $\vec{l}=(\frac{1}{d},\dots,\frac{1}{d})$ and $\mu_{\vec{l}}$ any weak$^*$ subsequential limit of the $\mu_{\vec{l},m}(\Sigma)$. Since $\textup{supp}(\mu_{l,m})$ is contained in $\mathcal{F}_m$, we also see that the support of such a $\mu_{\vec{l}}$ is contained in $\mathcal{T}$. Note moreover that (\ref{eqn:cap}) yields $\gamma(\mathcal{T})\le1$. From these observations and the uniqueness of the equilibrium measure, we see that any such $\mu_{\vec{l}}$ is in fact the equilibrium measure on $\mathcal{T}$. By construction, it satisfies \begin{equation}\label{eqn:eqdist}\mu_{\vec{l}}(\mathfrak{b}_{1,i}\cap\mathcal{T})=\frac{d_i}{d}\end{equation} for each $1\le i\le d$, where $d_i$ is the number of indices $1\le j\le d$ for which $\mathfrak{b}_{1,i}=\mathfrak{b}_{1,j}$. On the other hand, for all $m\ge 1$ and all $1\le i\le d$, we have \[\mu_{\vec{k},m}(\mathfrak{b}_{1,i}\cap\mathcal{F}_m)=\sum_{\mathfrak{b}_{1,j}=\mathfrak{b}_{1,i}}k_j,\] which implies that \begin{equation}\label{eqn:kdist}\mu_{\vec{k}}(\mathfrak{b}_{1,i}\cap\mathcal{T})=\sum_{\mathfrak{b}_{1,j}=\mathfrak{b}_{1,i}}k_j.\end{equation} But as $\vec{k}$ came from an ordered pair $(\Sigma,\vec{k})$ corresponding to a failure of $\epsilon$-equidistribution, we also know that there is some $i$ such that $\frac{d_i}{d}\ne\sum_{\mathfrak{b}_{1,j}=\mathfrak{b}_{1,i}}k_j$, meaning that there is some $i$ such that the right-hand sides of (\ref{eqn:eqdist}) and (\ref{eqn:kdist}) are unequal. By the uniqueness of the equilibrium measure on $\mathcal{T}$, it follows that \[0=I(\mu_{\vec{l}})< I(\mu_{\vec{k}})=\mathcal{I}_{\vec{k}}(\Sigma)=g_v\psi(\Sigma,\vec{k})=g_v\eta.\] We conclude that $\eta>0$. Recall that $\eta=\eta(d,\epsilon)$.
		
		Now let $f\in\CC_v[z]$ be any polynomial of degree $d$ having splitting radius $g_v>0$, and let $\mathcal{E}_m=f^{-m}(\mathcal{E})$ for $\mathcal{E}$ the smallest disk containing the filled Julia set $\mathcal{K}_v$ of $f$. We saw in the discussion around (\ref{eqn:capEm}) that there is a balanced $1$-structure $\Sigma_1$ and mesh $\{\Sigma_m\}_{m=1}^\infty$ with underlying sets $\mathcal{E}_m$ such that $\mathcal{K}_v$ is contained in $\mathcal{E}_m$ for all $m\ge1$. Hence by the definition of $\eta$, it follows that for any $\vec{k}$ for which the $\vec{k}$-distributed sets in $\mathcal{E}_1$ fail to be $\epsilon$-equidistributed, and for $\mu_{\vec{k}}$ any weak$^*$ subsequential limit of the $\mu_{\vec{k},m}(\Sigma_1)$, we have \[\gamma(\mu_{\vec{k}})\le e^{-\eta g_v}.\] Letting $\epsilon'=\eta/2$, we then have from Proposition \ref{prop:capacityconverge} that there is an $m=m(d,\epsilon')$ and an $N=N(d,\epsilon')$ such that for any $\vec{k}$-distributed set $T\subseteq\mathcal{E}_m$ of order $\ge N$, \[d_v(T)\le\gamma(\mu_{\vec{k}})e^{\epsilon'g_v}\le e^{-\eta g_v}e^{\epsilon'g_v}=e^{-\eta g_v/2}=e^{-\epsilon'g_v}.\] As noted previously, $\eta$ depends only on $d$ and on $\epsilon$, and hence the same holds for $\epsilon'$. This completes the proof, with the exception of the deferred proof of the subclaim, supplied below. The impatient reader may want to skip to Proposition \ref{prop:wtdcap}.\end{proof}
	
	\noindent\textbf{Proof of Subclaim (\ref{eqn:limitprop})}: Let all $\delta_{ij}^t$ and $r_i^t$ for $1\le i,j\le d$ be as in (\ref{eqn:limitprop}), where the superscripts of $t$ will be indices, not exponents. Define a binary relation on the integers between $1$ and $d$ by \[i\sim j\Longleftrightarrow\lim_{t\to\infty}\log\delta_{ij}^t=\lim_{t\to\infty}r_i^t.\] Note that $\sim$ is reflexive. Let $1\le i\le d$ be such that $\mathcal{X}_i:=\{1\le j\le d:i\sim j\}$ has order at least $2$; we may assume that such an $i$ exists, for otherwise by the reflexivity of $\sim$, Subclaim (\ref{eqn:limitprop}) is clear. By reordering indices if needed we may assume that $i=1$, and that $\mathcal{X}_1=\{1,2,\dots,q\}$. We have by the ultrametric inequality that $\delta^t_{jl}\leq\max\{\delta^t_{1j}, \delta^t_{1l}\}$ for all $1\le j,l\le d$, and hence for all $j\in\mathcal{X}_1$ and all $1\le l\le d$, we have
	
	\begin{equation}\label{eqn:diskinside}\begin{split}\lim_{t\to\infty}\delta_{1l}^t&=\lim_{t\to\infty}\max\{r_1^t,\delta_{1l}^t\}\\&=\lim_{t\to\infty}\max\{\delta_{1j}^t,\delta_{1l}^t\}\\&\ge\lim_{t\to\infty}\delta_{jl}^t,\end{split}\end{equation} where the second equality follows from the fact that $j\in\mathcal{X}_1$. For each $t$, let $\mu^t$ be the equilibrium measure on $\mathcal{F}_1^t$, with potential function $p_{\mu^t}$. For $x\in\mathfrak{b}_{1,1}^t$, condition (\ref{eqn:meshcondition}) with $m_0=1$ together with (\ref{eqn:prepoteq}) implies \[-p_{\mu^t}(x) = \int_{\mathcal{F}_1^t} \log \delta_v(x,w) \, d\mu_t(w)
	= \frac{1}{d} \sum_{l=1}^d \log \delta^t_{1l}
	= \frac{1}{d} \big( r_1^t + \log \delta_{12}^t + \cdots + \log \delta_{1d}^t \big),\] and analogously for $x\in\mathfrak{b}_{1,i}^t$ for $2\le i\le q$. (Note that this does \emph{not} assume that all $\mathfrak{b}_{1,i}^t$ have multiplicity $1$.) On the other hand, Frostman's Theorem (Theorem \ref{thm:Frostman}) says that $p_{\mu^t}(x)=p_{\mu^t}(y)$ for all $x,y\in\mathcal{F}_1^t$. We thus obtain the system of equations \begin{equation}\label{eqn:r1lim}\lim_{t\to\infty}\left(r_1^t+\log\delta_{12}^t+\dots+\log\delta_{1d}^t\right)\end{equation}\[\verteq\] \begin{equation}\label{eqn:r2lim}\lim_{t\to\infty}\left(\log\delta_{21}^t+r_2^t+\log\delta_{23}^t+\dots+\log\delta_{2d}^t\right)\end{equation}\[\verteq\]\[\vdots\]\[\verteq\] \begin{equation}\label{eqn:rqlim}\lim_{t\to\infty}\left(\log\delta_{q1}^t+\dots+\log\delta_{q(q-1)}^t+r_q^t+\log\delta_{q(q+1)}^t+\dots+\log\delta_{qd}^t\right).\end{equation}Applying (\ref{eqn:diskinside}) to the first pair of limits in our system, namely (\ref{eqn:r1lim}) and (\ref{eqn:r2lim}), we see that forcibly, \begin{equation*}\label{eqn:r1r2}\lim_{t\to\infty}r_2^t=\lim_{t\to\infty}\log\delta_{12}^t,\end{equation*} i.e., \[\lim_{t\to\infty}r_2^t=\lim_{t\to\infty}r_1^t.\] From this we obtain $\mathcal{X}_1=\mathcal{X}_2$ and the index $i=2$ analogue of (\ref{eqn:diskinside}), namely \begin{equation}\label{eqn:diskinside2}\lim_{t\to\infty}\delta_{2l}^t\ge\lim_{t\to\infty}\delta_{jl}^t\textup{ for all }j\in\mathcal{X}_2\textup{ and all }1\le l\le d.\end{equation} Working in a similar manner with (\ref{eqn:r2lim}) and the analogous expression for $r_3^t$ (if applicable) and applying (\ref{eqn:diskinside2}), one gets \[\lim_{t\to\infty}r_3^t=\lim_{t\to\infty}r_2^t.\] Continuing this way for all equations in (\ref{eqn:r1lim})--(\ref{eqn:rqlim}), we find that \begin{equation*}\label{eqn:limconclusion}\lim_{t\to\infty}r_i^t=\lim_{t\to\infty}r_j^t\end{equation*} for all $i,j\in\mathcal{X}_1$. This completes the proof of the subclaim.\qed

	\begin{prop}\label{prop:wtdcap} Let $\epsilon>0$ and let $d\ge 2$. There are positive constants $N=N(d,\epsilon)$, $\tau=\tau(d,\epsilon)$, and $\epsilon'=\epsilon'(d,\epsilon)$ with the following property. Let $v\in M_K^0\setminus\mathscr{S}_d$, let $f(z)\in\CC_v[z]$ be a monic polynomial of degree $d$ having bad reduction at $v$, and let $g_v>0$ be the splitting radius of $f$. Let $T\subseteq\CC_v$ be a finite set. If $n=|T|\ge N$ and \[\fr{1}{n}\sum_{P_i\in T}\hat{\lambda}_v(P_i)\le\tau g_v,\] then \begin{equation}\label{eqn:equid}\log d_v(T)\le\epsilon g_v;\end{equation} if moreover $T$ fails to be $\epsilon$-equidistributed, then \begin{equation}\label{eqn:nonequid}\log d_v(T)\le-\epsilon'g_v.\end{equation}\end{prop}
	
	\begin{proof} Later in the proof, we will fix $0<\zeta<1$ sufficiently small, and then
		an integer $m\geq 1$ sufficiently large to guarantee (among other things) that
		$d^{-\lfloor m/2 \rfloor} < \zeta$. Then we will set $\tau=d^{-m}$. With that comment in place, we first prove (\ref{eqn:nonequid}). Let $v$, $f$, and $g_v$ be as in the
		statement of the proposition, and let $T\subseteq\CC_v$ be a nonempty finite set, and suppose $m\ge1$ is such that \begin{equation}\label{eqn:avg}\fr{1}{|T|}\sum_{P_i\in T}\hat{\lambda}_v(P_i)\le\fr{1}{d^m}g_v.\end{equation} We partition $T$ into three subsets based on local canonical height: $T_1$ will correspond to `small height points,' and $T_2$ and $T_3$ will correspond to `large height points.' Most points will lie in $T_1$. The large points are subdivided into $T_2$ and $T_3$ because the relationship between the local canonical height of a point and its distance to the filled Julia set of $f$ differs based on whether a point lies in $\mathcal{E}$. Suppose $\zeta$ is such that \begin{equation}\label{eqn:mzeta} d^{-\lfloor m/2\rfloor}<\zeta<1.\end{equation} By (\ref{eqn:avg}), there is a subset $T_1\subseteq T$ of size at least $(1-\zeta)|T|$ such that for each $P_i\in T_1$, \[\hat{\lambda}_v(P_i)\le\fr{1}{\zeta d^m}g_v.\] Assume $T_1$ contains all such points. Our condition on $\zeta$ and $m$ tells us that \begin{equation}\label{eqn:T1cond} T_1\subseteq\mathcal{E}_{\lceil m/2\rceil}.\end{equation} Let $T'=T\setminus T_1$, let $T_2$ be the set of elements $P_i\in T'$ such that \[\hat{\lambda}_v(P_i)\le g_v\] (equivalently, $T_2=\mathcal{E}\cap T'$),
		and let $T_3=T'\setminus\mathcal{E}$ be the set of elements $P_i\in T'$ such that \[\hat{\lambda}_v(P_i)>g_v.\] For disjoint nonempty finite sets $A,B\subseteq\mathbf{A}_v^1$, write \[(A,B)=\sum_{x\in A}\sum_{y\in B}\log\delta_v(x,y),\] and let \[(A,A)=\frac{1}{2}\sum_{x\ne y\in A}\log\delta_v(x,y).\] Let $|T|=n$. Then since $T_1,T_2,T_3$ are pairwise disjoint, \begin{equation}\label{eqn:dvT}\log d_v(T)=\fr{1}{n(n-1)}\sum_{x\ne y\in T}\log\delta_v(x,y)=\fr{2}{n(n-1)}\sum_{1\le i\le j\le 3}(T_i,T_j).\end{equation} As $v\notin\mathscr{S}_d$ and hence $\gamma(\mathcal{E})=\textup{exp}(g_v)$, it follows from the fact that $T_1\cup T_2\subseteq\mathcal{E}$ that \[\fr{1}{(\max\{1,|T_2|\})(|T_1|+\frac{1}{2}(|T_2|-1))}\sum_{i=1}^2(T_2,T_i)\le g_v,\] and thus, since $|T_2|/n\le\zeta$, \begin{equation}\begin{split}\label{eqn:T2}\fr{1}{n(n-1)}\sum_{i=1}^2(T_2,T_i)&\le \zeta\cdot\left(\frac{1}{\max\{1,|T_2|\}}\right)\cdot\left(\frac{1}{n-1}\right)\sum_{i=1}^2(T_2,T_i)\\&\le\zeta\cdot\fr{1}{(\max\{1,|T_2|\})(|T_1|+\frac{1}{2}(|T_2|-1))}\sum_{i=1}^2(T_2,T_i)\\&\le\zeta g_v.\end{split}\end{equation} Moreover, by (\ref{eqn:avg}), \begin{equation}\label{eqn:T3avg}\begin{split}\frac{1}{n}\sum_{P_i\in T_3}\hat{\lambda}_v(P_i)\le \frac{1}{n}\sum_{P_i\in T}\hat{\lambda}_v(P_i)\le\frac{g_v}{d^m}.\end{split}\end{equation} By the monicity hypothesis on $f$ and the fact that $v\in M_K^0\setminus\mathscr{S}_d$, we have that each point $P$ in the set \[W=\{Q\in\CC_v\mid \hat{\lambda}_v(Q)>g_v\}\] is at distance exactly $\text{exp}(\hat{\lambda}_v(P))$ from every point in $\mathbb{C}_v\setminus W$ \cite[Lemmas 2.1 and 2.2]{Ingram:PCF}. This implies that for any $x\in T_3$ and any $y\in T$, \begin{equation}\label{eqn:deltaht} \log\delta_v(x,y)\le \max\{\hat{\lambda}_v(x),\hat{\lambda}_v(y)\}.\end{equation} Therefore, it follows from (\ref{eqn:avg}), (\ref{eqn:T3avg}) and (\ref{eqn:deltaht}) (and the fact that $|T_3|/n\le\zeta$) that \begin{equation}\label{eqn:T3}\begin{split}\frac{1}{n(n-1)}\sum_{i=1}^3(T_3,T_i)&\le \frac{1}{n(n-1)}\left[\left(\sum_{\substack{x\in T_3\\y\in T_1\cup T_2}}\hat{\lambda}_v(x)+\hat{\lambda}_v(y)\right)+\left(\sum_{\substack{x\in T_3\\y\in T_3}}\hat{\lambda}_v(x)+\hat{\lambda}_v(y)\right)\right]\\&=\frac{1}{n(n-1)}\left[|T_1\cup T_2|\sum_{x\in T_3}\hat{\lambda}_v(x)+|T_3|\sum_{y\in T_1\cup T_2}\hat{\lambda}_v(y)+2|T_3|\sum_{z\in T_3}\hat{\lambda}_v(z)\right]\\&\le\frac{1}{n-1}\left(\sum_{x\in T_3}\hat{\lambda}_v(x)+2|T_3|\left(\frac{1}{n}\sum_{y\in T}\hat{\lambda}_v(y)\right)\right)	\\&\le\frac{1}{n-1} \bigg(n\cdot\frac{g_v}{d^m} + 2|T_3| \frac{g_v}{d^m}\bigg)\\&\le\bigg(\frac{n(1+2\zeta) }{n-1}\bigg) \frac{g_v}{d^m}.\end{split}\end{equation} If $\zeta$ is sufficiently small (which can be accomplished without violating (\ref{eqn:mzeta}) by assuming that $m$ is sufficiently large), and if $T$ is not $\epsilon$-equidistributed, then $T_1$ fails to be $\epsilon/2$-equidistributed. (One readily works out that $\zeta=\epsilon/(2d)$ makes this implication hold.) By (\ref{eqn:T1cond}), it thus follows immediately from Proposition \ref{prop:uniformeq} that if $\zeta\ll_{d,\epsilon}1$ and $m\gg_{d,\epsilon,\zeta}1$, then there is an $\epsilon_2=\epsilon_2(d,\epsilon)>0$ such that if $n\gg_{m,d,\epsilon}1$, then \[\fr{1}{|T_1|(|T_1|-1)}(T_1,T_1)\le-\epsilon_2g_v,\] and so \begin{equation}\label{eqn:T1}\fr{1}{n(n-1)}(T_1,T_1)\le-\epsilon_2(1-2\zeta)^2g_v\end{equation} for $n\gg_{m,d,\epsilon,\zeta}1$. Combining (\ref{eqn:T2}), (\ref{eqn:T3}), and (\ref{eqn:T1}), we obtain \begin{equation*}\begin{split}\log d_v(T)&=\fr{1}{n(n-1)}\sum_{x\ne y\in T}\log\delta_v(x,y)\\&=\frac{2}{n(n-1)}\sum_{1\le i\le j\le 3}(T_i,T_j)\\&\le -\epsilon'g_v\end{split}\end{equation*} where \[ \epsilon' = \epsilon'(d,\epsilon) = 
		2\left(\epsilon_2(1-2\zeta)^2 - \zeta - \frac{(1+2\zeta) n}{(n-1)d^m}\right).\] Letting $\zeta\ll_{d,\epsilon}1$ and $m\gg_{d,\epsilon,\zeta}1$ (which, again, we can do without violating (\ref{eqn:mzeta})), as well as $n\gg_{d,\epsilon,m,\zeta}1$ completes the proof of (\ref{eqn:nonequid}), with $\tau=\frac{1}{d^m}$.  We emphasize that $\epsilon'$ may be assumed to be positive under these conditions.
		
		Turning to prove (\ref{eqn:equid}), we first note that by (\ref{eqn:capEm}), we have $-I(\mu_{m,T})<\epsilon g_v/4$ for $m\gg_{d,\epsilon}1$. Moreover, Claim (\ref{eqn:weld2}) and our condition (\ref{eqn:T1cond}) immediately imply that if $n\gg_{d,\epsilon,m}1$, then \[ \frac{2}{|T_1| ( |T_1| -1)} (T_1, T_1)
		= \log d_v(T_1) \leq \frac{\epsilon}{4} g_v - I(\mu_{m,T}).\] Therefore, for $m\gg_{d,\epsilon}1$ and $n\gg_{d,\epsilon,m}1$, \[\frac{2}{|T_1|(|T_1|-1)}(T_1,T_1)\le\frac{\epsilon}{2} g_v,\] and hence \[\frac{2}{n(n-1)}(T_1,T_1)\le\frac{\epsilon}{2} g_v\] for $m\gg_{d,\epsilon}1$ and $n\gg_{d,\epsilon,m}1$. Combining this with (\ref{eqn:T2}) and (\ref{eqn:T3}) and applying (\ref{eqn:dvT}) yields \begin{equation*}\begin{split} \log d_v(T)&=\fr{1}{n(n-1)}\sum_{x\ne y\in T}\log\delta_v(x,y)\\&=\frac{2}{n(n-1)}\sum_{1\le i,j\le3}(T_i,T_j)\\&\le2\bigg(\bigg(\frac{(1+2\zeta) n}{n-1}\bigg)\frac{1}{d^m}+\zeta+\frac{\epsilon}{4}\bigg)g_v,\end{split}\end{equation*} and so the proof of (\ref{eqn:equid}) is concluded by taking $\zeta\ll_{d,\epsilon}1$, $m\gg_{d,\epsilon,\zeta}1$, and $n\gg_{d,\epsilon,m}1$. Altogether, we thus see that (\ref{eqn:equid}) and (\ref{eqn:nonequid}) hold simultaneously for $\zeta\ll_{d,\epsilon}1$, $m\gg_{d,\epsilon,\zeta}1$, and $n\gg_{d,\epsilon,m,\zeta}1$.\end{proof}
	\begin{proof}[Proof of Theorem \ref{thm:globaleq2}] Let $\kappa>0$, let $\epsilon>0$, let $\xi\in (0,1)$, and without loss of generality (as a field of definition for the appropriate conjugation will have degree at most $d-1$ over $K$), suppose that $f$ is monic. Suppose $T\subseteq K$ is a nonempty finite set such that \begin{equation}\label{eqn:kappabound1}\frac{1}{|T|}\sum_{P_i\in T}\hat{h}_f(P_i)\le\kappa h_{\textup{crit}}(f)\end{equation} and that $f$ satisfies \begin{equation}\label{eqn:xibound}\sum_{v\in M_K^0\setminus\mathscr{S}_d}r_v\lambda_{\textup{crit},v}(f)\ge(1-\xi)h_{\textup{crit}}(f).\end{equation} We will show that if $\kappa\ll_{d,\epsilon}1, \xi\ll_{d,\epsilon}1$, and $\kappa/\xi\ll_{d,\epsilon}1$, then there exist $N$ and $M$ (independent of $f$ and $T$) as in the statement of the theorem. Let $S_1$ be the set of places $v\in M_K^0\setminus\mathscr{S}_d$ of bad reduction for $f$ such that \begin{equation}\label{eqn:kappabound2}\frac{1}{|T|}\sum_{P_i\in T}\hat{\lambda}_v(P_i)\le\frac{\kappa}{\xi}\lambda_{\textup{crit},v}(f),\end{equation} and let $S_2=M_K\setminus S_1$. Let $S_{1,1}$ be the set of places in $S_1$ where $T$ is $\epsilon$-equidistributed, and let $S_{1,2}=S_1\setminus S_{1,1}$. Let $S$ be the set of places $v\in M_K^0\setminus\mathscr{S}_d$ such that $T$ is not $\epsilon$-equidistributed at $v$ (so that $S_{1,2}=S_1\cap S$), and let $0\le\zeta\le1$ be minimal subject to the condition that \begin{equation}\label{eqn:deltabound}\sum_{v\in S}r_v\lambda_{\textup{crit},v}(f)\ge (1-\zeta)h_{\textup{crit}}(f).\end{equation} Our goal is to prove that given any $0<\delta<1$, if $\kappa,\xi,\kappa/\xi\ll_{d,\epsilon,\delta}1$, then $\zeta\ge\delta$; in other words, $M_K^0\setminus(\mathscr{S}_d\cup S)$ forms a $\delta$-slice $\Sigma$ of places of $M_K^0\setminus\mathscr{S}_d$ such that $T$ is $\epsilon$-equidistributed for all $v\in\Sigma$. From (\ref{eqn:kappabound2}), it follows that \begin{equation}\label{eqn:kappabound3} \sum_{v\in M_K^0\setminus(\mathscr{S}_d\cup S_1)} r_v\lambda_{\textup{crit},v}(f)\le\xi h_{\textup{crit}}(f);\end{equation} otherwise, by the definition of $S_1$ given in (\ref{eqn:kappabound2}), we obtain \begin{equation*}\begin{split} \sum_{v\in M_K^0\setminus(\mathscr{S}_d\cup S_1)} \frac{1}{|T|}\sum_{P_i\in T}r_v\hat{\lambda}_v(P_i) &> \frac{\kappa}{\xi}\sum_{v\in M_K^0\setminus(\mathscr{S}_d\cup S_1)}r_v\lambda_{\textup{crit},v}(f)\\&>\kappa h_{\textup{crit}}(f),\end{split}\end{equation*} contradicting (\ref{eqn:kappabound1}). Similarly, \begin{equation*} \sum_{v\in M_K^\infty\cup\mathscr{S}_d}r_v\lambda_{\textup{crit},v}(f)>\xi h_{\textup{crit}}(f)\end{equation*} contradicts (\ref{eqn:xibound}), so in fact \begin{equation}\label{eqn:xibound2}\sum_{v\in M_K^\infty\cup\mathscr{S}_d}r_v\lambda_{\textup{crit},v}(f)\le\xi h_{\textup{crit}}(f).\end{equation} Inequalities (\ref{eqn:kappabound3}) and (\ref{eqn:xibound2}) yield \begin{equation}\label{eqn:xiboundS2}\begin{split}\sum_{v\in S_2}r_v\lambda_{\textup{crit},v}(f)&=\sum_{v\in M_K^0\setminus(\mathscr{S}_d\cup S_1)}r_v\lambda_{\textup{crit},v}(f)+\sum_{v\in M_K^\infty\cup\mathscr{S}_d}r_v\lambda_{\textup{crit},v}(f)\\&\le2\xi h_{\textup{crit}}(f).\end{split}\end{equation} Combining this with (\ref{eqn:deltabound}) and partitioning $S$ as $(S_1\cap S)\cup(S_2\cap S)=S_{1,2}\cup(S_2\cap S)$ gives \begin{equation*}\begin{split}\sum_{v\in S_{1,2}}r_v\lambda_{\textup{crit},v}(f)&= \sum_{v\in S}r_v\lambda_{\textup{crit},v}(f)-\sum_{S_2\cap S}r_v\lambda_{\textup{crit},v}(f)\\&\ge(1-\zeta-2\xi)h_{\textup{crit}}(f).\end{split}\end{equation*} Let $\epsilon_1>0$. By Proposition \ref{prop:wtdcap}, if $\kappa/\xi\ll_{d,\epsilon_1}1$, $|T|\gg_{d,\epsilon_1}1$, then for all $v\in S_{1,1}$, \[\log d_v(T)\le\epsilon_1g_v\] and there is an $\epsilon_2=\epsilon_2(d,\epsilon)>0$ such that for all $v\in S_{1,2}$, \[\log d_v(T)\le-\epsilon_2g_v.\] From this we obtain (applying Equation (\ref{eqn:splittingradiusmonic}) to deduce the second equality) \begin{equation}\begin{split}\label{eqn:S1}0=\sum_{v\in M_K}r_v\log d_v(T)&\le\sum_{v\in S_{1,1}}\epsilon_1r_vg_v+\sum_{v\in S_{1,2}}-\epsilon_2r_vg_v+\sum_{v\in S_2}r_v\log d_v(T)\\&=\sum_{v\in S_{1,1}}\epsilon_1r_v\lambda_{\textup{crit},v}(f)+\sum_{v\in S_{1,2}}-\epsilon_2r_v\lambda_{\textup{crit},v}(f)+\sum_{v\in S_2}r_v\log d_v(T)\\&\le \epsilon_1h_{\textup{crit}}(f)-\epsilon_2(1-\zeta-2\xi)h_{\textup{crit}}(f)+\sum_{v\in S_2}r_v\log d_v(T).\end{split}\end{equation} On the other hand, since $f$ is monic, we have that for all $v\in M_K$, \begin{equation}\label{eqn:Ingram} \log\max\{1,|P_i-P_j|_v\}\le\hat{\lambda}_v(P_i)+\hat{\lambda}_v(P_j)+\lambda_{\textup{crit},v}(f)+C_v,\end{equation} where $C_v$ is an $M_K$-constant supported only on $v\in M_K^\infty\cup\mathscr{S}_d$ that depends only on $d$ \cite[Lemma 2.1]{Ingram:PCF}. Applying (\ref{eqn:Ingram}), (\ref{eqn:kappabound1}), and (\ref{eqn:xiboundS2}) respectively for each of the following three inequalities, we have  \begin{equation*}\label{eqn:S2}\begin{split}\sum_{v\in S_2}r_v\log d_v(T)&\le\sum_{v\in S_2}r_v\cdot\frac{1}{|T|}\sum_{P_i\in T}\left(2\hat{\lambda}_v(P_i)+\lambda_{\textup{crit},v}(f)\right)+\sum_{v\in S_2}r_vC_v\\&\le2\kappa h_{\textup{crit}}(f)+\sum_{v\in S_2}r_v\lambda_{\textup{crit},v}(f)+\sum_{v\in S_2}r_vC_v\\&\le2\kappa h_{\textup{crit}}(f)+2\xi h_{\textup{crit}}(f)+\eta\end{split}\end{equation*} for $\eta=\eta(d)$. Combining this with (\ref{eqn:S1}) gives \begin{equation}\label{eqn:S1conclusion} 0 \leq (\epsilon_1 - \epsilon_2(1-\zeta-2\xi) + 2\kappa + 2\xi) h_{\textup{crit}}(f) + \eta .\end{equation} To conclude the proof, fix $0<\delta<1$. By (\ref{eqn:S1conclusion}), we see that if $h_{\textup{crit}}(f)\gg_\eta1$ and $\epsilon_1,\kappa,\xi\ll_{\epsilon_2,\delta}1$, then $\zeta\ge\delta$, i.e., $M_K^0\setminus(S\cup\mathscr{S}_d)$ forms a $\delta$-slice $\Sigma$ of places in $M_K^0\setminus\mathscr{S}_d$ such that $T$ is $\epsilon$-equidistributed at all $v\in\Sigma$. As $\epsilon_2$ depends only on $d$ and $\epsilon$, we may indeed assume that $\epsilon_1\ll_{\epsilon_2,\delta}1$ while also assuming $\kappa/\xi\ll_{d,\epsilon}1$ and $|T|\gg_{d,\epsilon}1$. Moreover, noting that $f$ satisfies (\ref{eqn:xibound}) with any $\xi'\ge\xi$ replacing $\xi$, we also see that we may assume that $\kappa/\xi\to 0$ as $\kappa,\xi\to0$. This completes the proof.\end{proof}

	\section{An easy case}
	
	In this section we show that when one restricts consideration to maps $f\in K[z]$ satisfying certain local hypotheses, proving Theorems \ref{thm:UBCpolys} and \ref{thm:dynLangpolys} is relatively straightforward. In particular, given $s\in\ZZ_{>0}$ and $0<\xi<1$, if a map $f\in K[z]$ of degree $d\ge2$ has a set of at most $s$ places that are either bad or archimedean, and contribute a total of at least $\xi h_{\textup{crit}}(f)$ to $h_{\textup{crit}}(f)$, then $f$ has at most $B=B(d,s,\xi)$ preperiodic points contained in $K$. We apply this idea to the special case where the set of places in question is $M_K^\infty\cup\mathscr{S}_d$. Combining this proposition with Theorem \ref{thm:globaleq2} leads to a much more natural form of Theorem \ref{thm:globaleq2}, where the conditions involving $\xi$ and $M$ are dropped. This corresponds to Theorem \ref{thm:globaleq}.
	
	\begin{prop}\label{prop:Mxi} Let $d\ge2$, let $0<\xi<1$, let $s$ be a positive integer, and let $M\in\mathbb{R}_{>0}$. There are constants $N=N(d,[K:F],\xi,s,M)$ and $\kappa=\kappa(d,[K:F],\xi,s,M)>0$ such that if $f(z)\in K[z]$ is a polynomial of degree $d\ge2$ (assumed to be non-isotrivial if $K$ is a function field), $S$ is a finite set of places of $M_K$ with $|S|=s$, and $T\subseteq K$ is a finite set such that $|T|\ge N$ and \[\frac{1}{|T|}\sum_{P_i\in T}\hat{h}_f(P_i)\le\kappa\max\{1,h_{\textup{crit}}(f)\},\] then $h_{\textup{crit}}(f)\ge M$ and \[\sum_{v\in M_K\setminus S}r_v\lambda_{\textup{crit},v}(f)\ge(1-\xi)h_{\textup{crit}}(f).\]\end{prop} To prove Proposition \ref{prop:Mxi}, we require a result governing the minimal distance of a point of small local canonical height to a point of $f^{-4}(0)$, assuming $f$ takes an appropriate form.
	
	\begin{prop}\cite[Corollary 3.4, Proposition 4.3]{Looper:mincanht}\label{prop:squish} Let $d\ge2$, and let $\epsilon\ge0$. Suppose that $K$ is either a number field or a function field $K/k(t)$ where $\textup{char}(K)=0$ or $\textup{char}(K)>d$. There is a constant $\eta=\eta(d,\epsilon)$ with the following property. Let $f(z)\in K[z]$ be a monic polynomial of degree $d$, and let $v\in M_K$. Suppose $\beta\in\mathcal{K}_v$ and $\alpha\in\CC_v$. Then $\alpha\in {D(\beta,(1+\epsilon)e^{\lambda_{\textup{crit},v}(f)})}$ implies that every $y\in f^{-4}(\alpha)$ satisfies \[\min_{\gamma\in f^{-4}(\beta)}\log|y-\gamma|_v\le-\frac{1}{d-1}\lambda_{\textup{crit},v}(f)+\eta.\]\end{prop}
	
	\begin{rmk}The normal form for $f$ used in \cite{Looper:mincanht} has $\beta=0$ as a fixed point of $f$, and lead coefficient $1/d$. However, the proof equally applies to the more general case of $\beta\in\mathcal{K}_v$, and the change in the lead coefficient can be absorbed into $\eta$. We also note that in \cite{Looper:mincanht}, the radius of the disk containing $\alpha$ is $e^{\lambda_{\textup{crit},v}(f)}$ rather than $(1+\epsilon)e^{\lambda_{\textup{crit},v}(f)}$, and one considers $f^{-3}(\beta)$ rather than $f^{-4}(\beta)$. The change from $f^{-3}(\beta)$ to $f^{-4}(\beta)$ accommodates the change from radius $e^{\lambda_{\textup{crit},v}(f)}$ to radius $(1+\epsilon)e^{\lambda_{\textup{crit},v}(f)}$, as $(1+\epsilon)e^{\lambda_{\textup{crit},v}(f)}<e^{d\lambda_{\textup{crit},v}(f)}$ when $\lambda_{\textup{crit},v}(f)$ is sufficiently large. \end{rmk}
	
	\begin{proof}[Proof of Proposition \ref{prop:Mxi}] Let $\epsilon\ge 0$, let $d\ge2$, let $\kappa>0$, let $0<\xi<1$, let $s\ge 1$ be an integer, let $M\in\mathbb{R}_{>0}$, and without loss of generality, assume $f$ is monic. Suppose $T\subseteq K$ is a finite set such that \begin{equation}\label{eqn:kappahyp}\frac{1}{|T|}\sum_{P_i\in T}\hat{h}_f(P_i)\le\kappa\max\{1, h_{\textup{crit}}(f)\}.\end{equation} We first aim to show that there is an $N_1=N_1(\xi,s,d,[K:F])$ such that if $\kappa\ll_{\xi,s,d,[K:F]}1$ and \begin{equation}\label{eqn:xihyp}\sum_{v\in S}r_v\lambda_{\textup{crit},v}(f)\ge\xi h_{\textup{crit}}(f)\end{equation} for some nonempty set $S$ of places of $K$ with $|S|=:s$, then $|T|\le N_1$. We will then show that there is an $N_2=N_2(M,[K:F],d)$ such that if $\kappa\ll_{M,[K:F],d}1$ and $h_{\textup{crit}}(f)\le M$, then $|T|\le N_2$. Taking $N=\max\{N_1,N_2\}$ for the given $s,\xi,M,d,[K:F]$ then completes the proof of Proposition \ref{prop:Mxi}.
		
		By (\ref{eqn:xihyp}), there is a $v_0\in S$ such that \[r_{v_0}\lambda_{\textup{crit},v_0}(f)\ge\frac{\xi}{s}h_{\textup{crit}}(f).\] Let $S_0=M_K^\infty\cup\mathscr{S}_d\cup\{v_0\}$, with $|S_0|=s_0$. A fortiori, \begin{equation}\label{eqn:s0}\sum_{v\in S_0}r_v\lambda_{\textup{crit},v}(f)\ge\frac{\xi}{s}h_{\textup{crit}}(f).\end{equation}

		\textbf{Claim}: Let $0<\epsilon'<\xi/(ss_0)$. There is a constant $\kappa'=\kappa'(\epsilon',[K:F])>0$ and a partition $S_0=V_0\sqcup W_0$ of $S_0$ such that: \begin{enumerate}[label=(\roman*), ref=(\roman*)] \item\label{eqn:item1} $V_0\ne\emptyset$ \item\label{eqn:item2} For all $v\in V_0$ and all $\alpha\in K$ with $\hat{h}_f(\alpha)\le\kappa' h_{\textup{crit}}(f)$, \begin{equation}\label{eqn:belongtodisk}\hat{\lambda}_v(\alpha)\le \lambda_{\textup{crit},v}(f).\end{equation} \item\label{eqn:item3} For all $v\in W_0$, \begin{equation}\label{eqn:epsilon'} \lambda_{\textup{crit},v}(f)\le\epsilon' h_{\textup{crit}}(f).\end{equation}\end{enumerate}

		Proof of Claim: We claim that we may choose any $0<\kappa'\le\epsilon'/[K:F]$ in order to produce such a $V_0$ and $W_0$. Fix $0<\epsilon'<\xi/(ss_0)$, and let \[W_0=\{v\in S_0:\lambda_{\textup{crit},v}(f)\le\epsilon'h_{\textup{crit}}(f)\}.\] Let $\alpha\in K$. Suppose that $v\in V_0:=S_0\setminus W_0$. If $0<\kappa'\le\epsilon'/[K:F]$, then \[\hat{\lambda}_v(\alpha) \leq [K:F] r_v \hat{\lambda}_v(\alpha) \leq [K:F] \hat{h}_f(\alpha)
		\leq [K:F] \kappa' h_{\mathrm{crit}}(f) \leq \epsilon' h_{\mathrm{crit}}(f) < \lambda_{\mathrm{crit},v}(f) \]
		for all $\alpha\in K$ such that $\hat{h}_f(\alpha) \leq \kappa' h_{\mathrm{crit}}(f)$. Note that the final inequality holds because $v\not\in W_0$. This proves (\ref{eqn:belongtodisk}).

		Finally, the fact that $V_0=S_0\setminus W_0\ne\emptyset$ is immediate from (\ref{eqn:s0}) and the inequality $\epsilon'<\frac{\xi}{ss_0}$, since $W_0=S_0$ forces \begin{equation*}\frac{\xi}{s}h_{\textup{crit}}(f)=\sum_{v\in S_0}\frac{\xi}{ss_0}h_{\textup{crit}}(f)>\sum_{v\in S_0}\epsilon'h_{\textup{crit}}(f)\ge\sum_{v\in S_0}r_v\epsilon'h_{\textup{crit}}(f)\ge\sum_{v\in S_0}r_v\lambda_{\textup{crit},v}(f)\ge\frac{\xi}{s}h_{\textup{crit}}(f),\end{equation*} a contradiction. This proves the desired claim.\qed

		Returning to the proof of Proposition \ref{prop:Mxi}, suppose that $h_{\textup{crit}}(f)\ge1$. (We will address that case where $h_{\textup{crit}}(f)$ is bounded, and hence the case where $h_{\textup{crit}}(f)<1$, later.) Let $0<\epsilon'<\xi/(ss_0)$, from which it follows both that $\epsilon'<\frac{\xi}{s}h_{\mathrm{crit}}(f)$ and that we may let $\kappa'=\kappa'(\epsilon',[K:F])>0$ and $V_0,W_0$ be as in the Claim. Assume without loss that $\kappa$ as in (\ref{eqn:kappahyp}) satisfies \begin{equation}\label{eqn:kappa'} 2\kappa\le\kappa'.\end{equation} Clearly, at least $|T|/2$ elements $\alpha\in T$ must satisfy \begin{equation}\label{eqn:htbdkapprime}\hat{h}_f(\alpha)\le2\kappa h_{\textup{crit}}(f)\le\kappa'h_{\textup{crit}}(f),\end{equation} for otherwise, as $h_{\textup{crit}}(f)\ge1$, (\ref{eqn:kappahyp}) is violated. Let \begin{equation}\label{eqn:T'} T'=\{\alpha\in T:\hat{h}_f(\alpha)\le2\kappa h_{\textup{crit}}(f).\}\end{equation} By the Claim, (\ref{eqn:belongtodisk}) holds for all $\alpha\in T'$ and all $v\in V_0$. We further note that by \cite[Lemma 2.1]{Ingram:PCF} and the fact that we have assumed without loss that $f$ is monic, there is an $M_K$-constant $\{C_v'\}_{v\in M_K}$ depending only on $d$ such that for any $v\in M_K$, $\hat{\lambda}_v(\alpha_1),\hat{\lambda}_v(\alpha_2)\le\lambda_{\textup{crit},v}(f)$ implies that \begin{equation}\label{eqn:distance}\log|\alpha_1-\alpha_2|_v\le\lambda_{\textup{crit},v}(f)+C_v'.\end{equation} Thus if $\epsilon\ge0$ is such that $\log(1+\epsilon)>\max_{v\in M_K}C_v'$, then for any $v\in M_K$ and any $\beta_v\in\mathcal{K}_v$, the closed disk ${D\left(\beta_v,(1+\epsilon)e^{\lambda_{\textup{crit},v}(f)}\right)}$ in $\CC_v$ contains the set \[\mathcal{E}=\{\alpha\in\CC_v:\hat{\lambda}_v(\alpha)\le \lambda_{\textup{crit},v}(f)\}.\] Fix such an $\epsilon=\epsilon(d)>0$ (taken to be positive here for easy reuse in (\ref{eqn:notS0})). Let $W=f^{-4}(T')$, and let $v\in V_0$, so that by (\ref{eqn:htbdkapprime})/(\ref{eqn:T'}) and the definition of $V_0$, $T'\subseteq\mathcal{E}$. Fix $\beta\in\mathcal{K}_v$. By Proposition \ref{prop:squish} applied to $\epsilon$, we have that for any $\alpha\in T'$ and any $y\in f^{-4}(\alpha)$, there is a $\gamma_y\in f^{-4}(\beta)$ such that \[\log|y-\gamma_y|_v\le-\frac{1}{d-1}\lambda_{\textup{crit},v}(f)+\eta,\] where $\eta=\eta(d,\epsilon)$. Assume that each $\gamma_y$ is chosen to have minimal distance from $y$ over the various $\gamma\in f^{-4}(\beta)$. For each $\gamma_i\in f^{-4}(\beta)$, write $\mathfrak{U}_i=\{y\in W:\gamma_y=\gamma_i\}$. As there are at most $d^4$ such sets $\mathfrak{U}_i$, whose union is $W$, the pigeonhole principle implies that there is some $i$ such that $|\mathfrak{U}_i|\ge|W|/d^4$. Since each $y\in\mathfrak{U}_i$ lies at distance at most \begin{equation*}\textup{exp}\left(\eta-\frac{1}{d-1}\lambda_{\textup{crit},v}(f)\right)\end{equation*} from $\gamma_i$, $\mathfrak{U}_i$ is contained in a disk of that radius in $\mathbb{C}_v$. Write $U_1=\mathfrak{U}_i$ for this $i$, so that we have \begin{equation*}\label{eqn:U1size}|U_1|\ge |W|/d^4.\end{equation*} Note that since $W=f^{-4}(T')$ and in general $\hat{h}_f(f(x))=d\hat{h}_f(x)$, inequalities (\ref{eqn:kappa'}) and (\ref{eqn:T'}) imply that for each $\alpha\in W$,  \begin{equation}\label{eqn:U1htbd}\hat{h}_f(\alpha)\le\frac{2\kappa}{d^4}h_{\textup{crit}}(f)<\kappa'h_{\mathrm{crit}}(f).\end{equation} In particular, (\ref{eqn:U1htbd}) holds for each $\alpha\in U_1$, from which it follows by the Claim that (\ref{eqn:belongtodisk}) holds at any $v'\in V_0$. We then apply Proposition \ref{prop:squish} again to $U_1$ (in lieu of $W$) and to any $v'\ne v\in V_0$ (supposing such a place exists) in order to obtain a subset $U_2\subseteq U_1$ with \begin{equation}\label{eqn:U2size}|U_2|\ge |U_1|/d^4\end{equation} such that $U_2$ is contained in a disk of radius at most \begin{equation}\label{eqn:U2}\textup{exp}\left(\eta-\frac{1}{d-1}\lambda_{\textup{crit},v'}(f)\right)\end{equation} in $\CC_{v'}$, and a fortiori $\hat{h}_f(\alpha)<2\kappa h_{\mathrm{crit}}(f)\le\kappa'h_{\textup{crit}}(f)$ for all $\alpha\in U_2$. Continuing in this manner for each $v\in V_0$, we obtain a set $U=U_{|V_0|}\subseteq W$ with \begin{equation}\label{eqn:Usize}|U|\ge |W|/d^{4|V_0|}\ge|T'|/d^{4|V_0|}\ge|T|/2(d^{4|V_0|})\end{equation} such that for all $v\in V_0$, $U$ is contained in a disk of radius at most \begin{equation}\label{eqn:Unrad}\textup{exp}\left(\eta-\frac{1}{d-1}\lambda_{\textup{crit},v}(f)\right)\end{equation} in $\CC_v$, and \begin{equation}\label{eqn:kappahyp2}\hat{h}_f(\alpha)<2\kappa h_{\textup{crit}}(f)\end{equation} for all $\alpha\in U$.

		On the other hand, we trivially have by (\ref{eqn:kappahyp2}) that for all $\alpha\in U$ and all $v\in W_0$, \[\hat{\lambda}_v(\alpha)<2\kappa h_{\textup{crit}}(f)/r_v,\] from which it follows by combining with (\ref{eqn:Ingram}) and (\ref{eqn:epsilon'}) that for all $\alpha_1\ne\alpha_2\in U$ and all $v\in W_0$, \begin{equation}\label{eqn:W0}\begin{split}\log\max\{1,|\alpha_1-\alpha_2|_v\}&\le\hat{\lambda}_v(\alpha_1)+\hat{\lambda}_v(\alpha_2)+\lambda_{\textup{crit},v}(f)+C_v\\&<4\kappa h_{\textup{crit}}(f)/r_v+\lambda_{\textup{crit},v}(f)+C_v\\&\le (4\kappa/r_v+\epsilon')h_{\textup{crit}}(f)+C_v.\end{split}\end{equation}

		Now let $v\in M_K\setminus S_0$ be a place of bad reduction for $f$. Since \[g_v=\lambda_{\textup{crit},v}(f)\le [K:F] h_{\textup{crit}}(f)\] (recall that we have assumed without loss that $f$ is monic), Proposition \ref{prop:wtdcap} says that there are constants $N_0=N_0(d,\epsilon)\ge2$ and $\tau=\tau(d,\epsilon)$ such that if $|U|\ge N_0$ and the $\kappa$ in (\ref{eqn:kappahyp}) satisfies $\kappa\le\tau$, then we have \begin{equation}\label{eqn:notS0}\log d_v(U)\le\epsilon\lambda_{\textup{crit},v}(f).\end{equation} Assume without loss that $\kappa\le\tau$ and $|U|\ge N_0$.
		
		Finally, if $v\in M_K\setminus S_0$ is a place of good reduction for $f$, then again by the monicity of $f$, there is a $\beta\in\CC_v$ such that $\log\max\{1,|x-\beta|_v\}=\hat{\lambda}_v(x)$ for all $x\in\CC_v$. This implies (as $|U|\ge2$) that \begin{align*}\sum_{\substack{v\in M_K\setminus S_0\\ v\textup{ good}}}r_v\log d_v(U)&=\sum_{\substack{v\in M_K\setminus S_0\\ v\textup{ good}}}r_v\log\prod_{P_i\ne P_j\in U}|P_i-P_j|_v^{\frac{1}{|U|(|U|-1)}}\stepcounter{equation}\tag{\theequation}\label{eqn:gooodcase}\\&=\sum_{\substack{v\in M_K\setminus S_0\\ v\textup{ good}}}r_v\log\prod_{P_i\ne P_j\in U}|(P_i-\beta)-(P_j-\beta)|_v^{\frac{1}{|U|(|U|-1)}}\stepcounter{equation}\tag{\theequation}\label{eqn:gooodcase}\\&\le\sum_{\substack{v\in M_K\setminus S_0\\ v\textup{ good}}}r_v\log\prod_{P_i\ne P_j\in U}\max\{|P_i-\beta|_v,|P_j-\beta|_v\}^{\frac{1}{|U|(|U|-1)}}\\&\le\sum_{\substack{v\in M_K\setminus S_0\\ v\textup{ good}}}r_v\log\prod_{P_i\ne P_j\in U}(\max\{1,|P_i-\beta|_v\}\max\{1,|P_j-\beta|_v\})^{\frac{1}{|U|(|U|-1)}}\\&=\sum_{\substack{v\in M_K\setminus S_0\\ v\textup{ good}}}r_v\log\left(\prod_{P_i\in U}(\max\{1,|P_i-\beta|_v\}^2)^{1/|U|}\right)\\&=\sum_{\substack{v\in M_K\setminus S_0\\ v\textup{ good}}}\frac{2}{|U|}\sum_{P_i\in U}r_v\log\max\{1,|P_i-\beta|_v\}\\&=\frac{2}{|U|}\sum_{\substack{v\in M_K\setminus S_0\\ v\textup{ good}}}\sum_{P_i\in U}r_v\hat{\lambda}_v(P_i)\\&<4\kappa h_{\textup{crit}}(f),\end{align*} where the final inequality follows from (\ref{eqn:kappahyp2}).
		
		Combining (\ref{eqn:Unrad}) and (\ref{eqn:W0})--(\ref{eqn:gooodcase}), we obtain \begin{equation}\label{eqn:upbd}\begin{split}\sum_{v\in M_K}r_v\log d_v(U)<&\sum_{v\in V_0}r_v\left(-\frac{1}{d-1}\lambda_{\textup{crit},v}(f)+\eta+\log2\right)\\&+\sum_{v\in W_0}r_v(4\kappa/r_v+\epsilon')h_{\textup{crit}}(f)\\&+\sum_{v\in W_0}r_vC_v+\sum_{\substack{v\in M_K\setminus S_0\\ v\textup{ bad}}}r_v\epsilon\lambda_{\textup{crit},v}(f)\\&+4\kappa h_{\textup{crit}}(f).\end{split}\end{equation} Setting $C=\max_{v\in M_K}C_v$, inequalities (\ref{eqn:s0}) and (\ref{eqn:upbd}) yield the inequality \begin{equation}\label{eqn:finalineq}\begin{split}\sum_{v\in M_K}r_v\log d_v(U)<&\left(\left(-\frac{1}{(d-1)}\right)\frac{\xi}{s}+|W_0|(4\kappa+\epsilon')+\epsilon\right)h_{\textup{crit}}(f)\\&+|V_0|(\eta+\log2)+|W_0|C+4\kappa h_{\textup{crit}}(f)\\\le&\left(\left(-\frac{1}{(d-1)}\right)\frac{\xi}{s}+s\left(8\kappa+\epsilon'\right)+\epsilon\right)h_{\textup{crit}}(f)\\&+s(\eta+\log2+C),\end{split}\end{equation} the right-hand side of which is negative if $\kappa,\epsilon,\epsilon'\ll_{\xi,s,d}1$ and $h_{\textup{crit}}(f)\gg_{s(\eta+\log2+C)}1$, contradicting the product formula. We highlight that $\epsilon$ and $\epsilon'$ may be assumed to be arbitrarily small from the outset, and then $\kappa$ is chosen subsequently to satisfy $\kappa\ll_{\epsilon,\epsilon',\xi,s,d}1$. Moreover, by (\ref{eqn:Usize}), $|T|$ may be assumed to be large enough (in terms of $d$ and $|V_0|$) to ensure $|U|\ge N_0\ge 2$, making all inequalities from (\ref{eqn:notS0}) onward hold. Therefore under these conditions, the contradiction that we have reached implies that there is an $N_{1,1}=N_{1,1}(d,|V_0|)=N_{1,1}(s,d,[K:F])$ such that $|T|\le N_{1,1}$.

		All that is left to address is the case where $h_{\textup{crit}}(f)$ is bounded, say by $M'$. When $K$ is a number field, Northcott's Theorem along with the fact that $h_{\textup{crit}}(f)\asymp h_{\mathcal{M}_d}(f)$ by \cite{Ingram:critmod} (as discussed in the lines following Theorem \ref{thm:dynLangpolys}) immediately implies that there is an $N_{1,2}=N_{1,2}(M',[K:F],d)$ such that if \[\mathscr{F}(K,M',d)=\{f\in K[z]:f\textup{ has degree } d\textup{ and } h_{\textup{crit}}(f)\le M'\}\] and \[\mathcal{A}(K,M',d)=\sup_{f\in\mathscr{F}(K,M',d)}|\{\alpha\in K:\hat{h}_f(\alpha)\le\max\{1,h_{\textup{crit}}(f)\}|,\] then $\mathcal{A}(K,M',d)\le N_{1,2}$. Since we may assume without loss that $\kappa\le1$, this concludes the proof of the existence of $N_1=\max\{N_{1,1},N_{1,2}\}$ as in the sentence containing (\ref{eqn:xihyp}) when $K$ is a number field. Now suppose $K/k(t)$ is a function field (of characteristic either $0$ or $>d$), and that $f\in K[z]$ of degree $d$ is non-isotrivial. In this case $S_0=\{v_0\}$ (cf.~the definitions right before the Claim), and hence $V_0=\{v_0\}$ and $W_0=\emptyset$. We may set $\epsilon=0$ in the discussion below (\ref{eqn:distance}); the associated constant from Proposition \ref{prop:squish} is then $\eta(d,0)$. We have $\eta(d,0)=0$ \cite[pp.~1067--1068]{Looper:mincanht}. Consequently, (\ref{eqn:upbd}) becomes \begin{equation}\label{eqn:fnfldbd}\sum_{v\in M_K}r_v\log d_v(U)< r_{v_0}\left(-\frac{1}{d-1}\lambda_{\textup{crit},v_0}(f)\right)+\sum_{\substack{v\in M_K\setminus S_0\\v\textup{ bad}}}r_v\epsilon\lambda_{\textup{crit},v}(f)+4\kappa h_{\textup{crit}}(f)\end{equation} and (\ref{eqn:finalineq}) reads as \begin{equation*}\sum_{v\in M_K}\log d_v(U)<\left(\left(-\frac{1}{d-1}\right)\frac{\xi}{s}+\epsilon+4\kappa \right)h_{\textup{crit}}(f).\end{equation*} This leads to a contradiction when $\kappa,\epsilon\ll_{\xi,s,d}1$, since $f$ being non-isotrivial forces $h_{\textup{crit}}(f)\ne 0$; indeed, by \cite[Proposition 6.1]{Benedetto1}, isotriviality is equivalent to potential good reduction at all $v\in M_K$, which by (\ref{eqn:splittingradiusmonic}) is in turn equivalent to the condition $h_{\textup{crit}}(f)=0$. We thus see that the proof in the function field case does not depend on whether $h_{\textup{crit}}(f)$ is bounded. It follows that we may in fact set the $N_1$ referred to in (\ref{eqn:xihyp}) to be $N_1=\max\{N_{1,1},N_{1,2}\}$ in both the number field and the function field case. The existence of $N_2=N_2(M,[K:F],d)$ such that $\kappa\le1$ and $h_{\textup{crit}}(f)\le M$ implies $|T|\le N_2$ has just been addressed via Northcott's Theorem in the number field case. When $K$ is a function field, $h_{\textup{crit}}(f)\le M$ implies that $f$ has at most $Md(d-1)[K:F]$ places of bad reduction (cf.~inequality (\ref{eqn:unif})) and hence that there is a $v_0$ among those places such that $r_{v_0}\lambda_{\textup{crit},v_0}(f)\ge\frac{1}{Md(d-1)[K:F]}h_{\textup{crit}}(f)$. Inequality (\ref{eqn:fnfldbd}) then yields a contradiction when $\kappa,\epsilon\ll_{M,d,[K:F]}1$. Taking $N=\max\{N_1,N_2\}$ completes the proof.\end{proof}
	
	Combining Proposition~\ref{prop:Mxi} (with $S=\mathscr{S}_d\cup M_K^{\infty}$) and Theorem~\ref{thm:globaleq2} immediately implies the main quantitative equidistribution result in this paper. 
	
	\begin{thm}[Global quantitative equidistribution]\label{thm:globaleq} Let $d\ge2$, let $\epsilon>0$, and let $0<\delta<1$. There are constants $N$ and $\kappa>0$, depending only on $[K:F]$, $d$, $\epsilon$, and $\delta$, with the following property. If $f(z)\in K[z]$ is a polynomial of degree $d$, and $T\subseteq K$ is a finite set such that $|T|\ge N$ and \[\frac{1}{|T|}\sum_{P_i\in T}\hat{h}_f(P_i)\le\kappa\max\{1,h_{\textup{crit}}(f)\},\] then $T$ is $\epsilon$-equidistributed for a $\delta$-slice of bad places $v\in M_K^0\setminus\mathscr{S}_d$.\end{thm}
	
	\section{Differences of small points}\label{section:differences}
	
	This section is devoted to showing that elements of the form $P_i-P_j$, where $P_i,P_j$ have small canonical height relative to $f$, tend to have their prime support mostly contained within the set of places of bad reduction for $f$. (Of course, one must take the lead coefficient of $f$ into account.) In the special case where $P_i,P_j$ are preperiodic, this is a dynamical analogue of a parallel phenomenon in the setting of groups: for instance, $n$-th roots of unity remain distinct modulo primes not dividing $n$, and $n$-torsion points on elliptic curves remain distinct modulo primes of good reduction not dividing $n$.

	\begin{definition*} Let $\epsilon>0$, and let $f(z)\in K[z]$ be of degree $d\ge2$ with lead coefficient $a_d$. Let $S$ be the union of the set of places of good reduction for $f$ and $\mathscr{S}_d\cup M_K^\infty$. We say $\alpha\in K^*$ is \emph{$\epsilon$-adelically good} if \[\sum_{v\in S}r_v\left|\log|a_d\alpha^{d-1}|_v\right|\le\epsilon h_{\textup{crit}}(f).\] \end{definition*}
	
	\begin{rmk} As seen in the following proposition, our choice of $\alpha$ will consist of differences of points having small canonical height. It is readily verified that the adjustment by the lead coefficient $a_d$ has the effect of making the notion of $\epsilon$-adelically good for such $\alpha$ be invariant under conjugation by scaling. In other words, if $\mu(z)=\beta z$ for $\beta \in K$, then $\alpha=P_i-P_j$ is $\epsilon$-adelically good for $f$ if and only if $\mu(P_i)-\mu(P_j)$ is $\epsilon$-adelically good for $g:=\mu f\mu^{-1}$. Similarly, such $\alpha$ will also be invariant under conjugation by translation, as differences of pairs of points are left invariant under this kind of change of variable. \end{rmk}
	
	\begin{prop}\label{prop:adgoodbars} Let $\epsilon>0$, and let $d\ge2$ be an integer. There are constants $N=N(d,\epsilon,[K:F])$ and $\kappa=\kappa(d,\epsilon,[K:F])>0$ with the following property: if $f(z)\in K[z]$ is a polynomial of degree $d$, and $T\subseteq K$ is a set of $n\ge N$ points satisfying \[\frac{1}{n}\sum_{P_i\in T}\hat{h}_f(P_i)\le\kappa \max\{1,h_{\textup{crit}}(f)\},\] then at least $(1-\epsilon)(n(n-1))$ elements $(P_i,P_j)\in T^2\setminus\Delta(T^2)$ have the property that $P_i-P_j$ is $\epsilon$-adelically good.\end{prop}
	
	\begin{proof}Let $\epsilon>0$, and let $1>\xi>0$. Suppose without loss that $f(z)\in K[z]$ of degree $d\ge2$ is monic. Proposition \ref{prop:Mxi} says that there is a constant $\kappa=\kappa(d,\xi,[K:F])$ such that if $T\subseteq K$ of order $n$ satisfies \begin{equation}\label{eqn:kappachoice}\frac{1}{n}\sum_{P_i\in T}\hat{h}_f(P_i)\le\kappa\max\{1, h_{\textup{crit}}(f)\}\end{equation} and $|T|\gg_{d,\xi,[K:F]}1$, then \begin{equation}\label{eqn:xichoice}\sum_{v\in M_K^\infty\cup\mathscr{S}_d}r_v\lambda_{\textup{crit},v}(f)\le\xi h_{\textup{crit}}(f).\end{equation} We therefore assume without loss that $\kappa$ is chosen to have this property, so that (\ref{eqn:xichoice}) holds for any sufficiently large $T\subseteq K$ satisfying (\ref{eqn:kappachoice}). Let $T$ be such a set. Also by Proposition \ref{prop:Mxi}, we may assume without loss that $h_{\textup{crit}}(f)\ge1$, so that \begin{equation}\label{eqn:kappabound}\frac{1}{n}\sum_{P_i\in T}\hat{h}_f(P_i)\le\kappa h_{\textup{crit}}(f).\end{equation} Let $S$ be the union of the set of places of good reduction for $f$ and $M_K^\infty\cup\mathscr{S}_d$. Let $\zeta>\max\left\{1,\frac{1}{\epsilon}\right\}$, and let $S'$ be the set of places of $M_K\setminus S$ such that \begin{equation*}\label{eqn:zetabound}\frac{1}{n}\sum_{P_i\in T}\hat{\lambda}_v(P_i)\le\zeta\kappa \lambda_{\mathrm{crit},v}(f).\end{equation*} By (\ref{eqn:kappabound}), we have: \begin{equation}\label{eqn:S'slice} \sum_{v\in S'}r_v\lambda_{\mathrm{crit},v}(f)\ge \left(1-\frac{1}{\zeta}\right)h_{\mathrm{crit}}(f).\end{equation} By Proposition \ref{prop:wtdcap}, there is an $N=N(d,\epsilon)$ such that if $n\ge N$ and $\kappa\ll_{d,\epsilon,\zeta}1$, then for any $v\in S'$, \[\frac{1}{n(n-1)}\sum_{P_i\ne P_j\in T}\log|P_i-P_j|_v\le\frac{\epsilon^2}{2}g_v=\frac{\epsilon^2}{2}\lambda_{\textup{crit},v}(f),\] and hence \begin{equation}\label{eqn:avg1}\frac{1}{n(n-1)}\sum_{P_i\ne P_j\in T}\sum_{v\in S'}r_v\log|P_i-P_j|_v\le\frac{\epsilon^2}{2}\sum_{v\in S'}r_v\lambda_{\textup{crit},v}(f).\end{equation} Assume in what follows that $n\ge N$ and that $\kappa$ is sufficiently small for this to hold. On the other hand, we claim that for any $\tau>0$, there is a constant $C$ depending only on $d$ such that for any subset $W\subseteq T^2\setminus\Delta(T^2)$ with \begin{equation}\label{eqn:tauhtbd}\frac{1}{|W|}\sum_{(P_i,P_j)\in W}\hat{h}_f(P_i)+\hat{h}_f(P_j)\le\tau h_{\mathrm{crit}}(f),\end{equation} one has \begin{equation}\label{eqn:avg2}\frac{1}{|W|}\sum_{(P_i,P_j)\in W}\sum_{v\in S'}r_v\log|P_i-P_j|_v\ge-C-(\xi+1/\zeta+\tau) h_{\textup{crit}}(f).\end{equation} Indeed, from (\ref{eqn:Ingram}) we have that for all $v\in M_K$, \begin{equation}\label{eqn:C'}\log|P_i-P_j|_v\le\hat{\lambda}_v(P_i)+\hat{\lambda}_v(P_j)+\lambda_{\textup{crit},v}(f)+C'\end{equation} for a constant $C'$ depending only on $d$. When $v\in M_K^0\setminus\mathscr{S}_d$, the stronger inequality \begin{equation}\label{eqn:stronger}\log|P_i-P_j|_v\le\hat{\lambda}_v(P_i)+\hat{\lambda}_v(P_j)+\lambda_{\mathrm{crit},v}(f)\end{equation} holds. Combining with (\ref{eqn:xichoice}), (\ref{eqn:S'slice}), and (\ref{eqn:tauhtbd}), we obtain \begin{equation*}\begin{split} \sum_{(P_i,P_j)\in W}\sum_{v\in M_K\setminus S'}r_v\log|P_i-P_j|_v\le&\sum_{(P_i,P_j)\in W} \sum_{v\in M_K^\infty\cup\mathscr{S}_d} r_v\left(\hat{\lambda}_v(P_i)+\hat{\lambda}_v(P_j)+\lambda_{\textup{crit},v}(f)+C'\right)\\&+\sum_{(P_i,P_j)\in W} \sum_{\substack{v\textup{ good}\\v\notin\mathscr{S}_d}}r_v\left(\hat{\lambda}_v(P_i)+\hat{\lambda}_v(P_j)\right)\\&+\sum_{(P_i,P_j)\in W}\sum_{v\in (M_K\setminus S)\setminus S'}r_v\left(\hat{\lambda}_v(P_i)+\hat{\lambda}_v(P_j)+\lambda_{\textup{crit},v}(f)\right)\\ \le&|W|(\tau h_{\textup{crit}}(f)+(\xi+1/\zeta) h_{\textup{crit}}(f)+C'(1+d)).\end{split}\end{equation*} By the product formula, this proves our claim (\ref{eqn:avg2}).

		We claim that when $h_{\mathrm{crit}}(f)\gg_{d,\epsilon}1$,  $\xi,\frac{1}{\zeta}\ll_{\epsilon}1$, $\kappa\ll_{d,\epsilon,\zeta}1$, and $n(n-1)\ge\frac{1}{\epsilon}$, at least $(1-\epsilon)(n(n-1))$ of the elements $(P_i,P_j)\in T^2\setminus\Delta(T^2)$ must satisfy \begin{equation}\label{eqn:avg4} \sum_{v\in S'}r_v\log|P_i-P_j|_v\le\epsilon\sum_{v\in S'}r_v\lambda_{\textup{crit},v}(f).\end{equation} To see this, suppose otherwise. Then at least $\epsilon(n(n-1))$ of these elements satisfy \begin{equation}\label{eqn:exceptional}\sum_{v\in S'}r_v\log|P_i-P_j|_v>\epsilon\sum_{v\in S'}r_v\lambda_{\textup{crit},v}(f)\end{equation} Assuming $n(n-1)\ge\frac{1}{\epsilon}$, we may choose a set of elements $U\subseteq T^2\setminus\Delta(T^2)$ satisfying (\ref{eqn:exceptional}) with $\epsilon(n(n-1))\le|U|\le 2\epsilon(n(n-1))$, so that $W=(T^2\setminus\Delta(T^2))-U$ has size at least $(1-2\epsilon)(n(n-1))$. We have by (\ref{eqn:kappabound}) that \begin{equation}\label{eqn:taubd}\begin{split}\frac{1}{|W|}\sum_{(P_i,P_j)\in W}\hat{h}_f(P_i)+\hat{h}_f(P_j)\le\frac{2\kappa(n(n-1))}{|W|}h_{\mathrm{crit}}(f)&\le\frac{1}{1-2\epsilon}(2\kappa)h_{\mathrm{crit}}(f)\\&=:\tau h_{\mathrm{crit}}(f).\end{split}\end{equation}

		By (\ref{eqn:avg2}) applied to this $W$ and the $\tau$ in (\ref{eqn:taubd}), it follows that for all $h_{\textup{crit}}(f)\gg_{d,\epsilon}1$ and $\xi,\frac{1}{\zeta}\ll_\epsilon1$, and $\kappa\ll_{d,\epsilon,\zeta}1$ that \begin{align*}\frac{1}{n(n-1)}\sum_{P_i\ne P_j\in T}\sum_{v\in S'}r_v\log|P_i-P_j|_v=&\frac{1}{n(n-1)}\Biggl(\sum_{(P_i,P_j)\in U}\sum_{v\in S'}r_v\log|P_i-P_j|_v\\&+\sum_{(P_i,P_j)\in W}\sum_{v\in S'}r_v\log|P_i-P_j|_v\Biggr)\\>&\epsilon\left(\epsilon\sum_{v\in S'}r_v\lambda_{\textup{crit},v}(f)\right)\\&+(1-\epsilon)\left(-(\xi+1/\zeta+\tau+\epsilon^2/2)h_{\textup{crit}}(f)\right)\\\ge&\epsilon^2\sum_{v\in S'}r_v\lambda_{\textup{crit},v}(f)+\frac{(1-\epsilon)(-\xi-1/\zeta-\tau-\epsilon^2/2)}{1-\frac{1}{\zeta}}\sum_{v\in S'}r_v\lambda_{\textup{crit},v}(f)\\=&\left(\epsilon^2-\frac{(\xi+1/\zeta+\tau+\epsilon^2/2)(1-\epsilon)}{1-\frac{1}{\zeta}}\right)\sum_{v\in S'}r_v\lambda_{\textup{crit},v}(f)\\>&\frac{\epsilon^2}{2}\sum_{v\in S'}r_v\lambda_{\textup{crit},v}(f),\end{align*} contradicting (\ref{eqn:avg1}). (Note that we are using (\ref{eqn:S'slice}) in obtaining the second inequality. Note also that $\frac{1}{\zeta}\ll_{\epsilon}1$ is compatible with our assumption that $\zeta>\max\left\{1,\frac{1}{\epsilon}\right\}$.) This proves the claim (\ref{eqn:avg4}).
		
		Let us denote the set of elements of $T^2\setminus\Delta(T^2)$ satisfying (\ref{eqn:avg4}) by $Z$. Recall that $S'$ and $S$ are disjoint by hypothesis. From (\ref{eqn:avg4}), (\ref{eqn:S'slice}), and (\ref{eqn:stronger}), we further obtain that for each $(P_i,P_j)\in Z$, \begin{equation}\label{eqn:avg5}\begin{split}\sum_{v\in M_K\setminus S}r_v\log|P_i-P_j|_v&=\sum_{v\in S'}r_v\log|P_i-P_j|_v+\sum_{v\in (M_K\setminus S)\setminus S'}r_v\log|P_i-P_j|_v\\&\le \epsilon\sum_{v\in S'}r_v\lambda_{\mathrm{crit},v}(f)+\sum_{v\in (M_K\setminus S)\setminus S'}r_v\log|P_i-P_j|_v\\&\le\left(\epsilon\sum_{v\in S'}r_v\lambda_{\mathrm{crit},v}(f)\right)+\frac{1}{\zeta}h_{\mathrm{crit}}(f)+\hat{h}_f(P_i)+\hat{h}_f(P_j) \\&<2\epsilon h_{\mathrm{crit}}(f)+\hat{h}_f(P_i)+\hat{h}_f(P_j)\end{split}\end{equation} as we had assumed that $\frac{1}{\zeta}<\epsilon$. The product formula applied to (\ref{eqn:avg5}) then leads us to conclude that for each $(P_i,P_j)\in Z$, \begin{equation}\label{eqn:adgood1}\begin{split}\sum_{v\in S}r_v\log|(P_i-P_j)|_v>-\left(2\epsilon h_{\mathrm{crit}}(f)+\hat{h}_f(P_i)+\hat{h}_f(P_j)\right).\end{split}\end{equation} The right-hand side of this inequality is in turn bounded below by $-3\epsilon h_{\textup{crit}}(f)$ for at least $(1-\epsilon)|Z|\ge (1-\epsilon)^2(n(n-1))$ elements $(P_i,P_j)\in Z$, provided $\kappa\ll_{\epsilon}1$. Without loss make this assumption on $\kappa$, and call the set of such elements $Z'$, so that \begin{equation}\label{eqn:adgood1'}\begin{split}\sum_{v\in S}r_v\log|(P_i-P_j)|_v\ge-3\epsilon h_{\textup{crit}}(f)\end{split}\end{equation} for all $(P_i,P_j)\in Z'$.

		Now suppose that $(P_i,P_j)\in Z'$, and that $\kappa'>0$ satisfies \begin{equation}\label{eqn:kappaprime}\hat{h}_f(P_i),\hat{h}_f(P_j)\le\kappa' h_{\textup{crit}}(f).\end{equation} By (\ref{eqn:xichoice}) and (\ref{eqn:C'}), we have \begin{equation}\label{eqn:bound1}\begin{split}\sum_{v\in S}r_v\log\max\{1,|P_i-P_j|_v\}&\le2\kappa' h_{\textup{crit}}(f)+(d+1)C'+\sum_{v\in M_K^\infty\cup\mathscr{S}_d}r_v\lambda_{\textup{crit},v}(f)\\&\le(2\kappa'+\xi)h_{\textup{crit}}(f)+(d+1)C'.\end{split}\end{equation} Subtracting (\ref{eqn:adgood1'}) from (\ref{eqn:bound1}) gives \begin{equation}\label{eqn:bound2}\begin{split}\sum_{v\in S}r_v\left(-\log\min\{1,|P_i-P_j|_v\}\right)&\le\left(2\kappa'+\xi+3\epsilon\right)h_{\textup{crit}}(f)+(d+1)C'.\end{split}\end{equation} Inequalities (\ref{eqn:bound1}) and (\ref{eqn:bound2}) taken together give \begin{equation}\label{eqn:combination}\begin{split}\sum_{v\in S} r_v\left|\log|(P_i-P_j)^{d-1}|_v\right|&\le (d-1)(4\kappa'+2\xi+3\epsilon)h_{\textup{crit}}(f)+2(d-1)(d+1)C'.\end{split}\end{equation} Since by (\ref{eqn:kappabound}) we have \[\frac{1}{n(n-1)}\sum_{(P_i,P_j)\in T^2\setminus\Delta(T^2)}\hat{h}_f(P_i)+\hat{h}_f(P_j)\le 2\kappa h_{\textup{crit}}(f),\] one obtains \[\frac{1}{|Z'|}\sum_{(P_i,P_j)\in Z'}\hat{h}_f(P_i)+\hat{h}_f(P_j)\le\frac{2}{(1-\epsilon)^2}\kappa h_{\textup{crit}}(f).\] It follows that at least $(1-\epsilon)|Z'|\ge(1-\epsilon)^3n(n-1)$ of the elements $(P_i,P_j)\in Z'$ satisfy \[\hat{h}_f(P_i),\hat{h}_f(P_j)\le\frac{2}{\epsilon(1-\epsilon)^2}\kappa h_{\textup{crit}}(f).\] Letting $\kappa'=\frac{2}{\epsilon(1-\epsilon)^2}\kappa$ in inequalities (\ref{eqn:kappaprime})--(\ref{eqn:combination}), we see that (\ref{eqn:combination}) implies that for $\kappa\ll_{d,\epsilon,[K:F]}1$ and $n\gg_{d,\epsilon,[K:F]}1$, at least $(1-\epsilon)^3n(n-1)$ elements of $T^2\setminus\Delta(T^2)$ have the property that \[\sum_{v\in S} r_v\left|\log|(P_i-P_j)^{d-1}|_v\right|\le 3d\epsilon h_{\textup{crit}}(f).\] Adjusting the initial choice of $\epsilon$ accordingly completes the proof.\end{proof}

	\section{Proof of Theorems \ref{thm:UBCpolys} and \ref{thm:dynLangpolys}}\label{section:proof}
	
	In this section, we prove Theorems \ref{thm:UBCpolys} and \ref{thm:dynLangpolys}. These theorems follow from the following two statements. 
	
	\begin{thm}\label{thm:combined} Assume the $abcd$-conjecture (Conjecture \ref{conj:abcd}) for $K$, and let $d\ge2$. If $K$ is a function field, then assume that $\operatorname{char}(K)=0$. There is a $\kappa=\kappa(d,K)>0$ and a $B=B(d,K)$ such that if $f(z)\in K[z]$ is of degree $d$, and if $T\subseteq K$ is a finite set satisfying \[\frac{1}{|T|}\sum_{P_i\in T}\hat{h}_f(P_i)\le\kappa \max\{1,h_{\textup{crit}}(f)\},\] then either $K$ is a function field and $f$ is isotrivial, or $|T|\le B$.\end{thm}
	
	\begin{lem}\label{lem:isotrivLang} \sloppy Let $K/k(t)$ be a function field, and let $d\ge2$. There is a ${\kappa=\kappa(d,[K:k(t)])>0}$ with the following property. If $f\in K[z]$ of degree $d$ is conjugate by $\mu\in\textup{PGL}_2(\overline{K})$ to $g\in k[z]$, then for all $P\in K$, either $\hat{h}_f(P)=0$, or \[\hat{h}_f(P)\ge\kappa.\]\end{lem}
	
	\begin{proof} Let $f\in K[z]$ of degree $d$ be isotrivial, i.e., assume that $\mu f\mu^{-1}\in k[z]$ for some $\mu\in\textup{PGL}_2(\overline{K})$. By \cite[Proposition 6.1]{Benedetto1}, isotriviality is equivalent to everywhere potential good reduction, so the filled Julia set $\mathcal{K}_v$ of $f$ must be a disk at all places. We have seen that the capacity of such a disk of radius $r$ is equal to $r$. We also know that this capacity equals $|a_d|_v^{-1/(d-1)}$, where $a_d$ is the lead coefficient of $f$ \cite[Theorem 1.2]{DeMarcoRumely}. Conjugating $f$ by an affine linear transformation $\mu\in\textup{PGL}_2(L)$ for a suitable extension $L/K$, in such a way that $g=\mu f\mu^{-1}\in k[z]$ is monic and defined over the field of constants $k$, the filled Julia set of $g$ is simply the closed unit disk about $0$ at all places $w\in M_L$ (since then both this disk and its complement are totally invariant under $g$).
		
		Suppose $P\in K$. As $\mu$ corresponds to a scaling by $a_d^{1/(d-1)}$ followed by a translation taking the filled Julia set to the closed unit disk about $0$, we may assume that $[L:K]\le d(d-1)$. Thus for any $v\in M_K$ and for any extension $w$ of $v$ to $L$, \begin{equation}\label{eqn:pointvaluation} |\mu(P)|_w\in|K^\ast|_v^{1/(d(d-1))!}\cup\{0\}.\end{equation} Suppose $P$ satisfies $0<\hat{h}_g(\mu(P))\le\kappa$ (which is equivalent to $0<\hat{h}_f(P)\le\kappa$). Then for some $w\in M_L$, we have $0<u_w\hat{\lambda}_w(\mu(P))\le\kappa$, where \[u_w:=\frac{[L_w:k(t)_v]}{[L:k(t)]}=\frac{[L_w:K_v][K_v:k(t)_v]}{[L:K][K:k(t)]}\ge\frac{1}{d(d-1)}\frac{[K_v:k(t)_v]}{[K:k(t)]}\] and $\hat{\lambda}_w$ is the $w$-adic local canonical height with respect to $g$. Since the $w$-adic filled Julia set of $g$ is the closed unit disk about $0$, the local canonical height $\hat{\lambda}_w$ satisfies $\hat{\lambda}_w(\mu(P))=\log\max\{1,|\mu(P)|_w\}$, from which we obtain $0<u_w\log|\mu(P)|_w\le\kappa$. But if $\kappa\ll_{d,[K:k(t)]}1$, this contradicts (\ref{eqn:pointvaluation}). The desired result follows.\end{proof}
	
	For the purposes of the following proposition, we remind the reader that for two Berkovich disks $\mathcal{B}_1$ and $\mathcal{B}_2$, \[\delta_v(\mathcal{B}_1,\mathcal{B}_2):=\sup_{x\in\mathcal{B}_1,y\in\mathcal{B}_2}\{\delta_v(x,y)\}.\] 
	
	\begin{prop}\label{prop:quad} Let $0<\epsilon<1$ and let $d\ge2$. Let $f(z)\in K[z]$ be a degree $d$ polynomial, and suppose $v\in M_K^0\setminus\mathscr{S}_d$ is a place of bad reduction for $f$. Then if $T\subseteq K$ is $\frac{1}{2}$-equidistributed at $v$, and $k\ge k_0:=\left\lceil \frac{\log\epsilon}{C(d)} \right\rceil$, where $C(d)=\log\left(1-\left(\frac{1}{2d(d-1)}\right)^4\right)$, then at least $(1-\epsilon)|T|^{4k}$ elements of the form \[(a_1,b_1,c_1,d_1,\dots,a_k,b_k,c_k,d_k)\in T^{4k}\] have the property that for some $1\le i\le k$, there are disk components $\mathcal{B}_{1,j}$ and $\mathcal{B}_{1,l}$ of $\mathcal{E}_1$ with $\log\delta_v(\mathcal{B}_{1,j},\mathcal{B}_{1,l})=g_v$ and \[a_i,b_i\in \mathcal{B}_{1,j} \text{ and } c_i,d_i\in\mathcal{B}_{1,l}.\] 
	\end{prop}
	
	\begin{proof}Let $0<\epsilon<1$, and let $T$ be $\frac{1}{2}$-equidistributed at $v$. Let $A$ and $B$ be the wings of a wing decomposition of $\mathcal{E}_1$. Since $A$ contains at most $d-1$ disk components, it follows from the definition of $\frac{1}{2}$-equidistribution and the pigeonhole principle that there is a disk component $\mathcal{B}_{1,j}$ of $A$ such that at least $|T|(1/(2d(d-1))$ elements of $T$ lie in $\mathcal{B}_{1,j}$, and hence at least $|T^2|(1/(2d(d-1)))^2$ elements of $T^2$ lie in $\mathcal{B}_{1,j}$. Similarly, there is a disk component $\mathcal{B}_{1,l}$ of $B$ such that at least $|T|(1/(2d(d-1))$ elements of $T$ lie in $\mathcal{B}_{1,l}$, and at least $|T^2|(1/(2d(d-1))^2$ elements of $T^2$ lie in $\mathcal{B}_{1,l}$. Thus, writing $t(d):=1/(2d(d-1))^4$, at least $t(d)|T^4|$ elements $(a_i,b_i,c_i,d_i)\in T^4$ satisfy $a_i,b_i\in\mathcal{B}_{1,j}$ and $c_i,d_i\in\mathcal{B}_{1,l}$. From this it follows that at least \[(1-(1-t(d))^k)|T^{4k}|\] choices of \[(a_1,b_1,c_1,d_1,\dots,a_k,b_k,c_k,d_k)\in T^{4k}\] have the property that for some $1\le i\le k$, \[a_i,b_i\in\mathcal{B}_{1,j} \text{ and } c_i,d_i\in\mathcal{B}_{1,l}.\] Taking $C(d)=\log\left(1-t(d)\right)$ yields $(1-(1-t(d))^k)\ge 1-\epsilon$ so long as $k\ge\frac{\log\epsilon}{C(d)}$.\end{proof}

	\begin{cor}\label{cor:quad}Let $0<\epsilon<1$, and let $d\ge2$. There are constants $\kappa=\kappa(d,\epsilon)>0$, $N=N(d,\epsilon)$, and $k_0=k_0(d,\epsilon)$ with the following property. Let $f(z)\in K[z]$ be a degree $d$ polynomial, and suppose $T\subseteq K$ is a finite set with $|T|\ge N$ and \[\frac{1}{|T|}\sum_{P_i\in T}\hat{h}_f(P_i)\le\kappa\max\{1,h_{\textup{crit}}(f)\}.\] For each $v\in M_K^0\setminus\mathscr{S}_d$, let $\mathcal{E}_1$ be as in (\ref{eqn:Emdef}), and suppose $k\ge k_0$. Then at least $(1-\epsilon)|T|^{4k}$ elements of the form \[(a_1,b_1,c_1,d_1,\dots,a_k,b_k,c_k,d_k)\in T^{4k}\] have the property that there is a $(1-\epsilon)$-slice $S$ of bad places $v\in M_K^0\setminus\mathscr{S}_d$ such that for any $v\in S$, there is some $1\le i\le k$ such that \[a_i,b_i\in\mathcal{B}_{1,j} \text{ and }c_i,d_i\in\mathcal{B}_{1,l},\] for some disk components $\mathcal{B}_{1,j}$, $\mathcal{B}_{1,l}$ of $\mathcal{E}_1$ satisfying $\log\delta_v(\mathcal{B}_{1,j},\mathcal{B}_{1,l})=g_v$. Furthermore, we may let $k_0=\left\lceil \frac{\log(\epsilon^2/2)}{C(d)} \right\rceil$, where $C(d)$ is as in Proposition \ref{prop:quad}.\end{cor} 
	
	\begin{proof} Let $0<\epsilon<1$, let $0<\epsilon'<\frac{1}{2}$ be such that $\epsilon'<\epsilon^2$, and let $\delta<1$ be such that $\delta(1-\epsilon')>1-\epsilon^2$. For $v\in M_K^0\setminus\mathscr{S}_d$ a place of bad reduction, $k\ge k_0=k_0(d,\epsilon')$ as in Proposition \ref{prop:quad}, and \[\vec{x}=(a_1,b_1,c_1,d_1,\dots,a_k,b_k,c_k,d_k)\in T^{4k},\] let $\chi_v(\vec{x})=1$ if for some $1\le i\le k$ we have $a_i,b_i\in\mathcal{B}_{1,j}$ and $c_i,d_i\in\mathcal{B}_{1,l}$ for disk components $\mathcal{B}_{1,j}$, $\mathcal{B}_{1,l}$ such that $\log\delta_v(\mathcal{B}_{1,j},\mathcal{B}_{1,l})=\lambda_{\textup{crit},v}(f)$, and let $\chi_v(\vec{x})=0$ otherwise. Proposition \ref{prop:quad} says that if $T$ is $\epsilon'$-equidistributed at $v$, then \[\frac{1}{|T|^{4k}}\sum_{\vec{x}\in T^{4k}}\chi_v(\vec{x})\ge(1-\epsilon').\] From Theorem \ref{thm:globaleq}, we deduce that if $|T|\gg_{\epsilon',d,\delta}1$ and $\kappa\ll_{\epsilon',d,\delta}1$, then $T$ is $\epsilon'$-equidistributed at a $\delta$-slice of bad places $v\in M_K^0\setminus\mathscr{S}_d$, and hence \begin{equation}\label{eqn:chitotal}\frac{1}{|T|^{4k}}\sum_{v\in M_K^0\setminus\mathscr{S}_d}\sum_{\vec{x}\in T^{4k}}r_v\chi_v(\vec{x})\lambda_{\textup{crit},v}(f)\ge\delta(1-\epsilon')\sum_{v\in M_K^0\setminus\mathscr{S}_d}r_v\lambda_{\textup{crit},v}(f).\end{equation}
		
		Now suppose the set $\mathcal{T}$ of tuples in $T^{4k}$ such that there does \emph{not} exist a $(1-\epsilon)$-slice of places as stipulated in the statement of Corollary \ref{cor:quad} is nonempty. Then \[\sum_{\vec{x}\in\mathcal{T}}\sum_{v\in M_K^0\setminus\mathscr{S}_d}r_v\chi_v(\vec{x})\lambda_{\textup{crit},v}(f)<|\mathcal{T}|(1-\epsilon)\sum_{v\in M_K^0\setminus\mathscr{S}_d}r_v\lambda_{\textup{crit},v}(f),\] and trivially, as $\chi_v(\vec{x})\le 1$, \[\sum_{\vec{x}\in T^{4k}\setminus\mathcal{T}}\sum_{v\in M_K^0\setminus\mathscr{S}_d}r_v\chi_v(\vec{x})\lambda_{\textup{crit},v}(f)\le |T^{4k}\setminus\mathcal{T}|\sum_{v\in M_K^0\setminus\mathscr{S}_d}r_v\lambda_{\textup{crit},v}(f).\] Summing these last two inequalities yields \begin{equation}\label{eqn:tupledistribution} \sum_{\vec{x}\in T^{4k}}\sum_{v\in M_K^0\setminus\mathscr{S}_d}r_v\chi_v(\vec{x})\lambda_{\textup{crit},v}(f)<\left(|\mathcal{T}|(1-\epsilon)+|T^{4k}\setminus\mathcal{T}|\right)\sum_{v\in M_K^0\setminus\mathscr{S}_d}r_v\lambda_{\textup{crit},v}(f).\end{equation} Write $k_1=\frac{|\mathcal{T}|}{|T|^{4k}}$ and $k_2=\frac{|T^{4k}\setminus\mathcal{T}|}{|T|^{4k}}$. Then dividing (\ref{eqn:tupledistribution}) by $|T|^{4k}$ gives \begin{equation}\label{eqn:linearcombination}\frac{1}{|T|^{4k}}\sum_{\vec{x}\in T^{4k}}\sum_{v\in M_K^0\setminus\mathscr{S}_d}r_v\chi_v(\vec{x})\lambda_{\textup{crit},v}(f)<\left(k_1(1-\epsilon)+k_2\cdot 1\right)\sum_{v\in M_K^0\setminus\mathscr{S}_d}r_v\lambda_{\textup{crit},v}(f).\end{equation} Suppose that less than $(1-\epsilon)|T|^{4k}$ elements of $T^{4k}$ are in $T^{4k}\setminus\mathcal{T}$, i.e., that $k_1\ge \epsilon$ and $k_2< 1-\epsilon$. Since the coefficient outside the summation on the right-hand side of (\ref{eqn:linearcombination}) is an affine-linear combination of $1-\epsilon$ and $1$, and hence increases monotonically as $k_2$ increases, we then have \begin{equation*}\begin{split} \frac{1}{|T|^{4k}}\sum_{\vec{x}\in T^{4k}}\sum_{v\in M_K^0\setminus\mathscr{S}_d}r_v\chi_v(\vec{x})\lambda_{\textup{crit},v}(f)&<\left(\epsilon(1-\epsilon)+(1-\epsilon)\cdot1\right)\sum_{v\in M_K^0\setminus\mathscr{S}_d}r_v\lambda_{\textup{crit},v}(f)\\&=(1-\epsilon^2)\sum_{v\in M_K^0\setminus\mathscr{S}_d}r_v\lambda_{\textup{crit},v}(f).\end{split}\end{equation*} As we had assumed that $\delta(1-\epsilon')>1-\epsilon^2$, this forces a contradiction of (\ref{eqn:chitotal}) whenever $|T|\gg_{\epsilon',d,\delta}1$ and $\kappa\ll_{\epsilon',d,\delta}1$ so as to make (\ref{eqn:chitotal}) hold. We conclude that under this assumption, $|T^{4k}\setminus\mathcal{T}|\ge(1-\epsilon)|T|^{4k}$, as needed. The corollary follows upon observing that we may choose for example $\epsilon'=\epsilon^2/2$ in the foregoing, so that $k_0=\left\lceil \frac{\log(\epsilon^2/2)}{C(d)} \right\rceil$ depends only on $d$ and $\epsilon$.\end{proof}

	\begin{proof}[Proof of Theorem \ref{thm:combined}] Let $0<\epsilon<1$, suppose without loss that $f$ is non-isotrivial if $K$ is a function field, and suppose $T\subseteq K$ is a finite set and \begin{equation}\label{startassump} \frac{1}{|T|}\sum_{P_i\in T}\hat{h}_f(P_i)\le\kappa\max\{1,h_{\textup{crit}}(f)\}.\end{equation}Let $k=k_0:=\left\lceil \frac{\log(\epsilon^2/2)}{C(d)} \right\rceil$ be as in Corollary \ref{cor:quad}. Let $Z_1,\dots,Z_{2k+1}$ be the standard homogeneous coordinates on $\mathbb{P}^{2k}$, and let \[\mathcal{H}:Z_1+\dots+Z_{2k+1}=0.\] Suppose the corresponding set $\Zcal=\Zcal(K,\epsilon,2k+1)\subsetneq \mathcal{H}$ stipulated by Conjecture \ref{conj:abcd} is contained in the hypersurface defined by $g(Z_1,\dots,Z_{2k+1})=0$ for some $g\in\overline{K}[Z_1,\dots,Z_{2k+1}]$. If $\mathcal{Z}$ is empty (which can only happen when $2k+1=3$ by \cite[p.~6]{Vojta}), then the discussion that follows, resulting in (\ref{g2conclusion}), is unnecessary anyway as there is then no exceptional set to avoid. We thus assume that $\mathcal{Z}$ is nonempty. By symmetry, we may assume that $g$ is non-constant in each of the variables $Z_1,\dots,Z_{2k+1}$; indeed, if $g$ does not have this property, then we may replace $g$ by the product of all polynomials in the orbit of $g$ under the permutation action on $Z_1,\dots,Z_{2k+1}$. Since on $\mathcal{Z}$ we have $Z_{2k+1}=-\sum_{i=1}^{2k}Z_{2k}$, there is a $g_1\in\overline{K}[Z_1,\dots,Z_{2k}]$ such that $g(Z_1,\dots,Z_{2k+1})=g_1(Z_1,\dots,Z_{2k})$ on $\mathcal{Z}$. For each $1\le j\le k$, write $Z_{2j-1}=X_j$ and $Z_{2j}=M_j-X_j$. Substituting these expressions for the $Z_j$, we have \[g_1(Z_1,\dots,Z_{2k})=g_2(X_1,M_1,\dots,X_k,M_k),\] where $g_2$ is a polynomial over $\overline{K}$ non-constant in each of the variables $M_1,\dots,M_k$, and non-constant in the variables in some (a priori possibly empty) subset \begin{equation}\label{eqn:specialsubset}\{X_{i_1},\dots,X_{i_s}\}\subseteq\{X_1,\dots,X_k\}.\end{equation} We note that the case where this subset is empty immediately allows us to choose $m_1,\dots,m_k$ such that $g_2(x_1,m_1,\dots,x_k,m_k)\ne 0$ for any $x_1,\dots,x_k$. As producing $m_1,\dots,m_k$ (depending only on $d$) and suitable $x_1,\dots,x_k$ (derived from $f$) such that $g_2(x_1,m_1,\dots,x_k,m_k)\ne0$ is the entire goal of all subsequent discussion involving $\mathcal{Z}$, we may therefore assume that $s\ge1$. All but finitely many choices of $M_1=m_1\in\mathbb{Z}_+$ yield $g_2(X_1,m_1,\dots,X_k,M_k)$ non-constant in each of the variables $X_{i_1},X_{i_2},\dots,X_{i_s}$,$M_2,\dots,M_k$. Choose such an $m_1$, and make successive choices of $m_2,\dots,m_k\in\mathbb{Z}_+$ such that \[g_2(X_1,m_1,\dots,X_k,m_k)\] is non-constant in each of the variables $X_{i_1},\dots,X_{i_s}$. These values of $m_1,\dots,m_k$ are fixed in all that follows. 
		
		We next make three straightforward yet verbally cumbersome observations that we will then combine. (The second ``observation'' simply applies Corollary \ref{cor:quad} to our set $T$, and is written here for convenience.)
		\newline 
		
		\textbf{Observation 1}: Reindex the $X_j$ so that (\ref{eqn:specialsubset}) satisfies $i_t=t$ for all $1\le t\le s$. For each $j\le k-1$ and each successive substitution (into $g_2$) of the coordinates $x_k=X_k,x_{k-1}=X_{k-1},\dots,x_{k-j+1}=X_{k-j+1}$ such that $g_2$ is nonconstant in all of the indeterminates $X_i$ with $1\le i\le\min\{s,k-j\}$, there are at most $C=C(\textup{deg}(g_2),k)=C(d)$ choices of $x_{k-j}=X_{k-j}$ such that \[g_2\left(X_1,\dots,X_{k-j-1},x_{k-j},\dots,x_k\right)\] is constant in one or more of the variables $X_1,\dots,X_{\min\{s,k-j-1\}}$. Here $\textup{deg}(g_2)$ is the total degree of $g_2$ as a polynomial in the $k$ variables $X_1,\ldots,X_k$.
		
		\textbf{Observation 2}: For each $v\in M_K^0\setminus\mathscr{S}_d$, let $\mathcal{E}_1$ be as in (\ref{eqn:Emdef}). As we have assumed that $k\ge k_0=k_0(d,\epsilon)$ for $k_0$ as in Corollary \ref{cor:quad}, it follows that if $|T|\gg_{d,\epsilon}1$, and $\kappa\ll_{d,\epsilon}1$, then at least $(1-\epsilon)|T|^{4k}$ choices of \begin{equation}\label{eqn:4ktuple}\vec{x}=(a_1,b_1,c_1,d_1,\dots,a_k,b_k,c_k,d_k)\in T^{4k}\end{equation} have the property that there is a $(1-\epsilon)$-slice $S$ of bad places $v\in M_K^0\backslash\mathscr{S}_d$ such that for each $v\in S$, there is some $1\le i\le k$ as well as disk components $\mathcal{B}_{1,j}$, $\mathcal{B}_{1,l}$ of $\mathcal{E}_1$ such that \begin{equation}\label{eqn:Sslice}a_i,b_i\in\mathcal{B}_{1,j} \text{ and }c_i,d_i\in\mathcal{B}_{1,l}\end{equation} and $\log\delta_v(\mathcal{B}_{1,j},\mathcal{B}_{1,l})=g_v$. 
		
		\textbf{Aside to Observation 2}: We further note that for such points $a_i,b_i\in\mathcal{B}_{1,j} \text{ and }c_i,d_i\in\mathcal{B}_{1,l}$, \begin{equation}\label{eqn:quotientbd1}\log\left|\frac{a_i-c_i}{c_i-d_i}\right|_v\ge\lambda_{\textup{crit},v}(f);\end{equation} indeed, both sides are invariant under conjugation and (\ref{eqn:quotientbd1}) clearly holds in the case where $f$ is monic, as then we have \begin{equation*}\label{eqn:diamlb} \log\text{diam}(\mathcal{B}_{1,l})\le0\end{equation*} by \cite[Proposition 4.3]{Looper:mincanht} and $\log|a_i-c_i|_v=g_v=\lambda_{\textup{crit},v}(f)$ by (\ref{eqn:splittingradiusmonic}). Since $N_v = r_v \log|\pi_v^{-1}|_v$ where $\pi_v$ is a uniformizer at $v$, one further has \begin{equation}\label{eqn:quotientbd2}r_v\log\left|\frac{a_i-c_i}{c_i-d_i}\right|_v\ge N_v\end{equation} by the $K$-rationality of $a_i,c_i,d_i$. Inequalities analogous to those in (\ref{eqn:quotientbd1}) and (\ref{eqn:quotientbd2}) also apply to $\left|\frac{d_i-b_i}{b_i-a_i}\right|_v$. It follows that \begin{equation}\label{eqn:finalquotientbd}r_v\log\left|\frac{(a_i-c_i)(d_i-b_i)}{(c_i-d_i)(b_i-a_i)}\right|_v-N_v\ge r_v\log\left|\frac{d_i-b_i}{b_i-a_i}\right|_v\ge r_v\lambda_{\textup{crit},v}(f)\end{equation} for any $a_i,b_i,c_i,d_i$ satisfying (\ref{eqn:Sslice}). In the reverse direction, (\ref{eqn:unif}) below gives \begin{equation}\label{eqn:Nvlambda} N_v\le d(d-1)\lambda_{\textup{crit},v}(f)\end{equation} for all $v\in S$. We will later apply (\ref{eqn:finalquotientbd}) and (\ref{eqn:Nvlambda}) in order to obtain a key immediate feeder into our final inequality (\ref{eqn:lastline}) (namely (\ref{eqn:SnotSprime})). 
		
		For $\vec{a}=(a_1,\dots,a_k)\in T^k$, and similarly for $\vec{b},\vec{c},\vec{d}$, we write $(\vec{a},\vec{b},\vec{c},\vec{d})$ for the element \[(\vec{a},\vec{b},\vec{c},\vec{d})=(a_1,b_1,c_1,d_1,\dots,a_k,b_k,c_k,d_k)\in T^{4k}.\] It follows from Proposition \ref{prop:adgoodbars} that if $|T|\gg_{\epsilon,d}1$ and $\kappa\ll_{\epsilon,d}1$, then at least $\left(1-\epsilon\right)|T^{4k}|$ elements of $(\vec{a},\vec{b},\vec{c},\vec{d})\in T^{4k}$ have the property that for all $1\le i\le k$, \begin{equation}\label{eqn:egood} a_i-b_i,a_i-c_i,a_i-d_i,b_i-c_i,b_i-d_i,  \text{ and }  c_i-d_i \textup{ are }\frac{\epsilon}{8}\textup{-adelically good}.\end{equation} Observation $2$ on the other hand says that if $n\gg_{\epsilon,d}1$ and $\kappa\ll_{\epsilon,d}1$, then at least $\left(1-\epsilon\right)|T^{4k}|$ elements of $T^{4k}$ have the property that (\ref{eqn:Sslice}) holds for some $(1-\epsilon)$-slice of bad places $v\in M_K^0\setminus\mathscr{S}_d$ (depending on $(\vec{a},\vec{b},\vec{c},\vec{d})$). Denoting the set of tuples satisfying (\ref{eqn:egood}) by $T_1$ and the latter set of tuples by $T_2$, we thus have that $|T_1\cap T_2|\ge(1-2\epsilon)|T^{4k}|$ for $|T|\gg_{\epsilon,d}1$ and $\kappa\ll_{\epsilon,d}1$. Let $W:=T_1\cap T_2$. For $\vec{a}=(a_1,\dots,a_k)$, $\vec{b}=(b_1,\dots,b_k)$, and $\vec{c}=(c_1,\dots,c_k)$, write  \[W_{(\vec{a},\vec{b},\vec{c})}:=\{(a_1,b_1,c_1,d_1,\dots,a_k,b_k,c_k,d_k)\in W\}\] and partition $W$ as \begin{equation}\label{eqn:partition}W=\bigsqcup_{\substack{(a_1,b_1,c_1,\dots,a_k,b_k,c_k)\in T^{3k}\\W_{(\vec{a},\vec{b},\vec{c})}\ne\emptyset}}W_{(\vec{a},\vec{b},\vec{c})}.\end{equation}

		\textbf{Observation 3}: For $|T|\gg_{\epsilon,d}1$ and $\kappa\ll_{\epsilon,d}1$, at least $(1-\sqrt{2\epsilon})|T^{3k}|$ elements \[(a_1,b_1,c_1,\dots,a_k,b_k,c_k)\in T^{3k}\] each have the property that at least $(1-\sqrt{2\epsilon})|T^k|$ elements $\vec{d}=(d_1,\dots,d_k)\in T^k$ satisfy \begin{equation}\label{eqn:belonging}(a_1,b_1,c_1,d_1,\dots,a_k,b_k,c_k,d_k)\in W_{(\vec{a},\vec{b},\vec{c})}.\end{equation} For otherwise, by (\ref{eqn:partition}), more than $\sqrt{2\epsilon}|T^{3k}|$ elements $(a_1,b_1,c_1,\dots,a_k,b_k,c_k)\in T^{3k}$ each have more than $\sqrt{2\epsilon}|T^{k}|$ elements $\vec{d}\in T^k$ with $(\vec{a},\vec{b},\vec{c},\vec{d})\notin W$, i.e., there are more than $2\epsilon|T^{4k}|$ elements of $T^{4k}$ that are not in $W$, contradicting our very first statement about $W=T_1\cap T_2$.

		Let $(a_1,b_1,c_1,\dots,a_k,b_k,c_k)\in T^{3k}$ be one of the at least $\left(1-\sqrt{2\epsilon}\right)|T^{3k}|$ resulting elements such that at least $\left(1-\sqrt{2\epsilon}\right)|T^k|$ elements $\vec{d}=(d_1,\dots,d_k)\in T^k$ satisfy (\ref{eqn:belonging}). %%Suppose $(a_1,b_1,c_1,\dots,a_k,b_k,c_k)\in W$ is such that at least $(1-\epsilon)|T^k|$ elements $\vec{d}=(d_1,\dots,d_k)\in T^k$ satisfy (\ref{eqn:belonging}).%%
		Note that since for all $1\le i\le k$, the elements $a_i-c_i$, $b_i-c_i$, and $b_i-a_i$ are $\frac{\epsilon}{8}$-adelically good, we have in particular that these elements are nonzero, and so  \[\psi_{(a_i,b_i,c_i)}(y):=m_i\frac{(a_i-c_i)(y-b_i)}{(c_i-y)(b_i-a_i)}\in K(y)\] is a fractional linear transformation, hence a bijection of $\mathbb{P}_K^1$, for all $1\le i\le k$. Applying Observation 1, we thus see that when $|T|\gg_{\epsilon,d,k=k(d)}1$ and $\kappa\ll_{\epsilon,d}1$, there are at least $\left(1-\sqrt{2\epsilon}\right)|T|-C$ elements $d_1\in T$ such that: \begin{enumerate}[label=(\roman*)] \item \[(a_1,b_1,c_1,d_1,\dots,a_k,b_k,c_k,d_k)\in W_{(\vec{a},\vec{b},\vec{c})}\] for some choices of $d_2,\dots,d_k$, and \item If $x_k=m_k(a_k-c_k)(d_k-b_k)/[(c_k-d_k)(b_k-a_k)]$, then $g_2(X_1,m_1,X_2,m_2,\dots,x_k,m_k)$ is nonconstant in all of the indeterminates $X_i$ with $1\le i\le\min\{s,k-1\}$.\end{enumerate} Continuing in this manner, it follows that when $|T|\gg_{\epsilon,d}1$ and $\kappa\ll_{\epsilon,d}1$, at least \begin{equation*}\begin{split}\left(\left(1-\sqrt{2\epsilon}\right)|T|-C\right)^k&\ge\left(1-2\sqrt{2\epsilon}\right)^k|T|^k\end{split}\end{equation*} elements $\vec{d}\in T^k$ satisfy: \begin{enumerate}[label=(\roman*), resume] \item\label{item:iii} $(a_1,b_1,c_1,d_1,\dots,a_k,b_k,c_k,d_k)\in W_{(\vec{a},\vec{b},\vec{c})}$, and \item\label{item:iv} If \begin{equation}\label{eqn:x_i}x_i=m_i\frac{(a_i-c_i)(d_i-b_i)}{(c_i-d_i)(b_i-a_i)}\hspace{10mm}\forall1\le i\le k,\end{equation} then \begin{equation}\label{g2conclusion} g_2(x_1,m_1,\dots,x_k,m_k)\ne 0.\end{equation} \end{enumerate} (Note that \ref{item:iii} ensures that $a_i-c_i,d_i-b_i,c_i-d_i,b_i-a_i$ are all nonzero, so that $x_i$ is nonzero and finite.) Altogether, we conclude that for $|T|\gg_{\epsilon,d}1$ and $\kappa\ll_{\epsilon,d}1$, at least \[\left(1-2\sqrt{2\epsilon}\right)^k|T|^k\left(1-\sqrt{2\epsilon}\right)|T^{3k}|>(1-2\sqrt{2\epsilon})^{k+1}|T^{4k}|\] elements of $W$ satisfy \ref{item:iv}.
		
		Despite having shown that the number of elements of $W\subseteq T^4$ satisfying \ref{item:iv} may be assumed to be at least $(1-2\sqrt{2\epsilon})^{k+1}|T^{4k}|$ provided $|T|\gg_{\epsilon,d}1$ and $\kappa\ll_{\epsilon,d}1$, the rest of the proof only makes use of \emph{one} such element to ultimately derive a contradiction of Conjecture \ref{conj:abcd} if $h_{\textup{crit}}(f)$ is sufficiently large. Let $\vec{x}=(a_1,b_1,c_1,d_1,\dots,a_k,b_k,c_k,d_k)$ be an element of $W$ satisfying \ref{item:iv}, and let $x_i$ be as in (\ref{eqn:x_i}) for all $1\le i\le k$. The Pl\"ucker identity reads as \begin{equation*} m_i\frac{(a_i-c_i)(d_i-b_i)}{(c_i-d_i)(b_i-a_i)}-m_i=m_i\frac{(d_i-a_i)(c_i-b_i)}{(d_i-c_i)(b_i-a_i)}\end{equation*} for all $1\le i\le k$. We have that for $S_0$ the union of the set of places of good reduction for $f$ and $\mathscr{S}_d\cup M_K^\infty$, \[A_{i,1}:=\frac{x_i}{m_i}=\frac{(a_i-c_i)(d_i-b_i)}{(c_i-d_i)(b_i-a_i)}\hspace{3mm}\textup{ and }\hspace{3mm}A_{i,2}:=\frac{m_i-x_i}{m_i}=\frac{(d_i-a_i)(c_i-b_i)}{(d_i-c_i)(b_i-a_i)}\] satisfy \begin{equation*}(d-1)\sum_{v\in S_0}r_v|\log |A_{i,j}|_v|\le\frac{\epsilon}{2}h_{\textup{crit}}(f)\text{ for }j=1,2 \textup{ and }1\le i\le k,\end{equation*} since each of the pairwise differences appearing in parentheses has been assumed to be $\frac{\epsilon}{8}$-adelically good. (Note that the role of the lead coefficient appearing in the definition of $\epsilon$-goodness cancels in the numerator and denominator of each $A_{i,j}$.) Therefore for $h_{\textup{crit}}(f)\gg_{\epsilon,m_i=m_i(d)}1$, we have that \begin{equation}\label{eqn:actualgoodness}(d-1)\sum_{v\in S_0}r_v|\log|m_iA_{i,j}|_v|\le\epsilon h_{\textup{crit}}(f)\end{equation} for $j=1,2$ and for all $1\le i\le k$. We assume in what follows that $h_{\textup{crit}}(f)$ is sufficiently large for this to hold; as will be reiterated at the end of the proof, the case where $h_{\textup{crit}}(f)$ is bounded is addressed by Proposition \ref{prop:Mxi}.  
		
		Let \[P=\left(x_1,m_1-x_1,\dots,x_k,m_k-x_k,-\sum_{j=1}^km_j\right)\in\mathbb{P}^{2k}(K),\] and let $S_{\vec{x}}$ be a $(1-\epsilon)$-slice of bad places of $M_K^0\setminus\mathscr{S}_d$ as in (\ref{eqn:Sslice}). Note that \[x_1+(m_1-x_1)+x_2+(m_2-x_2)+\dots+x_k+(m_k-x_k)=\sum_{j=1}^km_j,\] so $P\in\mathcal{H}$ where \[\mathcal{H}:Z_1+\dots+Z_{2k+1}=0\] in $\mathbb{P}^{2k}$. Write \[\eta_v(P)=\log\max\left\{|x_1|_v,|m_1-x_1|_v,\dots,|x_k|_v,|m_k-x_k|_v,\left|-\sum_{j=1}^km_j\right|_v\right\},\] and for $I(P)$ defined as in (\ref{eqn:I}), write \[\textup{rad}_v(P)=\begin{cases} N_v & \textup{if }v\in I(P)\\ 0&\textup{ otherwise.}\end{cases}\] Let $S_{\vec{x}}'$ be the set of places $v\in M_K$ such that at least one of the following mutually exclusive conditions holds: \begin{enumerate}[topsep=5pt]\item Either $|M|_v\ne 1$ for $M:=\sum_{j=1}^km_j$ or $|m_j|_v\ne 1$ for some $1\le j\le k$ \item $v\in M_K^0\setminus\mathscr{S}_d$ is a place of good reduction such that the $v$-adic absolute value of some $\frac{x_1}{m_1},\frac{m_1-x_1}{m_1},\dots,\frac{x_k}{m_k},\frac{m_k-x_k}{m_k}$ is not equal to $1$, and $v$ does not satisfy (i) \item $v\in M_K^\infty\cup\mathscr{S}_d$, and $v$ does not satisfy (i)\item $v\in M_K^0\setminus\mathscr{S}_d$ is a place of bad reduction not in $S_{\vec{x}}$, and $v$ does not satisfy (i).\end{enumerate} Write $S_1$ for the set of places of $S_{\vec{x}}'$ satisfying (i), $S_2$ for the set of places of $S_{\vec{x}}'$ satisfying (ii), and so on. We remark that \begin{equation}\label{eqn:boringplaces}\eta_v(P)=\textup{rad}_v(P)=0 \textup{ for all }v\notin S_{\vec{x}}\cup S_{\vec{x}}'.\end{equation} In what follows, let $N_v=0$ for all $v\in M_K^\infty$. 
		
		For $v\in M_K^0\setminus\mathscr{S}_d$ a place of bad reduction, $\tilde{f}\in L_w[z]$ a monic conjugate of $f$ fixing $0$ (defined over the extension $L_w/K_v$ generated by some fixed point of $f$), $\alpha_i$ the $i$-th degree coefficient of $\tilde{f}$, and $\pi_w\in L_w, \pi_v\in K_v$ uniformizers at $w\mid v$ and $v$ respectively, we have \cite[Lemma 2.1]{Ingram:PCF} \begin{equation}\label{eqn:unif}\begin{split}\lambda_{\textup{crit},v}(f)=\lambda_{\textup{crit},w}(f)=\log\max_{1\le i\le d-1}\{|\alpha_i|_w^{1/(d-i)}\}&\ge \frac{1}{d-1}\log|\pi_w^{-1}|_w\\&\ge\frac{1}{d(d-1)}\log|\pi_v^{-1}|_v.\end{split}\end{equation}

		As $S_{\vec{x}}$ forms a $(1-\epsilon)$-slice of places of $v\in M_K^0\setminus\mathscr{S}_d$, it then follows that \begin{equation}\begin{split}\label{eqn:S4} \sum_{v\in S_4}N_v=\sum_{v\in S_4}r_v\log|\pi_v^{-1}|_v&\le d(d-1)\sum_{v\in S_4}r_v\lambda_{\textup{crit},v}(f)\\&\le d(d-1)\epsilon \sum_{v\in M_K^0\setminus\mathscr{S}_d}r_v\lambda_{\textup{crit},v}(f)\\&\le d(d-1)\epsilon h_{\textup{crit}}(f).\end{split}\end{equation} Therefore 
		\begin{equation}\label{eqn:S1S4}\begin{split}
				\sum_{v\in S_1\cup S_4} r_v\eta_v(P)-(1+\epsilon)\textup{rad}_v(P)\geq& \sum_{v\in S_1\cup S_4} r_v \log |M|_v - (1+\epsilon)N_v
				\\\geq&\sum_{v\in S_1\cap M_K^0} \big( r_v \log |M|_v - (1+\epsilon)r_v\log|\pi_v^{-1}|_v\big) \\&- \sum_{v\in S_4} (1+\epsilon)N_v \\\geq & -\log M - (1+\epsilon)\bigg( \log M + \sum_{j=1}^k \log m_j \bigg) \\&-(1+\epsilon)d(d-1)\epsilon h_{\textup{crit}}(f)
				\\>&-5M-2d(d-1)\epsilon h_{\textup{crit}}(f) .
		\end{split}\end{equation} Here, the second inequality is because 
		$|M|_v= 1$ for $v\not\in S_1$ (hence $v\in S_4$), because $N_v= 0$ for all $v\not\in M_K^0$,
		and because $N_v = r_v \log|\pi_v^{-1}|_v$ for all $v\in M_K^0$.
		The third is by (\ref{eqn:S4}) for the sum over $S_4$, and by the definition of $S_1$ for the rest.
		The fourth is by the fact that $\sum_{j=1}^k \log m_j < \sum_{j=1}^k m_j = M$ and $\epsilon<1$.

		Our analysis of $S_2\cup S_3$, on the other hand, makes key use of the notion of $\epsilon$-adelic goodness; (\ref{eqn:actualgoodness}) along with the fact that $|M|_v=|m_i|_v=1$ for all $1\le i\le k$ and all $v\in S_2\cup S_3$ implies that \begin{equation*}\begin{split}\sum_{v\in S_2\cup S_3}r_v|\eta_v(P)|&\le\sum_{v\in S_2\cup S_3}r_v\log\max\{1,|x_1|_v,|m_1-x_1|_v,\dots,|x_k|,|m_k-x_k|_v\}\\&\le \sum_{v\in S_2\cup S_3}r_v\left(\sum_{i=1}^k\log\max\{1,|x_i|_v\}+\sum_{i=1}^k\log\max\{1,|m_i-x_i|_v\}\right)\\&\le\sum_{v\in S_2\cup S_3}r_v\left(\sum_{i=1}^k|\log|x_i|_v|+\sum_{i=1}^k|\log|m_i-x_i|_v|\right)\\&\le\frac{2k}{d-1}\epsilon h_{\textup{crit}}(f),\end{split}\end{equation*} and by the $K$-rationality of $\frac{x_1}{m_1},\frac{m_1-x_1}{m_1},\dots,\frac{x_k}{m_k},\frac{m_k-x_k}{m_k}$, we further have \[\sum_{v\in S_2\cup S_3}N_v=\sum_{v\in (S_2\cup S_3)\setminus M_K^\infty}N_v+\sum_{v\in M_K^\infty}N_v\le\sum_{v\in (S_2\cup S_3)\setminus M_K^\infty}r_v|\eta_v(P)|_v\le\frac{2k}{d-1}\epsilon h_{\textup{crit}}(f).\] We obtain from this that \begin{equation}\label{eqn:S2S3}\begin{split} \sum_{v\in S_2\cup S_3}r_v\eta_v(P)-(1+\epsilon)\textup{rad}_v(P)&\ge-\frac{1}{d-1}\left(2k\epsilon+(1+\epsilon)2k\epsilon\right)h_{\textup{crit}}(f)\\&\ge-6k\epsilon h_{\textup{crit}}(f),\end{split}\end{equation} again using $\epsilon<1$. Summing (\ref{eqn:S1S4}) and (\ref{eqn:S2S3}) yields \begin{equation}\label{eqn:Sprimecontribution}\begin{split} \sum_{v\in S_{\vec{x}}'}r_v\eta_v(P)-(1+\epsilon)\textup{rad}_v(P)=&\sum_{v\in S_1\cup S_4}r_v\eta_v(P)-(1+\epsilon)\textup{rad}_v(P)\\&+\sum_{v\in S_2\cup S_3}r_v\eta_v(P)-(1+\epsilon)\textup{rad}_v(P)\\>&-5M-\left(2d^2-2d+6k\right)\epsilon h_{\textup{crit}}(f).\end{split}\end{equation} As  $S_{\vec{x}}$ is a $(1-\epsilon)$-slice of places $v\in M_K^0\setminus\mathscr{S}_d$ of bad reduction, and Proposition \ref{prop:Mxi} allows us to assume without loss that $\sum_{v\in M_K^0\setminus\mathscr{S}_d}r_v\lambda_{\textup{crit},v}(f)\ge(1-\epsilon)h_{\textup{crit}}(f)$ and $\epsilon h_{\textup{crit}}(f)\ge\sum_{v\in S_1}r_v\lambda_{\textup{crit},v}(f)$, \begin{equation*}\label{eqn:Sprime}\begin{split}\sum_{v\in S_{\vec{x}}'}r_v\lambda_{\textup{crit},v}(f)&\le h_{\textup{crit}}(f)-\sum_{v\in S_{\vec{x}}}r_v\lambda_{\textup{crit},v}(f)+\sum_{v\in S_1}r_v\lambda_{\textup{crit},v}(f)\\&\le h_{\textup{crit}}(f)-(1-\epsilon)\sum_{v\in M_K^0\setminus\mathscr{S}_d}r_v\lambda_{\textup{crit},v}(f)+\sum_{v\in S_1}r_v\lambda_{\textup{crit},v}(f)\\&\le h_{\textup{crit}}(f)-(1-\epsilon)^2h_{\textup{crit}}(f)+\sum_{v\in S_1}r_v\lambda_{\textup{crit},v}(f)\\&\le2\epsilon h_{\textup{crit}}(f)+\epsilon h_{\textup{crit}}(f)\\&=3\epsilon h_{\textup{crit}}(f).\end{split}\end{equation*} By (\ref{eqn:finalquotientbd}), we thus have \begin{equation}\begin{split}\label{eqn:SnotSprime}\sum_{v\in S_{\vec{x}}\setminus S_{\vec{x}}'}r_v\eta_v(P)-(1+\epsilon)\textup{rad}_v(P)\ge&\sum_{v\in S_{\vec{x}}\setminus S_{\vec{x}}'}r_v\lambda_{\textup{crit},v}(f)-\sum_{v\in S_{\vec{x}}\setminus S_{\vec{x}}'}\epsilon N_v\\ \ge&\sum_{v\in S_{\vec{x}}}r_v\lambda_{\textup{crit},v}(f)-\sum_{v\in S_{\vec{x}}'}r_v\lambda_{\textup{crit},v}(f)-\sum_{v\in S_{\vec{x}}\setminus S_{\vec{x}}'}\epsilon N_v\\ \ge&(1-\epsilon)^2h_{\textup{crit}}(f)-3\epsilon h_{\textup{crit}}(f)\\&-\sum_{v\in S_{\vec{x}}\setminus S_{\vec{x}}'}\epsilon d(d-1)\lambda_{\textup{crit},v}(f)\\ \ge&(1-(5+d^2-d)\epsilon)h_{\textup{crit}}(f),\end{split}\end{equation} where the third inequality again uses (\ref{eqn:Nvlambda}). Finally, by (\ref{eqn:boringplaces}), we clearly have  \begin{equation}\label{eqn:leftover}\begin{split}\sum_{v\notin S_{\vec{x}}\cup S_{\vec{x}}'}r_v\eta_v(P)-(1+\epsilon)\textup{rad}_v(P)=0.\end{split}\end{equation} 
		
		Summing (\ref{eqn:Sprimecontribution}), (\ref{eqn:SnotSprime}), and (\ref{eqn:leftover}) gives \begin{equation}\label{eqn:lastline}\begin{split}\sum_{v\in M_K}r_v\eta_v(P)-(1+\epsilon)\textup{rad}_v(P)=&\sum_{v\in S_{\vec{x}}'}r_v\eta_v(P)-(1+\epsilon)\textup{rad}_v(P)\\&+\sum_{v\in S_{\vec{x}}\setminus S_{\vec{x}}'}r_v\eta_v(P)-(1+\epsilon)\textup{rad}_v(P)\\&+\sum_{v\notin S_{\vec{x}}\cup S_{\vec{x}}'}r_v\eta_v(P)-(1+\epsilon)\textup{rad}_v(P)\\>&-5M-\left(2d^2-2d+6k\right)\epsilon h_{\textup{crit}}(f)\\&+(1-(5+d^2-d)\epsilon)h_{\textup{crit}}(f)\\=&-5M+(1-(6k+3d^2-3d+5)\epsilon) h_{\textup{crit}}(f).\end{split}\end{equation} In other words, \begin{equation*}\label{eqn:tada}\begin{split} h(P)-(1+\epsilon)\text{rad}(P)&> -5M+\left(1-\left(6k+3d^2-3d+5\right)\epsilon\right)h_{\textup{crit}}(f)\\&=-5M+\left(1-\left(6\left\lceil \frac{\log(\epsilon^2/2)}{C(d)} \right\rceil+3d^2-3d+5\right)\epsilon\right)h_{\textup{crit}}(f).\end{split}\end{equation*} As $\lim_{\epsilon\to0}\left\lceil \frac{\log(\epsilon^2/2)}{C(d)} \right\rceil\cdot\epsilon=0$, we obtain a contradiction of Conjecture \ref{conj:abcd} whenever $\epsilon\ll_{d}1$ and $h_{\textup{crit}}(f)\ge M'$ for some $M'=M'(\epsilon,M)$. Finally, if $h_{\textup{crit}}(f)<M'$, then as $f$ has been assumed to be non-isotrivial if $K$ is a function field, we may apply Proposition \ref{prop:Mxi} to obtain the desired result in these cases.\end{proof}

	\begin{proof}[Proof of Theorems \ref{thm:UBCpolys} and \ref{thm:dynLangpolys}] Theorem \ref{thm:UBCpolys} is immediate from Theorem \ref{thm:combined}, as preperiodic points all have canonical height $0$. Now let all notation be as in Theorem \ref{thm:combined}, except write $\kappa'$ in lieu of $\kappa$. Suppose first that either $K$ is a number field or that $K$ is a function field and $f$ is not isotrivial. Then letting $\kappa=d^{-B}\kappa'$, the fact that $\hat{h}_f(f(P))=d\hat{h}_f(P)$ for all $P\in\overline{K}$ implies together with Theorem \ref{thm:combined} that each $P\in K$ with $\hat{h}_f(P)\ne0$ satisfies $\hat{h}_f(P)\ge\kappa\max\{1,h_{\textup{crit}}(f)\}$, proving Theorem \ref{thm:dynLangpolys} in this case. Now suppose that $K$ is a function field and that $f$ is isotrivial. Then Lemma \ref{lem:isotrivLang} at once yields Theorem \ref{thm:dynLangpolys}, noting that in this case we have $h_{\textup{crit}}(f)=0$ and hence $\max\{1,h_{\textup{crit}}(f)\}=1$. \end{proof}

\end{document}